\setlist[enumerate]{leftmargin=.5in}
\setlist[itemize]{leftmargin=.5in}
\crefname{hypothesis}{Hypothesis}{Hypotheses}
\title{ Dynamic Systems Coupled with Solutions of Stochastic Nonsmooth Convex Optimization \thanks{1 September 2023, revisions 9 March 2024, 14 August 2024, 24 January 2025,
\funding{This work is supported by CAS-Croucher Funding Scheme for  the CAS AMSS-PolyU Joint Laboratory in Applied Mathematics and Hong Kong Research Grant Council project PolyU15300022.}}}
\author{Jianfeng Luo\thanks{School of Mathematics, North University of China, Taiyuan, China;
  CAS AMSS-PolyU Joint Laboratory of Applied Mathematics (Shenzhen), The Hong Kong Polytechnic University Shenzhen Research Institute, Shenzhen, China
  (\email{luojf2024@163.com}).}
\and Xiaojun Chen\thanks{Department of Applied Mathematics, The Hong Kong Polytechnic University, Kowloon, Hong Kong (\email{maxjchen@polyu.edu.hk}).}}
\begin{document}

\maketitle

% REQUIRED
\begin{abstract}
In this paper, we study ordinary differential equations (ODE) coupled with solutions of a stochastic nonsmooth convex optimization problem (SNCOP). We use the regularization approach, the sample average approximation and the time-stepping method to construct discrete approximation problems. We show the existence of solutions to the original problem and the discrete problems. Moreover, we show that the optimal solution of the SNCOP with a strong convex objective function admits a linear growth condition and the optimal solution of the regularized SNCOP converges to the least-norm solution of the original SNCOP, which are crucial for us to derive the convergence results of the discrete problems.  We illustrate the theoretical results and applications for the estimation of the time-varying parameters in ODE by numerical examples.
\end{abstract}

\vspace{-0.05in}
% REQUIRED
\begin{keywords}
  Dynamic system, stochastic nonsmooth optimization, regularization method, sample average approximation, convergence analysis.
\end{keywords}

% REQUIRED
\vspace{-0.05in}
\begin{MSCcodes}
90C15, 90C33, 90C39
\end{MSCcodes}
\vspace{-0.05in}
\section{Introduction}
Let $\xi$ be a random variable defined in the probability space $(\Omega,\mathcal{F},\mathcal{P})$ with support set $\Xi:=\xi(\Omega)\subseteq\mathbb{R}^d$. Let $f: \mathbb{R} \times \mathbb{R}^n\times \mathbb{R}^m \times \Xi \rightarrow\mathbb{R}^n$, $g: \mathbb{R} \times \mathbb{R}^n \times \mathbb{R}^m\times \Xi\rightarrow \mathbb{R}$, $A: \Xi \rightarrow \mathbb{R}^{q\times n}$, $B:  \Xi \rightarrow \mathbb{R}^{q\times m}$ and $Q: \mathbb{R} \times \Xi\rightarrow \mathbb{R}^q$ be given mappings.
In this paper, we consider the following dynamic system coupled with solutions of stochastic nonsmooth convex optimization:
\begin{eqnarray}
& &\dot{x}(t)=\mathbb{E}[f(t,x(t),y(t),\xi)],\,\,\, x(0)=x_0, \label{OCDE-1}\\
& &\begin{aligned}
&y(t)\in\arg\min_{\mathbf{y}\in \mathbb{R}^m} \mathbb{E}[g(t,x(t),\mathbf{y},\xi)] \label{OCDE-2} &\\
&\quad\quad\quad\quad\textrm{s.t.} \,\,\,\,\mathbf{y}\in K(t,x(t)),
 \end{aligned}
\end{eqnarray}
where $x_0 \in \mathbb{R}^n$ is an initial vector,
and the set-valued function $K: \mathbb{R}_+ \times \mathbb{R}^n \rightrightarrows \mathbb{R}^m$ is defined as below
\[K(t,x(t))\triangleq\{\mathbf{y}\in \mathbb{R}^m: \mathbb{E}[A(\xi)]x(t)+\mathbb{E}[B(\xi)]\mathbf{y}+\mathbb{E}[Q(t,\xi)]\leq 0\}.\]
We assume that all expected values in problem \eqref{OCDE-1}-\eqref{OCDE-2} are well defined.

Let $\|\cdot\|$  denote the Euclidean norm of a vector and a matrix. We suppose that there exists a measurable function $\kappa_f:\Xi\rightarrow\mathbb{R}_+$ with $\mathbb{E}[\kappa_f(\xi)]<\infty$ such that
 for any $t_1$, $t_2\in \mathbb{R}_+$, $u_1$, $u_2\in \mathbb{R}^n$, $v_1$, $v_2\in \mathbb{R}^m$ and almost everywhere (a.e.) $\xi\in\Xi$,
\begin{eqnarray}\label{Lipschitz}
\begin{aligned}
&\,\,\,\,\,\,\|f(t_1,u_1,v_1,\xi)-f(t_2,u_2,v_2,\xi)\|\leq \kappa_f(\xi)(|t_1-t_2|+\|u_1-u_2\|+\|v_1-v_2\|).&\\
\end{aligned}
\end{eqnarray}
We also assume that $g(t,x(t),\cdot,\xi)$ is convex for any $t\in \mathbb{R}_+$, $x(t)\in \mathbb{R}^n$ and a.e. $\xi\in\Xi$, the functions $g(\cdot,\cdot,\cdot,\xi)$ and $Q(\cdot,\xi)$ are both continuous for a.e. $\xi\in\Xi$, and there exists a measurable function $\kappa_Q:\Xi\rightarrow\mathbb{R}_+$ with $\mathbb{E}[\kappa_Q(\xi)]<\infty$ such that
  $\|Q(t,\xi)\|\leq \kappa_Q(\xi)t$ for any $t\in \mathbb{R}_+$ and a.e. $\xi\in\Xi$. Assume that $g(\cdot,\cdot,\cdot,\xi)$ is dominated by an integrable function for a.e. $\xi\in\Xi$.
Then we know $\mathbb{E}[f(t,x(t),y(t),\xi)]$ is Lipschitz continuous and $\mathbb{E}[g(t,x(t),y(t),\xi)]$ is continuous with respect to (w.r.t.) $(t,x(t),y(t))$.

The optimization problem \eqref{OCDE-2} is a stochastic convex program for any fixed $t\in\mathbb{R}_+$ and $x(t)\in \mathbb{R}^n$, since the objective function $\mathbb{E}[g(t,x(t),\cdot,\xi)]$ is convex and the feasible set $K(t,x(t))$ is a convex set. The objective function $\mathbb{E}[g(t,x(t),\cdot,\xi)]$ is not necessarily differentiable and the solution set of \eqref{OCDE-2} may have multiple elements.  Problem \eqref{OCDE-1}-\eqref{OCDE-2} can be equivalently written as the following dynamic generalized stochastic variational inequality:
\begin{eqnarray}\label{ocde-gvi}
\left\{
\begin{aligned}
& \dot{x}(t)=\mathbb{E}[f(t,x(t),y(t),\xi)],\,\,\,\,x(0)=x_0, \\
& 0\in \partial_{y(t)}\mathbb{E}[g(t,x(t),y(t),\xi)] +\mathcal{N}_{K(t,x(t))}(y(t)),
\end{aligned}
\right.
\end{eqnarray}
where $\partial_{y(t)}\mathbb{E}[g(t,x(t),y(t),\xi)]$ is the subdifferential of $\mathbb{E}[g(t,x(t),y(t),\xi)]$ at the point $y(t)$ and $\mathcal{N}_{K(t,x(t))}(y(t))$ denotes the normal cone of $K(t,x(t))$ at $y(t)$ \cite{Rockafellar}.
When the function $\mathbb{E}[g(t,x(t),\cdot,\xi)]$ is continuously differentiable, we can derive a differential stochastic variational inequality (DSVI):
\begin{eqnarray}\label{ocde-vi}
\left\{
\begin{aligned}
& \dot{x}(t)=\mathbb{E}[f(t,x(t),y(t),\xi)],\,\,\,\,x(0)=x_0,  &\\
& 0\in \nabla_{y(t)}\mathbb{E}[g(t,x(t),y(t),\xi)] +\mathcal{N}_{K(t,x(t))}(y(t)).
\end{aligned}
\right.
\end{eqnarray}
It is easy to see that problem  \eqref{ocde-vi} is  a special case of \eqref{ocde-gvi} and problem \eqref{OCDE-1}-\eqref{OCDE-2}. The DSVI (\ref{ocde-vi}) includes the deterministic differential  variational inequality (DVI), which  has many important applications in engineering, economics and biology. The DVI involves dynamics, variational inequalities and equilibrium conditions, and has been studied in \cite{BB2020,chen2012,chen2013,chen2014,pangDVI,chen2018,chen2017,chen2020}.

It should be noted that, when the variational inequality (VI) or optimization problem \eqref{OCDE-2} has multiple solutions, a wrong selection of solutions may make the corresponding ODE unsolvable or numerical scheme divergent.
The authors in \cite{JSpang2009} proposed to use the least-norm solution of the VI to ensure the convergence of time-stepping method for a special class of monotone DVI, which  inspires our regularization approach for problem \eqref{OCDE-1}-\eqref{OCDE-2} in this paper.

The dynamic systems coupled with solutions of an optimization problem have a wide applications in many fields such as atmospheric chemistry \cite{LCH2010,LCH2009} and dynamic flux balance analysis in biological systems \cite{zhaoxiao2017}. They have been extended to stochastic case in \cite{PCM2019}, where the authors investigated a dynamic flux balance analysis model with uncertainty. As mentioned in \cite{LCH2009}, the deterministic dynamic systems coupled with solutions of a convex optimization problem can be seen as an ODE-constrained optimization problem,  which is proposed to estimate parameters for the ODE in \cite{chung-pop-2017}.  In \cite{LCH2009}, the authors proposed a numerical method for the differential equations coupled with a smooth nonconvex optimization problem and  applied the Karush-Kuhn-Tucker conditions to reformulate the problem as the DVI. However, as we mentioned before, when the objective function is nonsmooth, we cannot transform problem \eqref{OCDE-1}-\eqref{OCDE-2} as the DVI and apply the existing methods and results. Therefore, we present the existence of solutions, numerical methods, convergence analysis and applications of \eqref{OCDE-1}-\eqref{OCDE-2} in this paper.

The main contributions of this paper are twofold. (i)
We give sufficient conditions for the existence of a solution $(x,y)$ of problem \eqref{OCDE-1}-\eqref{OCDE-2} on $[0,T]$,  where $x$ is absolutely continuous and $y$ is integrable. In addition, if the objective function in
\eqref{OCDE-2} is strongly convex, then problem \eqref{OCDE-1}-\eqref{OCDE-2} has a solution $(x,y)$ over $[0,\tilde{T}]$ with $x$ being continuously differentiable,  and $y$ being continuous for a positive number $\tilde{T}$ and admitting  a linear growth condition.  (ii)
We propose a regularization method to approximate the objective function in \eqref{OCDE-2} by a strongly convex function and show the unique optimal solution of the regularized optimization problem converges to the least-norm optimal solution of \eqref{OCDE-2} when the regularization parameter goes to zero.
Moreover, we prove the existence of solutions to the discrete regularization problem using the sample average approximation (SAA) and the implicit Euler time-stepping scheme.  We show the solution of the approximation problem constructed by the regularization
approach, SAA and time-stepping method converges to a solution of \eqref{OCDE-1}-\eqref{OCDE-2} with probability 1 (w.p.1) by the repeated limits in the order of the regularization parameter goes to zero, the SAA sample size goes to infinity and the time-stepping step size goes to zero.

The paper is organised as follows: Section \ref{se:Existence} deals with the existence of solutions of problem \eqref{OCDE-1}-\eqref{OCDE-2}. Section \ref{se:regularization} studies the existence of solutions of the regularized problem of \eqref{OCDE-1}-\eqref{OCDE-2} and the convergence to the original problem as the regularization parameter approaches to zero. In Section \ref{se:SAA}, we present the existence of solutions of the SAA of \eqref{OCDE-1}-\eqref{OCDE-2} and the convergence analysis.
In Section \ref{se:time-stepping}, we study the convergence of the time-stepping scheme and show the convergence properties of the discrete method using the SAA and the implicit Euler time-stepping scheme. Section \ref{se:numerical-exam} gives a numerical example to illustrate the theoretical results obtained in this paper. And Section \ref{se:application} shows the application of the estimation of the time-varying parameters in ODE. Some final conclusion remarks are presented in Section \ref{se:conclusions}.

\subsection{Notation}

Denote by $\mathcal{B}(v,r)$  the open ball centered by $v\in \mathbb{R}^n$ with the radius of $r$ in the Euclidean norm.
For sets ${S}_1,S_2\subseteq \mathbb{R}^n$, we denote the distance from $v\in \mathbb{R}^n$ to $S_1$ and the deviation of the set $S_1$ from the set $S_2$ by
$\textrm{dist}(v,S_1)=\inf_{v'\in S_1}\|v-v'\|,$
 and $\mathbb{D}(S_1,S_2)=\sup_{v\in S_1}\textrm{dist}(v,S_2)$, respectively. We also define the Hausdorff distance between the set $S_1$ and the set $S_2$ by $\mathbb{H}(S_1,S_2)=\max\{\mathbb{D}(S_1,S_2),\mathbb{D}(S_2,S_1)\}$. We define ${S}_1+{S}_2=\{z_1+z_2: z_1\in {S}_1, z_2\in {S}_2\}$. For a set ${S}$, $\textrm{int}{S}$ denotes the interior of $S$ and $\tau {S}=\{\tau z: z\in {S}\}$ with a scalar $\tau$.
 Let $C^1([a,b])$ and $C^0([a,b])$ be the spaces of continuously
differentiable vector-valued functions and continuous vector-valued functions on
$[a, b]$, respectively.

\section{Existence of solutions}\label{se:Existence}

In this section, we show the existence of solutions to problem \eqref{OCDE-1}-\eqref{OCDE-2}.
\begin{definition}\cite{DI,yasushi2015}
\begin{enumerate}
  \item [(i)] (lower semicontinuity). A set-valued mapping ${\cal S}: \mathbb{R}^{n_1}\rightrightarrows \mathbb{R}^{m_1}$ is lower semicontinuous at $\bar{z}\in \mathbb{R}^{n_1}$ if for any open set $\mathcal{B}_{\cal S}$ with $\mathcal{B}_{\cal S}\cap {\cal S}(\bar{z})\neq \emptyset$, there exists $\sigma>0$ such that ${\cal S}(z)\cap \mathcal{B}_{\cal S} \neq \emptyset$ for any $z\in \mathcal{B}(\bar{z},\sigma)$.
  \item [(ii)] (upper semicontinuity). A set-valued mapping ${\cal S}: \mathbb{R}^{n_1}\rightrightarrows \mathbb{R}^{m_1}$ is upper semicontinuous at $\bar{z}\in \mathbb{R}^{n_1}$ if for any open set $\mathcal{B}_{\cal S}$ with ${\cal S}(\bar{z})\subseteq \mathcal{B}_{\cal S}$, there exists $\sigma>0$ such that ${\cal S}(z)\subseteq \mathcal{B}_{\cal S}$ for any $z\in \mathcal{B}(\bar{z},\sigma)$.
      \end{enumerate}
\end{definition}

A set-valued mapping ${\cal S}$ is said to be continuous if and only if it is both upper and lower semicontinuous. Obviously, upper (lower) semicontinuity is nothing else than continuity if ${\cal S}$ is single-valued.

Let $\mathcal{S}(t,x(t))$ denote the optimal solution set of \eqref{OCDE-2} for fixed $t\in \mathbb{R}_+$ and $x(t)\in \mathbb{R}^n$. For some $T>0$, if there exists $(x,y)\in C^1([0,T]) \times C^0([0,T])$ fulfilling problem \eqref{OCDE-1}-\eqref{OCDE-2}, we call $(x,y)$ a classic solution of problem \eqref{OCDE-1}-\eqref{OCDE-2} on $[0,T]$.
We call $(x,y)$ a weak solution of problem \eqref{OCDE-1}-\eqref{OCDE-2} over $[0,T]$ if $x$ is absolutely continuous and $y$ is integrable over $[0,T]$ with $y(t)\in \mathcal{S}(t,x(t))$ and
\[x(t)=x_0+\int_{0}^t \mathbb{E}[f(\tau,x(\tau),y(\tau),\xi)]d\tau.\]

 We first show the existence of solutions of problem \eqref{OCDE-1}-\eqref{OCDE-2} under the following assumption.
\begin{assumption}\label{assumption-interior-point} The set $\textrm{int} K(0,x_0)$ is not empty, the function $\mathbb{E}[g(0,x_0,\cdot,\xi)]$ is level-bounded over $K(0,x_0)$ (i.e. all sets $\{\mathbf{y}\in K(0,x_0): \mathbb{E}[g(0,x_0,\mathbf{y},\xi)]\leq \alpha \}$ for $\alpha\in \mathbb{R}$ are bounded) and the function $f(t,\mathbf{x},\cdot,\xi)$ is affine for any $t\in \mathbb{R}$, $\mathbf{x}\in \mathbb{R}^n$ and a.e. $\xi\in\Xi$.
\end{assumption}

\begin{theorem}\label{existence-weak}
Suppose that Assumption \ref{assumption-interior-point} holds. Then there exists $T_0>0$ such that problem \eqref{OCDE-1}-\eqref{OCDE-2} has at least a weak solution $(x^*,y^*)$ on $[0,T]$ for any $T\leq T_0$.
\end{theorem}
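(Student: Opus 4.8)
The plan is to recast problem \eqref{OCDE-1}-\eqref{OCDE-2} as a differential inclusion $\dot{x}(t)\in F(t,x(t))$ where $F(t,x):=\{\mathbb{E}[f(t,x,v,\xi)]:v\in\mathcal{S}(t,x)\}$, and then apply a standard existence theorem for upper semicontinuous differential inclusions (a Filippov/Kakutani-type argument). The key structural facts I would need to establish about $F$ are: (a) for each $(t,x)$ near $(0,x_0)$ the optimal solution set $\mathcal{S}(t,x)$ is nonempty, convex and compact; (b) $F$ has nonempty convex compact values and is upper semicontinuous; (c) $F$ has at most linear growth (indeed local boundedness suffices on a short time interval). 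Given these, the existence of an absolutely continuous $x$ on some $[0,T_0]$ with $\dot x(t)\in F(t,x(t))$ a.e.\ follows from the classical theory, and then a measurable selection $y(t)\in\mathcal{S}(t,x(t))$ with $\dot x(t)=\mathbb{E}[f(t,x(t),y(t),\xi)]$ a.e.\ gives the weak solution; measurability of $y$ combined with the linear bound yields integrability.

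The first real step is to control the feasible set map $K$. Since $\mathrm{int}\,K(0,x_0)\neq\emptyset$, pick $\bar y$ and $\rho>0$ with $\mathcal{B}(\bar y,\rho)\subseteq K(0,x_0)$; using continuity of $\mathbb{E}[Q(t,\xi)]$ in $t$ (from continuity of $Q(\cdot,\xi)$ and the dominating bound $\|Q(t,\xi)\|\le\kappa_Q(\xi)t$) and continuity of $\mathbb{E}[A(\xi)]x$ in $x$, I would show that for $(t,x)$ in a neighborhood $[0,T_1]\times\mathcal{B}(x_0,r_1)$ a Slater point persists, so $K(t,x)\neq\emptyset$ with nonempty interior; standard results on polyhedral (linear-inequality) systems then give that $K(\cdot,\cdot)$ is continuous (both u.s.c.\ and l.s.c.) on this neighborhood, e.g.\ via Hoffman's error bound or Robinson's stability theorem for the Slater-type perturbation. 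Next, the level-boundedness of $\mathbb{E}[g(0,x_0,\cdot,\xi)]$ over $K(0,x_0)$, together with continuity of $\mathbb{E}[g(\cdot,\cdot,\cdot,\xi)]$ (from the stated domination and continuity hypotheses) and continuity of $K$, should upgrade to a uniform level-boundedness / inf-compactness of $\mathbb{E}[g(t,x,\cdot,\xi)]$ over $K(t,x)$ for $(t,x)$ in a possibly smaller neighborhood $[0,T_0]\times\mathcal{B}(x_0,r_0)$; this is where I expect to invoke a Berge-type maximum theorem to conclude that $\mathcal{S}(t,x)$ is nonempty, compact, and (by convexity of $g$ in its third argument and convexity of $K$) convex, and that $\mathcal{S}$ is u.s.c.\ with a uniformly bounded range on that neighborhood. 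The affineness of $f(t,x,\cdot,\xi)$ plus the Lipschitz bound \eqref{Lipschitz} then make $v\mapsto\mathbb{E}[f(t,x,v,\xi)]$ continuous and affine, so $F(t,x)$ inherits convexity, compactness, u.s.c.\ and boundedness; in particular $\|F(t,x)\|$ is bounded by some constant $M$ on $[0,T_0]\times\mathcal{B}(x_0,r_0)$, and shrinking $T_0$ so that $MT_0\le r_0$ keeps the trajectory inside the ball.

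With $F$ established as a bounded u.s.c.\ map with nonempty convex compact values, I would quote the standard existence theorem for differential inclusions (e.g.\ Aubin--Cellina, or Deimling) to obtain an absolutely continuous $x:[0,T_0]\to\mathcal{B}(x_0,r_0)$ solving $\dot x(t)\in F(t,x(t))$ a.e.\ with $x(0)=x_0$. Then, for a.e.\ $t$, the set $\{v\in\mathcal{S}(t,x(t)):\mathbb{E}[f(t,x(t),v,\xi)]=\dot x(t)\}$ is nonempty and, by measurability of $t\mapsto x(t)$ and $t\mapsto\dot x(t)$ together with the (Carathéodory-type) measurability of the data, admits a measurable selection $y(\cdot)$ by the Kuratowski--Ryll-Nardzewski selection theorem; this $y$ is bounded, hence integrable, and $(x^*,y^*)=(x,y)$ is the desired weak solution on any $[0,T]$ with $T\le T_0$. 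The main obstacle is the second step: proving that the Slater condition, level-boundedness, and hence nonemptiness/compactness of $\mathcal{S}(t,x)$ all propagate uniformly to a full neighborhood of $(0,x_0)$ and that $\mathcal{S}$ (and therefore $F$) is upper semicontinuous there — this requires carefully combining stability of polyhedral feasible-set maps under perturbation with the continuity of the expectation-valued objective, and handling the possibility that $\mathcal{S}$ is genuinely set-valued because $\mathbb{E}[g(t,x,\cdot,\xi)]$ need not be strictly convex.
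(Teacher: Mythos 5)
Your proposal is correct and follows essentially the same route as the paper's proof: both recast \eqref{OCDE-1}-\eqref{OCDE-2} as the differential inclusion $\dot{x}(t)\in\{\mathbb{E}[f(t,x(t),\mathbf{y},\xi)]:\mathbf{y}\in\mathcal{S}(t,x(t))\}$, establish that $\mathcal{S}$ is nonempty, convex, compact-valued, upper semicontinuous and uniformly bounded near $(0,x_0)$ from the continuity of the polyhedral map $K$ and the level-boundedness in Assumption \ref{assumption-interior-point}, use the affineness of $f$ in $\mathbf{y}$ to get convex values, invoke a standard existence theorem for upper semicontinuous convex compact-valued inclusions, and finish with a measurable (Filippov-type) selection to produce an integrable $y^*$. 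The only differences are cosmetic choices of citations (Slater-point persistence plus Hoffman/Robinson and Berge's maximum theorem versus the paper's graph-convexity argument via Rockafellar and the parametric-program stability result), which deliver the same intermediate facts.
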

\begin{proof}
Let
$K_1(\mathbf{x},\mathbf{q})=\{\mathbf{y}\in \mathbb{R}^m: \mathbb{E}[A(\xi)]\mathbf{x}+\mathbb{E}[B(\xi)]\mathbf{y}+\mathbf{q}\leq 0\}$  for   $\mathbf{x}\in \mathbb{R}^n$ and $\mathbf{q}\in \mathbb{R}^q$. Denote $\mathbf{q}_0=\mathbb{E}[Q(0,\xi)]$. It is easy to see $K_1(x_0,\mathbf{q}_0)=K(0,x_0)$.
Since $\textrm{int} K_1(x_0,\mathbf{q}_0)\neq\emptyset$, there are $\breve{\sigma}>0$ and $\breve{\delta}>0$ such that $\textrm{int}K_1(\mathbf{x},\mathbf{q})\neq\emptyset$ for any $(\mathbf{x},\mathbf{q})\in \mathcal{B}(x_0,\breve{\sigma})\times \mathcal{B}(\mathbf{q}_0,\breve{\delta})$.
By the continuity of $\mathbb{E}[Q(\cdot,\xi)]$, we  conclude that there are $\hat{\sigma}>0$ and $\hat{\delta}>0$ such that $\textrm{int}K(t,x(t))\neq\emptyset$ for any $(t,x(t))\in [0,\hat{\sigma}] \times \mathcal{B}(x_0,\hat{\delta})$ with $x\in C^0([0,\hat{\sigma}])$.

It is easy to verify that $K_1(\tau \mathbf{x}_1+(1-\tau) \mathbf{x}_2, \tau \mathbf{q}_1+(1-\tau) \mathbf{q}_2) \supset \tau K_1(\mathbf{x}_1,\mathbf{q}_1) +(1-\tau)K_1(\mathbf{x}_2,\mathbf{q}_2)$ for $\tau \in (0,1)$, $\mathbf{x}_1$, $\mathbf{x}_2\in \mathcal{B}(x_0,\breve{\sigma})$ and $\mathbf{q}_1$, $\mathbf{q}_2\in \mathcal{B}(\mathbf{q}_0,\breve{\delta})$, which means that $K_1$ is graph-convex. Then following from \cite[Corollary 9.34]{Rockafellar}, $K_1$ is strictly continuous at any point of $\mathcal{B}(x_0,\breve{\sigma})\times \mathcal{B}(\mathbf{q}_0,\breve{\delta})$.

Therefore, by the continuity of $\mathbb{E}[g(\cdot,\cdot,\cdot,\xi)]$ and Assumption \ref{assumption-interior-point}, there are two scalars ${\sigma}$ and ${\delta}$ with $\hat{\sigma}\geq \sigma>0$ and $\hat{\delta}\geq \delta>0$ such that $K$ is continuous over $[0,{\sigma}]\times\mathcal{B}(x_0,{\delta})$ and $\mathbb{E}[g(t,x(t),\cdot,\xi)]$ is level-bounded over $K(t,x(t))$ for any $(t,x(t))\in [0,{\sigma}]\times\mathcal{B}(x_0,{\delta})$ with $x\in C^0([0,{\sigma}])$.
According to \cite[Example 1.11]{Rockafellar},
we know that $\mathcal{S}(t,x(t))$ is nonempty and compact for any $(t,x(t))\in [0,\sigma]\times\mathcal{B}(x_0,{\delta})$ with $x\in C^0([0,{\sigma}])$. It then derives that, by  \cite[Theorem 3.1]{yasushi2015}, $\mathcal{S}$ is convex-valued and upper semicontinuous over $[0,\sigma]\times\mathcal{B}(x_0,\delta)$, which means that there exists $\rho_s>0$ (independent of $(t,x(t))$) such that $\sup\{\|\mathbf{y}\|: \mathbf{y}\in \mathcal{S}(t,x(t))\} \leq \rho_{s}$ over $[0,\sigma]\times\mathcal{B}(x_0,\delta)$.

Since $\mathcal{S}$ is convex-valued and $\mathbb{E}[f(t,x(t),\cdot,\xi)]$ is affine for $t\in\mathbb{R}$ and $x(t)\in \mathbb{R}^n$, the set-valued mapping
 \[f_{\mathcal{S}}(t,x(t))=\{\mathbb{E}[f(t,x(t),\mathbf{y},\xi)]: \mathbf{y}\in \mathcal{S}(t,x(t))\}\]
is convex-valued.
Following from the Lipschitz property of $\mathbb{E}[f(\cdot,\cdot,\cdot,\xi)]$, we know that there exists $\check{\rho}_f>0$ such that
$\|\mathbb{E}[f(t,x(t),y(t),\xi)]\|\leq \check{\rho}_f(1+|t|+\|x(t)\|+\|y(t)\|)$.
 Therefore, there exists $\rho_f>0$ such that the following linear growth condition holds for any $(t,x(t))\in [0,\sigma]\times\mathcal{B}(x_0,\delta)$
\begin{equation}\label{linear-growth}
\sup\{\|\mathbb{E}[f(t,x(t),y(t),\xi)]\|: y(t)\in \mathcal{S}(t,x(t)) \}\leq \rho_f(1+\|x(t)\|).
\end{equation}
It is obvious that $f_{\mathcal{S}}$ is closed by the compactness of $\mathcal{S}$. We then know that $f_{\mathcal{S}}$ is upper semicontinuous over $(t,x(t))\in [0,\sigma]\times\mathcal{B}(x_0,\delta)$ since it is bounded on compact sets \cite[Corollary 1 in Section 1 of Chapter 1]{DIJA}.

According to \cite[Theorem 5.1]{DI} and \cite[Lemma 6.1]{pangDVI}, we know that the following differential inclusion
\begin{equation*}
\left\{
\begin{aligned}
& \dot{x}(t)\in f_\mathcal{S}(t,x(t)), &\\
& x(0)=x_0,
\end{aligned}
\right.
\end{equation*}
has at least one absolutely continuous solution $x^*$. According to \cite[Corollary 1 in Section 14 of Chapter 1]{DIJA} and \cite[Lemma 6.3]{pangDVI}, there exists an integrable function $y^*(t)\in \mathcal{S}(t,x^*(t))$ such that
\[x^*(t)= x_0+\int_{0}^t \mathbb{E}[f(\tau,x^*(\tau),y^*(\tau),\xi)] d\tau.\]
Clearly, there exists $\sigma_{T}>0$ such that $x^*(t)\in\mathcal{B}(x_0,\delta)$ for any $t\in[0,\sigma_{T}]$. By choosing $T_0=\min\{\sigma_{T},\sigma\}$, we can conclude the result.
\end{proof}

If optimization problem \eqref{OCDE-2} has equality constraints with
$K(t,x(t))=\{\mathbf{y}\in \mathbb{R}^m: \mathbb{E}[A(\xi)]x(t)+\mathbb{E}[B(\xi)]\mathbf{y}+\mathbb{E}[Q(t,\xi)]= 0\}$,
we can replace
 ``$\textrm{int} K(0,x_0)$ is not empty'' by
``$K(0,x_0)$ is not empty''  in Assumption \ref{assumption-interior-point} and consider the relaxation set $K(t,x(t),\epsilon)\triangleq\{\mathbf{y}\in\mathbb{R}^m: \|\mathbb{E}[A(\xi)]x(t)+\mathbb{E}[B(\xi)]\mathbf{y}+\mathbb{E}[Q(t,\xi)]\|_\infty\leq\epsilon\}$, where $\epsilon\ge 0$ is a scalar.
Since $K(0,x_0)$ is not empty, we have  $\textrm{int} K(0,x_0,\epsilon)$ with $\epsilon >0$ is not empty.
  Let $\tilde{K}(\epsilon)=K(0,x_0,\epsilon)$. It is easy to see that  $\tilde{K}(\epsilon)$ is graph-convex and the graph $\tilde{K}(\epsilon)$ is polyhedral,  which implies from \cite[Example 9.35]{Rockafellar} that $\tilde{K}(\epsilon)$ is Lipschitz continuous w.r.t. $\epsilon$. Therefore, from the function $\mathbb{E}[g(0,x_0,\cdot,\xi)]$ is level-bounded over $K(0,x_0)$, we have that $\mathbb{E}[g(0,x_0,\cdot,\xi)]$ is level-bounded over $K(0,x_0,\epsilon)$ with any sufficiently small $\epsilon>0$, which means that Assumption \ref{assumption-interior-point} holds to the relaxation optimization problem with int$K(0,x_0,\epsilon)\neq \emptyset$ for any sufficiently small $\epsilon>0$.

It is obvious that $K(t,x(t),\epsilon)$ is also Lipschitz continuous w.r.t. $\epsilon$ with any given $t$ and $x(t)$. It means from \cite[Definition 9.26, Corollary 4.7]{Rockafellar} that $K(t,x(t),\epsilon) \downarrow  K(t,x(t))$ as $\epsilon\downarrow0$. Moreover, from  \cite[Proposition 7.4(f), Exercise 7.8(a)]{Rockafellar}, we know  that $\mathbb{E}[g(t,x(t),\cdot,\xi)]+I_{K(t,x(t),\epsilon)}\rightarrow^{epi} \mathbb{E}[g(t,x(t),\cdot,\xi)]+I_{K(t,x(t))}$ as $\epsilon\downarrow0$, where $I_K$ is the indicator function of set $K$. It then concludes by \cite[Theorem 7.33]{Rockafellar} that $\lim_{\epsilon\downarrow0}\mathbb{D}(\mathcal{S}^\epsilon(t,x(t)),\mathcal{S}(t,x(t)))=0$, where $\mathcal{S}(t,x(t))$ and $\mathcal{S}^\epsilon(t,x(t))$ denote the optimal solution sets of optimization problem \eqref{OCDE-2} with equality constraints and its relaxation optimization problem with  $\epsilon>0$, respectively.
Hence by using this relaxation method, the results of this paper are also applicable without assume that int$K(t,x(t))\neq \emptyset$.

%Let
%\begin{eqnarray*}
%\begin{aligned}
%h_\alpha^\epsilon(0,x_0,\mathbf{y})=&\max\{0,\mathbb{E}[g(0,x_0,\mathbf{y},\xi)]-\alpha\}&\\
%&+\max\{0,\|\mathbb{E}[A(\xi)]x_0+\mathbb{E}[B(\xi)]\mathbf{y}+\mathbb{E}[Q(0,\xi)]\|_\infty-\epsilon\}.
%\end{aligned}
%\end{eqnarray*}
%Define the level set by $\mathfrak{L}^\epsilon_{\leq\alpha}(0,x_0)=\{\mathbf{y}\in\mathbb{R}^m \, :\,  h_\alpha^\epsilon(0,x_0,\mathbf{y})\leq0\}$. From the function $\mathbb{E}[g(0,x_0,\cdot,\xi)]$ is level-bounded over $K(0,x_0)$, we have the set $\mathfrak{L}^0_{\leq\alpha}(0,x_0)$ is bounded. Hence from the continuity of $h_\alpha^\epsilon(0,x_0,\mathbf{y})$  in $\epsilon$ and $\mathbf{y}$, we obtain that the set $\mathfrak{L}^\epsilon_{\leq\alpha}(0,x_0)$ is bounded for any sufficiently small $\epsilon>0$.

\subsection{Existence in the strong convex case}\label{subse:strong-convex}

In this subsection, we consider a special case of \eqref{OCDE-2} where the objective function is strongly convex.
\begin{assumption}\label{assumption-convexity}
There exists a measurable function $\varrho: \Xi\rightarrow\mathbb{R}_{++}$ with $0<\mathbb{E}[\varrho(\xi)]<\infty$ such that for any $\mathbf{y}_1$, $\mathbf{y}_2\in \mathbb{R}^m$ and $\tau\in (0,1)$,
\[g(t,\mathbf{x},(1-\tau)\mathbf{y}_1+\tau \mathbf{y}_2,\xi)\leq (1-\tau)g(t,\mathbf{x},\mathbf{y}_1,\xi)+\tau g(t,\mathbf{x},\mathbf{y}_2,\xi)-\frac{1}{2}\varrho(\xi) \tau(1-\tau) \|\mathbf{y}_1-\mathbf{y}_2\|^2 \]
holds for any fixed $t\in \mathbb{R}_+$, $\mathbf{x}\in \mathbb{R}^n$ and a.e. $\xi\in\Xi$.
\end{assumption}

Assumption \ref{assumption-convexity} means that $g(t,\mathbf{x},\cdot,\xi)$ is strongly convex for any fixed $t\in \mathbb{R}_+$, $\mathbf{x}\in \mathbb{R}^n$ and a.e. $\xi\in\Xi$ and $\mathbb{E}[g(t,\mathbf{x},\cdot,\xi)]$ is also strongly convex.
Under Assumption \ref{assumption-convexity} we have the following result about the existence of solutions of  problem \eqref{OCDE-1}-\eqref{OCDE-2}.

\begin{theorem}\label{existence-strong-convex}
Suppose that Assumption \ref{assumption-convexity} holds and $\textrm{int} K(0,x_0)\neq\emptyset$.
Then there exists $\tilde{T}>0$ such that problem \eqref{OCDE-1}-\eqref{OCDE-2} has a classic solution $(\tilde{x},\tilde{y})$ on $[0,\tilde{T}]$. In addition, there exists ${\rho}>0$ such that
\begin{equation}\label{linear-growth-x}
\|\tilde{y}(t)\|\leq {\rho}(1+|t|+\|\tilde{x}(t)\|) \quad {\rm for }\quad t\in [0, \tilde{T}].
\end{equation}
\end{theorem}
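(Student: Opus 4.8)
The plan is to establish \eqref{linear-growth-x} first, uniformly on a neighborhood, and then use it together with the strong-convexity-induced regularity of the solution map to run a Picard/Carath\'eodory-type existence argument. First I would fix the interior-point condition: since $\textrm{int}\,K(0,x_0)\neq\emptyset$, the same argument as in the proof of Theorem \ref{existence-weak} produces $\sigma>0$, $\delta>0$ so that $K$ is continuous (indeed strictly continuous, being graph-convex with nonempty-interior values) over $[0,\sigma]\times\mathcal B(x_0,\delta)$, and so that $\textrm{int}\,K(t,\mathbf x)\neq\emptyset$ there. Under Assumption \ref{assumption-convexity} the objective $\mathbb E[g(t,\mathbf x,\cdot,\xi)]$ is strongly convex with modulus $\mathbb E[\varrho(\xi)]>0$, hence level-bounded over every nonempty closed convex set; so $\mathcal S(t,\mathbf x)$ is a \emph{singleton} $\tilde y(t,\mathbf x)$ for each $(t,\mathbf x)$ in that box, and by the same reasoning as before (via \cite[Theorem 3.1]{yasushi2015} / \cite[Example 1.11]{Rockafellar}) it is upper semicontinuous, hence continuous as a single-valued map.

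Next I would prove the linear growth bound \eqref{linear-growth-x}. The key is that the constraint data have controlled growth: $\|\mathbb E[Q(t,\xi)]\|\le \mathbb E[\kappa_Q(\xi)]\,t$, so one can exhibit an explicit feasible point of $K(t,\mathbf x)$ whose norm is $O(1+|t|+\|\mathbf x\|)$ — e.g. by taking a fixed Slater point $\bar y\in\textrm{int}\,K(0,x_0)$ and correcting it; because $\mathbb E[B(\xi)]$ maps onto the relevant subspace near that Slater point, a correction of size proportional to $\|\mathbb E[A(\xi)](\mathbf x-x_0)\| + \|\mathbb E[Q(t,\xi)]-\mathbb E[Q(0,\xi)]\|$ restores feasibility, and this is $O(1+|t|+\|\mathbf x\|)$. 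Call this feasible point $y_{\mathrm{feas}}(t,\mathbf x)$. Then strong convexity gives
\begin{equation*}
\tfrac12\mathbb E[\varrho(\xi)]\,\|\tilde y(t,\mathbf x)-y_{\mathrm{feas}}(t,\mathbf x)\|^2 \le \mathbb E[g(t,\mathbf x,y_{\mathrm{feas}}(t,\mathbf x),\xi)] - \mathbb E[g(t,\mathbf x,\tilde y(t,\mathbf x),\xi)] \le \mathbb E[g(t,\mathbf x,y_{\mathrm{feas}}(t,\mathbf x),\xi)] - \mathbb E[g(t,\mathbf x,0,\xi)],
\end{equation*}
where I used optimality of $\tilde y$ only if $0$ is feasible; more robustly, compare $\tilde y$ against $y_{\mathrm{feas}}$ directly using that $\mathbb E[g(t,\mathbf x,\cdot,\xi)] - \mathbb E[g(t,\mathbf x,\tilde y,\xi)]$ is bounded below by the strong-convexity quadratic and the subgradient at the minimizer vanishes on the feasible cone, yielding $\|\tilde y(t,\mathbf x)-y_{\mathrm{feas}}(t,\mathbf x)\| \le C\big(\mathbb E[g(t,\mathbf x,y_{\mathrm{feas}}(t,\mathbf x),\xi)]-\mathbb E[g(t,\mathbf x,\tilde y(t,\mathbf x),\xi)]\big)^{1/2}$. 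Using the domination hypothesis and continuity of $\mathbb E[g]$, the right-hand side is bounded on the compact box $[0,\sigma]\times\overline{\mathcal B(x_0,\delta)}$; but to get the stated $t$- and $\mathbf x$-dependence I would instead bound $\mathbb E[g(t,\mathbf x,y_{\mathrm{feas}},\xi)]-\mathbb E[g(t,\mathbf x,\tilde y,\xi)]$ using Lipschitz-type growth of $g$ in its arguments (dominated integrability) by $O(1+|t|+\|\mathbf x\|+\|y_{\mathrm{feas}}\|)^2$, giving $\|\tilde y(t,\mathbf x)\|\le\|y_{\mathrm{feas}}(t,\mathbf x)\| + C(1+|t|+\|\mathbf x\|) = O(1+|t|+\|\mathbf x\|)$, which is precisely \eqref{linear-growth-x} with a constant $\rho$ valid on the box (and then propagated along the trajectory).

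With $\tilde y$ continuous and satisfying this growth bound, the coupled system collapses to an ODE $\dot x(t) = F(t,x(t))$ with $F(t,\mathbf x) := \mathbb E[f(t,\mathbf x,\tilde y(t,\mathbf x),\xi)]$, which is continuous on $[0,\sigma]\times\mathcal B(x_0,\delta)$ (composition of continuous maps, using the Lipschitz hypothesis \eqref{Lipschitz} and dominated convergence) and satisfies a linear growth bound $\|F(t,\mathbf x)\| \le \rho_f(1+\|\mathbf x\|)$ by \eqref{linear-growth} combined with \eqref{linear-growth-x}. Peano's existence theorem then gives a $C^1$ solution $\tilde x$ on some $[0,\tilde T]$ with $\tilde x(t)\in\mathcal B(x_0,\delta)$, shrinking $\tilde T\le\sigma$ as needed so the trajectory stays in the box; setting $\tilde y(t):=\tilde y(t,\tilde x(t))$ gives the classic solution, and continuity of $\tilde x$ and $\tilde y$ plus \eqref{linear-growth-x} along the trajectory finishes the proof.

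The main obstacle I anticipate is the construction of the feasible point $y_{\mathrm{feas}}(t,\mathbf x)$ with the \emph{correct} linear growth in both $t$ and $\mathbf x$ simultaneously, rather than merely a bound uniform on the compact box: one must quantify how a Slater point of $K(0,x_0)$ deforms as $(t,\mathbf x)$ vary, and this requires a Hoffman-type / metric-regularity estimate for the polyhedral-in-$\mathbf y$ system $\mathbb E[A(\xi)]\mathbf x + \mathbb E[B(\xi)]\mathbf y + \mathbb E[Q(t,\xi)]\le 0$ together with the hypothesis $\|\mathbb E[Q(t,\xi)]\|\le\mathbb E[\kappa_Q(\xi)]t$ that makes the $t$-dependence linear. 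The rest — singleton-valuedness, continuity of $\tilde y$, the reduction to an ODE, and Peano — is routine given the tools already assembled in the proof of Theorem \ref{existence-weak}.
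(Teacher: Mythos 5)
Your overall skeleton is the same as the paper's: strong convexity makes the solution map of \eqref{OCDE-2} single-valued and (via upper semicontinuity, as in Theorem \ref{existence-weak}) continuous, Peano then gives the classic solution, and the growth bound \eqref{linear-growth-x} is obtained by exhibiting a feasible point of norm $O(1+|t|+\|\mathbf{x}\|)$ and transferring that growth to the minimizer through strong convexity. However, the one step you yourself flag as "the main obstacle" is precisely the crux, and your sketch of it does not stand on its own: the claim that a Slater point of $K(0,x_0)$ can be corrected back into $K(t,\mathbf{x})$ "because $\mathbb{E}[B(\xi)]$ maps onto the relevant subspace" invokes a surjectivity-type property that is nowhere assumed and is false in general. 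The paper closes this exactly with the tool you name but defer: $K(t,\tilde{x}(t))$ is a polyhedron in $\mathbf{y}$, so Hoffman's error bound \cite[Lemma 3.2.3]{Facchinei-pangJS-2003}, combined with $\|\mathbb{E}[Q(t,\xi)]\|\leq \mathbb{E}[\kappa_Q(\xi)]\,t$, gives a constant $\alpha$ independent of $t$ such that the least-norm element $\tilde{\mathbf{y}}$ of $K(t,\tilde{x}(t))$ satisfies $\|\tilde{\mathbf{y}}\|\leq\alpha(1+|t|+\|\tilde{x}(t)\|)$; no Slater-point deformation argument is needed.

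The transfer step also differs, and your primary route has a gap relative to the paper's hypotheses: bounding the value gap $\mathbb{E}[g(t,\mathbf{x},y_{\mathrm{feas}},\xi)]-\mathbb{E}[g(t,\mathbf{x},\tilde{y},\xi)]$ by $O\big((1+|t|+\|\mathbf{x}\|+\|y_{\mathrm{feas}}\|)^2\big)$ presupposes a Lipschitz/quadratic-growth property of $g$ in its arguments that is not assumed (only continuity, convexity in $\mathbf{y}$, and domination by an integrable function are available). The paper avoids this entirely: it uses the generalized VI optimality condition at $\hat{y}$ and the strong monotonicity (modulus $\mathbb{E}[\varrho(\xi)]$) of $\partial_{y}\mathbb{E}[g(t,\tilde{x}(t),\cdot,\xi)]$ to get $\|\tilde{\mathbf{y}}-\hat{y}\|\leq\mathbb{E}[\varrho(\xi)]^{-1}\|\tilde{\mathbf{z}}\|$ for any subgradient $\tilde{\mathbf{z}}$ at the feasible point, and then only needs local boundedness of the subdifferential, which holds for finite convex functions. (The compact-box constant bound you mention in passing would in fact already suffice for \eqref{linear-growth-x} as stated, since $1+|t|+\|\tilde{x}(t)\|\geq 1$ on $[0,\tilde{T}]$, but your preferred quadratic-growth detour is not justified by the standing assumptions.) So: right architecture, but the Hoffman step must actually be carried out, and the strong-convexity transfer should be done at the level of subgradients rather than function values.
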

\begin{proof}

Following the proof of Theorem \ref{existence-weak}, there are $\sigma>0$ and $\delta>0$ such that $K(t,x(t))$ is convex and nonempty for any $(t,x(t))\in[0,\sigma]\times \mathcal{B}(x_0,\delta)$ with $x\in C^0([0,\sigma])$. It then derives the existence of a unique optimal solution $\hat{y}(t,x(t))$ for any $(t,x(t))\in[0,\sigma]\times \mathcal{B}(x_0,\delta)$ by the strong convexity of $\mathbb{E}[g(t,x(t),\cdot,\xi)]$. In addition, we can obtain that the optimal solution set $\mathcal{S}$ of the optimization problem \eqref{OCDE-2} is also upper semicontinuous over $[0,\sigma]\times\mathcal{B}(x_0,\delta)$, which means that $\hat{y}(t,x(t))$ is continuous w.r.t. $t$ and $x(t)$ for any $x\in C^0([0,\sigma])$ since $\mathcal{S}$ is single-valued.

Therefore, applying the Peano existence theorem \cite{ODEpeano}, we find that
\begin{equation}\label{Peano}
\left\{
\begin{aligned}
& \dot{x}(t)=\mathbb{E}[f(t,x(t),\hat{y}(t,x(t)),\xi)],&\\
& x(0)=x_0, &\\
\end{aligned}
\right.
\end{equation}
has a solution $\tilde{x}(t)$, where $\tilde{x}\in C^1([0,\sigma])$. Write $\tilde{y}(t)=\hat{y}(t,\tilde{x}(t))$ and then $\tilde{y}\in C^0([0,\sigma])$. Noting
\[\tilde{x}(t)=x_0+\int_{0}^t \mathbb{E}[f(\tau,\tilde{x}(\tau),\tilde{y}(\tau),\xi)] d\tau.\]
Clearly, there exists $\sigma_{\tilde{T}}>0$ such that $\tilde{x}(t)\in\mathcal{B}(x_0,\delta)$ for $t\in[0,\sigma_{\tilde{T}}]$, which derives that problem \eqref{OCDE-1}-\eqref{OCDE-2} has a classic solution $(\tilde{x},\tilde{y})$ on $[0,\tilde{T}]$ with  $\tilde{T}=\min\{\sigma,\sigma_{\tilde{T}}\}$.

Now we prove \eqref{linear-growth-x}.
  Following from \cite[Example 9.14]{Rockafellar} and the continuity and convexity of $\mathbb{E}[g(t,x(t),\cdot,\xi)]$, $\partial_{y(t)} \mathbb{E}[g(t,x(t),y(t),\xi)]$ is nonempty and compact for any $t\in\mathbb{R}_+$, $x(t)\in \mathbb{R}^n$ and $y(t)\in \mathcal{C}$ with a compact subset $\mathcal{C}$ of $\mathbb{R}^m$. Since $\mathbb{E}[g(t,\tilde{x}(t),\cdot,\xi)]$ is strongly convex, the set-valued mapping $\partial_{y(t)} \mathbb{E}[g(t,\tilde{x}(t),\cdot,\xi)]$ is strongly monotone with constant $\mathbb{E}[\varrho(\xi)]$ over $K(t,\tilde{x}(t))$.
To derive \eqref{linear-growth-x}, it suffices to show the linear growth condition of $\hat{y}(t,\tilde{x}(t))$ w.r.t. $t$ and $\tilde{x}(t)$. It is known that $\hat{y}(t,\tilde{x}(t))$  is the unique optimal solution of the optimization problem \eqref{OCDE-2} if and only if $(\hat{y}(t,\tilde{x}(t)),\hat{z}(t,\tilde{x}(t)))$ with $\hat{z}(t,\tilde{x}(t))\in \partial_{y(t)} \mathbb{E}[g(t,\tilde{x}(t),\hat{y}(t,\tilde{x}(t)),\xi)]$ is the unique solution of the generalized variational inequality: find $(\hat{\mathbf{y}},\hat{\mathbf{z}}) $ with $\hat{\mathbf{z}}\in \partial_{y(t)}\mathbb{E}[g(t,\tilde{x}(t),\hat{\mathbf{y}},\xi)]$ such that
$(\mathbf{y}-\hat{\mathbf{y}})^\top \hat{\mathbf{z}}\geq 0$ for any $\mathbf{y}\in K(t,\tilde{x}(t)).$

Note that $K(t,\tilde{x}(t))$ is a polyhedron for any given $t$ and $\tilde{x}(t)$. Let $\tilde{\mathbf{y}}$ be the least-norm element of $K(t,\tilde{x}(t))$. By Hoffman's error bound for linear systems \cite[Lemma 3.2.3]{Facchinei-pangJS-2003}, we know that there exists $\alpha>0$ (independent of $t$) such that $\|\tilde{\mathbf{y}}\|\leq\alpha(1+|t|+\|\tilde{x}(t)\|)$ for all $t$ and $\tilde{x}(t)$ with $K(t,\tilde{x}(t))\neq\emptyset$.
Let $\tilde{\mathbf{z}}\in \partial_{y(t)} \mathbb{E}[g(t,\tilde{x}(t),\tilde{\mathbf{y}},\xi)]$, we have
\begin{eqnarray*}
0\leq (\tilde{\mathbf{y}}-\hat{y}(t,\tilde{x}(t)))^\top\hat{z}(t,\tilde{x}(t)).
\end{eqnarray*}
By the strong monotonicity of $\partial_{y(t)} \mathbb{E}[g(t,\tilde{x}(t),\cdot,\xi)]$, we have
\begin{eqnarray*}
& &\mathbb{E}[\varrho(\xi)] \|\tilde{\mathbf{y}}-\hat{y}(t,\tilde{x}(t))\|^2\leq (\tilde{\mathbf{y}}-\hat{y}(t,\tilde{x}(t)))^\top(\tilde{\mathbf{z}}-\hat{z}(t,\tilde{x}(t)))\\
& &\leq (\tilde{\mathbf{y}}-\hat{y}(t,\tilde{x}(t)))^\top\tilde{\mathbf{z}}\leq \|\tilde{\mathbf{y}}-\hat{y}(t,\tilde{x}(t))\|\|\tilde{\mathbf{z}}\|,
\end{eqnarray*}
which implies that $\|\tilde{\mathbf{y}}-\hat{y}(t,\tilde{x}(t))\|\leq\mathbb{E}[\varrho(\xi)]^{-1}\|\tilde{\mathbf{z}}\|$. By $\|\tilde{\mathbf{y}}\|\leq\alpha(1+|t|+\|\tilde{x}(t)\|)$ and the boundeness of $\partial_{y(t)} \mathbb{E}[g(t,\tilde{x}(t),\tilde{\mathbf{y}},\xi)]$, there exists ${\rho}>0$ such that
$\|\hat{y}(t,\tilde{x}(t))\|\leq {\rho}(1+|t|+\|\tilde{x}(t)\|)$.
\end{proof}

\begin{remark}
Following the proofs of Theorems \ref{existence-weak} and \ref{existence-strong-convex},
The linear growth condition \eqref{linear-growth-x} in Theorem \ref{existence-strong-convex} plays an important role on the subsequent convergence analysis. The paper \cite{Janin1999} investigated a parameterized convex program with linear constraints and a nonsmooth objective function. By assuming the superquadratic and subquadratic growth conditions for the objective function and the Mangasarian-Fromovitz regularity condition (MFC), the authors showed the upper Lipschitz continuity of the unique optimal solution. We can also derive \eqref{linear-growth-x} by the upper Lipschitz continuity of  the optimal solution of problem \eqref{OCDE-2}  w.r.t. $(t,x) $ at the point $(0,x_0)$. Our conditions (conditions of Theorem \ref {existence-strong-convex}) are easier to verify and weaker than the conditions in \cite{Janin1999}.
\end{remark}

The authors in \cite{pangDVI} also established a linear growth condition for the algebraic variable (the solution of a VI) to ensure the convergence of the implicit Euler method for the DVI. Moreover, in \cite{JSpang2009}, Han et al. derived a linear growth condition for the least-norm solution of a monotone linear complementarity problem and proposed an implicit time-stepping method using the least-norm solutions for differential complementarity systems. Without computing the least-norm solution for a monotone DVI, Chen and Wang \cite{chen2013} proposed a regularized time-stepping method for the DVI and provided the corresponding convergence analysis. These results can be extended to problem  \eqref{OCDE-1}-\eqref{OCDE-2} if $g$ is continuously differentiable and independent of $\xi$. This papers focus on the case that $g$ is nonsmooth and random.

\section{Regularization method}\label{se:regularization}
Since $g(t,\mathbf{x},\cdot,\xi)$ is convex for any $t\in\mathbb{R}_+$, $\mathbf{x}\in\mathbb{R}^n$ and a.e. $\xi\in\Xi$, $\mathbb{E}[g(t,\mathbf{x},\cdot,\xi)]$ is convex for any $t\in\mathbb{R}$ and $\mathbf{x}\in \mathbb{R}^n$.
Therefore, we add a regularization term  $\mu\|\mathbf{y}\|^2$ with $\mu>0$ to the objective function in \eqref{OCDE-2} and get the following regularization optimization problem:
\begin{equation}\label{OCDE-2-regularization}
\begin{aligned}
y^\mu(t)= &\arg\min_{\mathbf{y}\in\mathbb{R}^m}g^\mu(t,x(t),\mathbf{y}) &\\
&\textrm{s.t.} \,\,\,\,\mathbf{y}\in K(t,x(t)),
 \end{aligned}
\end{equation}
where $g^\mu(t,x(t),\mathbf{y})= \mathbb{E}[{g}(t,x(t),\mathbf{y},\xi)]+\mu \|\mathbf{y}\|^2$.

Obviously, under the assumption that $\textrm{int} K(0,x_0)\neq\emptyset$, there are $\sigma>0$ and $\delta>0$ such that the optimization problem \eqref{OCDE-2-regularization} has a unique optimal solution $\hat{y}^\mu(t,x(t))$ over $K(t,x(t))$ for any $(t,x(t))\in[0,\sigma]\times \mathcal{B}(x_0,\delta)$ with $x\in C^0([0,\sigma])$.

\begin{proposition}
Suppose that Assumption \ref{assumption-interior-point} holds.
Let $\hat{y}^\mu(t,{x}(t))$ be the unique optimal solution of problem \eqref{OCDE-2-regularization} with some $\mu>0$,  $t\in \mathbb{R}_+$ and $x(t)\in \mathbb{R}^n$.
 Then it holds that
\begin{equation}\label{linear-growth-mu}
\|\hat{y}^\mu(t,{x}(t))\|\leq \min_{\mathbf{y}\in \mathcal{S}(t,x(t))}\|\mathbf{y}\|,
\end{equation}
where $\mathcal{S}(t,x(t))$ is the optimal solution set of problem \eqref{OCDE-2} with $t\in \mathbb{R}_+$ and $x(t)\in \mathbb{R}^n$.
\end{proposition}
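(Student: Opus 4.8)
The plan is to invoke the classical Tikhonov-type comparison between the regularized problem \eqref{OCDE-2-regularization} and the original problem \eqref{OCDE-2}. First I would record the structural facts that make the right-hand side of \eqref{linear-growth-mu} meaningful: as the optimal solution set of a convex program, $\mathcal{S}(t,x(t))$ is a closed convex subset of $K(t,x(t))$ (it is the intersection of the closed convex set $K(t,x(t))$ with a sublevel set of the continuous convex function $\mathbb{E}[g(t,x(t),\cdot,\xi)]$). If $\mathcal{S}(t,x(t))=\emptyset$ the inequality is trivial, so assume it is nonempty; since $\|\cdot\|$ is coercive and $\mathcal{S}(t,x(t))$ is closed, the value $\min_{\mathbf{y}\in\mathcal{S}(t,x(t))}\|\mathbf{y}\|$ is attained at some $y^*\in\mathcal{S}(t,x(t))$, and $v^*:=\mathbb{E}[g(t,x(t),y^*,\xi)]$ is the (finite) optimal value of \eqref{OCDE-2}.

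The core of the argument is a single comparison of objective values. Since $\hat y^\mu(t,x(t))$ minimizes $g^\mu(t,x(t),\cdot)=\mathbb{E}[g(t,x(t),\cdot,\xi)]+\mu\|\cdot\|^2$ over $K(t,x(t))$ and $y^*\in\mathcal{S}(t,x(t))\subseteq K(t,x(t))$ is feasible for \eqref{OCDE-2-regularization}, we obtain
\[\mathbb{E}[g(t,x(t),\hat y^\mu(t,x(t)),\xi)]+\mu\|\hat y^\mu(t,x(t))\|^2\le \mathbb{E}[g(t,x(t),y^*,\xi)]+\mu\|y^*\|^2=v^*+\mu\|y^*\|^2.\]
On the other hand, $\hat y^\mu(t,x(t))\in K(t,x(t))$, so by the definition of $v^*$ as the optimal value of \eqref{OCDE-2} we have $\mathbb{E}[g(t,x(t),\hat y^\mu(t,x(t)),\xi)]\ge v^*$. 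Combining the two, cancelling $v^*$, and dividing by $\mu>0$ gives $\|\hat y^\mu(t,x(t))\|^2\le\|y^*\|^2$, which is exactly \eqref{linear-growth-mu}.

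I do not expect a genuine obstacle here: this is the standard fact that Tikhonov regularization selects a point dominated in norm by the least-norm solution. The only mild points of care are the existence of the least-norm element $y^*$ (handled above via closedness of $\mathcal{S}(t,x(t))$ together with coercivity of $\|\cdot\|$) and the finiteness of $v^*$ (automatic once $\mathcal{S}(t,x(t))\neq\emptyset$); moreover, in the regime $(t,x(t))\in[0,\sigma]\times\mathcal{B}(x_0,\delta)$ identified in the proof of Theorem \ref{existence-weak}, $\mathcal{S}(t,x(t))$ is in fact nonempty and compact, so the bound in \eqref{linear-growth-mu} is genuinely finite there.
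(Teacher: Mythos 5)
Your proof is correct, but it takes a genuinely different route from the paper. You argue by a direct comparison of objective values: test the regularized objective at the least-norm optimizer $y^*\in\mathcal{S}(t,x(t))$, use $\mathbb{E}[g(t,x(t),\hat y^\mu(t,x(t)),\xi)]\ge v^*$ because $\hat y^\mu(t,x(t))$ is feasible for \eqref{OCDE-2}, cancel $v^*$, and divide by $\mu$. The paper instead works at the level of first-order optimality conditions: it writes the variational inequality satisfied by $\hat y^\mu(t,x(t))$ with a subgradient $\hat z^\mu\in\partial_{y(t)}\mathbb{E}[g(t,x(t),\hat y^\mu(t,x(t)),\xi)]$, the analogous inequality at the least-norm solution $\bar y$ with $\bar z$, and then uses monotonicity of the subdifferential mapping to obtain $\mu(\bar y-\hat y^\mu)^\top\hat y^\mu\ge 0$, from which the norm bound follows by Cauchy--Schwarz. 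Your argument is more elementary: it needs no subdifferential calculus, no monotonicity, and no constraint qualification beyond feasibility, and it even delivers the slightly stronger pointwise inequality $\|\hat y^\mu\|^2\le\|y^*\|^2$ in one line; you also take care of attainment of the minimum over $\mathcal{S}(t,x(t))$, which the paper leaves implicit. What the paper's subgradient/monotonicity route buys is continuity with the rest of the analysis: essentially the same VI-plus-monotonicity manipulation reappears later (e.g., in the strong-convexity rate estimate after Theorem 3.4 and in the proof of Theorem 2.4), so the proposition's proof doubles as a template there. Both arguments are valid proofs of \eqref{linear-growth-mu}.
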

\begin{proof}
Since  $\hat{y}^\mu(t,x(t))$ is the unique optimal solution of the optimization problem \eqref{OCDE-2-regularization}, there exists $\hat{z}^\mu(t,x(t)) \in \partial_{y(t)} \mathbb{E}[g(t,x(t),\hat{y}^\mu(t,x(t)),\xi)]$ such that
\[(\mathbf{y}-\hat{y}^\mu(t,x(t)))^\top (\hat{z}^\mu(t,x(t)) +\mu \hat{y}^\mu(t,x(t)))\geq 0, \quad  \forall \, \mathbf{y}\in K(t,x(t)).\]
Let $\bar{z}(t,x(t)) \in \partial_{y(t)} \mathbb{E}[g(t,x(t),\bar{y}(t,x(t)),\xi)]$, where $\bar{y}(t,x(t))$ is the least-norm element of $\mathcal{S}(t,x(t))$. Then we have
\begin{eqnarray}\label{least-1-monotone}
(\bar{y}(t,x(t))-\hat{y}^\mu(t,x(t)))^\top (\hat{z}^\mu(t,x(t))+\mu \hat{y}^\mu(t,x(t)))\geq 0
\end{eqnarray}
and
\begin{eqnarray}\label{least-monotone}
(\hat{y}^\mu(t,x(t))-\bar{y}(t,x(t)))^\top \bar{z}(t,x(t))\geq0.
\end{eqnarray}
Since $\mathbb{E}[g(t,x(t),\cdot,\xi)]$ is convex, the set-valued mapping $\partial_{y(t)} \mathbb{E}[g(t,x(t),\cdot,\xi)]$ is monotone \cite[Theorem 12.17]{Rockafellar}.
Therefore, from \eqref{least-monotone}, we can obtain
\[(\hat{y}^\mu(t,x(t))-\bar{y}(t,x(t)))^\top \hat{z}^\mu(t,x(t))\geq0.\]
We then get from \eqref{least-1-monotone} that $\mu(\bar{y}(t,x(t))-\hat{y}^\mu(t,x(t)))^\top\hat{y}^\mu(t,x(t))\geq 0$, which implies that $\|\hat{y}^\mu(t,x(t))\|\leq\|\bar{y}(t,x(t))\|$.
\end{proof}

\begin{theorem}\label{existence-regularization}
Suppose that the set $\textrm{int} K(0,x_0)$ is not empty. Then there exists $\hat{T}(\mu)>0$ such that problem \eqref{OCDE-1} with \eqref{OCDE-2-regularization} has a solution $(x^\mu,y^\mu)\in C^1([0,\hat{T}(\mu)]) \times C^0([0,\hat{T}(\mu)])$ for any $\mu>0$. Moreover, there is a positive number $\hat{T}_0$ such that $\hat{T}(\mu)\geq \hat{T}_0$ for any $\mu>0$ if the optimal solution set of problem \eqref{OCDE-2} is not empty.
\end{theorem}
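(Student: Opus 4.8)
The plan is to derive the first assertion as a direct application of Theorem~\ref{existence-strong-convex} to the regularized problem, and then to upgrade the local existence time to a $\mu$-independent one by exploiting the estimate \eqref{linear-growth-mu} established in the Proposition above.

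For the first part, I would observe that for every $\mu>0$ the function $\tilde g(t,\mathbf x,\mathbf y,\xi):=g(t,\mathbf x,\mathbf y,\xi)+\mu\|\mathbf y\|^2$ satisfies Assumption~\ref{assumption-convexity} with $\varrho(\xi)\equiv 2\mu$; indeed, combining the convexity of $g(t,\mathbf x,\cdot,\xi)$ with the identity $(1-\tau)\|\mathbf y_1\|^2+\tau\|\mathbf y_2\|^2-\|(1-\tau)\mathbf y_1+\tau\mathbf y_2\|^2=\tau(1-\tau)\|\mathbf y_1-\mathbf y_2\|^2$ gives exactly the required strong-convexity inequality with modulus $2\mu$. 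Moreover, adding the deterministic term $\mu\|\mathbf y\|^2$ leaves $f$ untouched and preserves the standing continuity and domination hypotheses, while $\mathbb E[\tilde g(t,x(t),\mathbf y,\xi)]=g^\mu(t,x(t),\mathbf y)$. Hence, under $\textrm{int}K(0,x_0)\neq\emptyset$, Theorem~\ref{existence-strong-convex} applies verbatim to \eqref{OCDE-1} with \eqref{OCDE-2-regularization} and produces $\hat T(\mu)>0$ together with $(x^\mu,y^\mu)\in C^1([0,\hat T(\mu)])\times C^0([0,\hat T(\mu)])$ solving it, where $y^\mu(t)=\hat y^\mu(t,x^\mu(t))$.

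For the uniform lower bound I would rerun the local-existence construction (the Peano argument in the proof of Theorem~\ref{existence-strong-convex}) while keeping track of the dependence on $\mu$. Arguing as in the proof of Theorem~\ref{existence-weak}, the nonemptiness of the solution set $\mathcal S$ of the \emph{unregularized} problem \eqref{OCDE-2}, together with the standing hypotheses, furnishes constants $\sigma>0$, $\delta>0$, $\rho_s>0$, all independent of $\mu$, such that $K$ is continuous on $[0,\sigma]\times\mathcal B(x_0,\delta)$, $\mathcal S(t,x(t))$ is nonempty and compact there, and $\sup\{\|\mathbf y\|:\mathbf y\in\mathcal S(t,x(t))\}\le\rho_s$. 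On this region $\hat y^\mu(t,x(t))$ is the unique minimizer of the strongly convex function $g^\mu(t,x(t),\cdot)$ over the nonempty polyhedron $K(t,x(t))$, hence is jointly continuous in $(t,x(t))$ (uniqueness together with upper semicontinuity of the solution map), and by the Proposition preceding this theorem $\|\hat y^\mu(t,x(t))\|\le\min_{\mathbf y\in\mathcal S(t,x(t))}\|\mathbf y\|\le\rho_s$ for all $\mu>0$ and all $(t,x(t))\in[0,\sigma]\times\mathcal B(x_0,\delta)$. Using in addition the growth bound $\|\mathbb E[f(t,x,y,\xi)]\|\le\check{\rho}_f(1+|t|+\|x\|+\|y\|)$ coming from the Lipschitz assumption on $f$, the map $(t,x)\mapsto\mathbb E[f(t,x,\hat y^\mu(t,x),\xi)]$ governing $x^\mu$ is continuous on $[0,\sigma]\times\mathcal B(x_0,\delta)$ and bounded there by $M:=\check{\rho}_f(1+\sigma+\|x_0\|+\delta+\rho_s)$, which does not depend on $\mu$. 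A standard continuation argument then shows that $x^\mu$ exists and stays in $\mathcal B(x_0,\delta)$ at least until time $\min\{\sigma,\delta/M\}$, since $\|x^\mu(t)-x_0\|\le Mt$ as long as $x^\mu$ remains in the ball; taking $\hat T_0:=\min\{\sigma,\delta/M\}$ gives $\hat T(\mu)\ge\hat T_0$ for every $\mu>0$.

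The step I expect to be the crux is precisely securing this uniformity in $\mu$. The linear-growth constant that Theorem~\ref{existence-strong-convex} would provide for the regularized problem is inversely proportional to the strong-convexity modulus $\mathbb E[\varrho(\xi)]=2\mu$ and therefore blows up as $\mu\downarrow0$, so it is useless here; the whole point of \eqref{linear-growth-mu} is that it replaces this by the $\mu$-free bound $\|\hat y^\mu(t,x(t))\|\le\min_{\mathbf y\in\mathcal S(t,x(t))}\|\mathbf y\|$, while the region $[0,\sigma]\times\mathcal B(x_0,\delta)$ and the constant $\rho_s$, being extracted from the \emph{unregularized} solution set, do not move with $\mu$. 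A minor technical point to watch is that the subgradient/variational characterization underlying \eqref{linear-growth-mu} requires a constraint qualification uniformly over the chosen region; this is supplied by $\textrm{int}K(0,x_0)\neq\emptyset$ through the continuity of $K$.
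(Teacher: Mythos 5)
Your proposal is correct and follows essentially the same route as the paper: local existence for each $\mu$ comes from applying the strong-convexity existence theorem to $g^\mu$ (strong convexity modulus $2\mu$), and the $\mu$-uniform lower bound on the existence time comes from combining the $\mu$-free bound \eqref{linear-growth-mu} on $\hat y^\mu$ (valid when the solution set of \eqref{OCDE-2} is nonempty and locally bounded) with the linear growth of $\mathbb{E}[f]$ to bound the exit time of $x^\mu$ from $\mathcal{B}(x_0,\delta)$ below by a constant independent of $\mu$. The paper phrases the last step as an integral inequality $\delta_0\leq\int_0^{\bar\sigma(\mu)}(\check\rho_f\tau+\Theta)\,d\tau$ yielding a quadratic lower bound for $\bar\sigma(\mu)$, whereas you use the simpler bound $\|x^\mu(t)-x_0\|\leq Mt$, but the substance is the same.
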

\begin{proof}
Similar with the proof of Theorem \ref{existence-strong-convex}, there are $\sigma>0$ and $\bar{\sigma}(\mu)>0$ such that $\hat{T}(\mu)=\min\{\sigma,\bar{\sigma}(\mu)\}$.

Now we illustrate the existence of $\hat{T}_0$. By \eqref{linear-growth-mu}, there is $\rho_\alpha>0$ such that $\|\hat{y}^\mu(t,x(t))\|\leq\rho_\alpha$ for any $t$, $x(t)$ and $\mu$. Obviously, if $\bar{\sigma}(\mu)\geq\sigma$ for any $\mu>0$, we have $\hat{T}_0=\sigma$. If $\bar{\sigma}(\mu)<\sigma$ for some $\mu>0$, we know that there is $\delta_0\in (0, \delta]$ such that $\|x^\mu(\bar{\sigma}(\mu))-x_0\|=\delta_0$. From
$\|x^\mu(t)-x_0\|\leq\delta$
 for any $t\in[0,\bar{\sigma}(\mu))$, we obtain
\begin{eqnarray*}
\begin{aligned}
\delta_0&=\left\|\int_{0}^{\bar{\sigma}(\mu)}\mathbb{E}[f(\tau,{x}^\mu(\tau),\hat{y}^\mu(\tau,{x}^\mu(\tau)),\xi)] d\tau\right\|\leq\int_{0}^{\bar{\sigma}(\mu)}(\check{\rho}_f\tau+\Theta)d\tau&\\
\end{aligned}
\end{eqnarray*}
which means that $\bar{\sigma}(\mu)\geq \sqrt{\Theta^2+2\check{\rho}_f\delta_0}-\Theta>0$, where
$\Theta=\check{\rho}_f(1+\|x_0\|+\delta+\rho_\alpha)$. Therefore, we conclude the desired result.
\end{proof}

In the convergence analysis of the regularization method as $\mu \downarrow 0$, we use the following notations.
 Let $\mathcal{X}_{T}$ denote the space of $n$-dimensional vector-valued continuous functions over $[0,T]$ equipped with the norm
\[\|u\|_s:=\sup_{t\in[0,T]}\|u(t)\|\]
and $\mathcal{Y}_{T}$ denote the space of $m$-dimensional vector-valued square integrable
functions over $[0, T]$ equipped with the norm
\[\|v\|_{L^2}:=\left(\int_{0}^T\|v(t)\|^2 dt\right)^{\frac{1}{2}}.\]
We define the norm for $(u,v)\in \mathcal{X}_{T} \times \mathcal{Y}_{T}$ by
\[\|(u,v)\|_{\mathcal{X}_{T} \times \mathcal{Y}_{T}}=\|u\|_s+\|v\|_{L^2}.\]
Let $\mathfrak{X}_T$ and $\mathfrak{Y}_T$ denote the space of real-valued continuous functions and real-valued square integrable functions over $[0,T]$, respectively. When $n=1$, we have $\mathfrak{X}_T=\mathcal{X}_T$. Let $\mathcal{Z}_{T}$ denote the space of $m$-dimensional vector-valued continuous functions over $[0,T]$.
Similarly, we define
\begin{equation*}
\begin{aligned}
& \|(u,v)\|_{\mathcal{X}_{T} \times \mathcal{Z}_{T}}=\|u\|_s+\sup_{t\in[0,T]}\|v(t)\|,\,\,\forall\,\,(u,v)\in \mathcal{X}_{T} \times \mathcal{Z}_{T}, &\\
& \|(u,v)\|_{\mathcal{X}_{T} \times \mathfrak{X}_{T}}=\|u\|_s+\sup_{t\in[0,T]}|v(t)|,\,\,\forall\,\,(u,v)\in \mathcal{X}_{T} \times \mathfrak{X}_{T}, &\\
& \|(u,v)\|_{\mathcal{X}_{T} \times \mathfrak{Y}_T}=\|u\|_s+\left(\int_{0}^T v^2(\tau)d\tau\right)^{\frac{1}{2}},\,\,\forall\,\,(u,v)\in \mathcal{X}_{T} \times \mathfrak{Y}_T. &\\
\end{aligned}
\end{equation*}

Denote the optimal value function of optimization problem \eqref{OCDE-2} by $g_{min}(t,x(t))$ with $t\in \mathbb{R}_+$ and $x(t)\in\mathbb{R}^n$. According to \cite[Theorem 3.1]{yasushi2015}, we know that $g_{min}$ is continuous over $[0,\sigma]\times\mathcal{B}(x_0,\delta)$ under Assumption \ref{assumption-interior-point} for some $\sigma$ and $\delta$ in the proof of Theorem \ref{existence-weak}.
 Define
\begin{equation*}
\Phi(x,y)(t)=\left(\begin{array}{c}
               x(t)-x_0-\int_{0}^t \mathbb{E}[f(\tau,x(\tau),y(\tau),\xi)] d\tau\\
               \mathbb{E}[g(t,x(t),y(t),\xi)]-g_{min}(t,x(t))
             \end{array}\right).
\end{equation*}
Let some suitable $T$ with $\sigma\geq T>0$ be fixed. Obviously, we have $\Phi(x,y)\in \mathcal{X}_{T} \times \mathfrak{Y}_T$ for any $(x,y)\in \mathcal{X}_{T} \times \mathcal{Y}_{T}$, and $\Phi(x,y)\in \mathcal{X}_{T} \times \mathfrak{X}_{T}$ for any $(x,y)\in \mathcal{X}_{T} \times \mathcal{Z}_{T}$.
Moreover, we know that $\|\Phi(x,y)\|_{\mathcal{X}_{T} \times \mathfrak{Y}_T}=0$ and $y(t)\in K(t,x(t))$ imply that $(x,y)$ is a weak solution of problem \eqref{OCDE-1}-\eqref{OCDE-2}. And for a continuous function $y\in \mathcal{Z}_T$, $\|\Phi(x,y)\|_{\mathcal{X}_{T} \times \mathfrak{X}_{T}}=0$ and $y(t)\in K(t,x(t))$ imply that $(x,y)$ is a classic solution of problem \eqref{OCDE-1}-\eqref{OCDE-2}.
Similarly, let $g_{min}^\mu(t,x(t))$ denote the optimal value function of the optimization problem \eqref{OCDE-2-regularization} with $t\in \mathbb{R}$, $x(t)\in\mathbb{R}^n$ and $\mu>0$, and define
\begin{equation}\label{phi-mu}
\Phi^\mu(x,y)(t)=\left(\begin{array}{c}
               x(t)-x_0-\int_{0}^t \mathbb{E}[f(\tau,x(\tau),y(\tau),\xi)] d\tau\\
               \mathbb{E}[g(t,x(t),y(t),\xi)]+\mu \|y(t)\|^2-g^\mu_{min}(t,x(t))
             \end{array}\right).
\end{equation}
If $(x^\mu,y^\mu)\in C^1([0,{T}]) \times C^0([0,{T}])$ is a solution of problem \eqref{OCDE-1} with \eqref{OCDE-2-regularization}, we have $\|\Phi^\mu(x^\mu,y^\mu)\|_{\mathcal{X}_{T} \times \mathfrak{X}_{T}}=0$ and then $\|\Phi^\mu(x^\mu,y^\mu)\|_{\mathcal{X}_{T} \times \mathfrak{Y}_T}=0$.

Let $U_1$ and $U_2$ be the spaces taken either $U_1=\mathcal{X}_{T} \times \mathfrak{X}_{T}$ or $U_1=\mathcal{X}_{T} \times \mathfrak{Y}_T$, and $U_2=\mathcal{X}_{T} \times \mathcal{Z}_{T}$ or $U_2=\mathcal{X}_{T} \times \mathcal{Y}_{T}$. A sequence $\{\Psi^k\}_{k=1}^\infty$ is said to be epigraphically convergent to a function $\Psi$, denoted by $\Psi^k \rightarrow^{epi} \Psi$, if
\begin{enumerate}
  \item [(i)] $\lim\inf_{k\rightarrow\infty} \Psi^k(x^k,y^k)\geq \Psi(x,y)$ for any sequence $\{(x^k,y^k)\}_{k=1}^\infty \subseteq U_2$ with $(x^k,y^k)\rightarrow (x,y)$ by the norm $\|\cdot\|_{U_2}$;
  \item [(ii)] $\lim\sup_{k\rightarrow\infty} \Psi^k(x^k,y^k)\leq \Psi(x,y)$ for some sequence $\{(x^k,y^k)\}_{k=1}^\infty \subseteq U_2$ with $(x^k,y^k)\rightarrow (x,y)$ by the norm $\|\cdot\|_{U_2}$.
\end{enumerate}
To study the convergence of $\{(x^\mu,y^\mu)\}$ in $U_2$, we firstly have the following lemma about the mapping $\|\Phi^\mu\|_{U_1}$ is epigraphically convergent to $\|\Phi\|_{U_1}$ as $\mu\downarrow 0$.

\begin{lemma}\label{epi-lemma}
Suppose that Assumption \ref{assumption-interior-point} holds.
Let $\{\mu_k\}_{k=1}^\infty \downarrow 0$ be given and $\Phi^k=\Phi^{\mu_k}$ be defined in \eqref{phi-mu}. Then for any sequence $\{(x^{k},y^{k})\}_{k=1}^\infty\subset U_2$ with $(x^{k},y^{k}) \rightarrow (x,y)$ by the norm $\|\cdot\|_{U_2}$ as $k\rightarrow\infty$, we have $\|\Phi^{k}(x^k,y^k)\|_{U_1}\rightarrow\|\Phi(x,y)\|_{U_1}$ and $\|\Phi^{k}\|_{U_1}\rightarrow^{epi}\|\Phi\|_{U_1}$.
\end{lemma}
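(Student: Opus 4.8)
The plan is to prove the pointwise convergence $\|\Phi^k(x^k,y^k)\|_{U_1}\to\|\Phi(x,y)\|_{U_1}$ first, and then deduce the epi-convergence $\|\Phi^k\|_{U_1}\to^{epi}\|\Phi\|_{U_1}$ almost for free. For the pointwise statement, note that $\Phi^k(x^k,y^k)(t)$ and $\Phi(x,y)(t)$ each have two components. The first component is identical in $\Phi^k$ and $\Phi$ (it does not involve $\mu$), so I would show it converges using the convergence $(x^k,y^k)\to(x,y)$ in $\|\cdot\|_{U_2}$ together with the Lipschitz bound \eqref{Lipschitz} on $f$: the integrand $\mathbb{E}[f(\tau,x^k(\tau),y^k(\tau),\xi)]$ converges to $\mathbb{E}[f(\tau,x(\tau),y(\tau),\xi)]$ in an $L^1$ or $L^2$ sense over $[0,T]$ (using $\|x^k-x\|_s\to0$ and $\|y^k-y\|_{L^2}\to0$ or the sup-norm version, depending on which $U_2$ we are in), and hence the integral converges uniformly in $t$. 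The second component differs by the term $\mu_k\|y^k(t)\|^2$ and by the replacement of $g_{min}$ with $g^\mu_{min}$. Here I would use that $\mu_k\downarrow0$ and that $y^k\to y$ in $U_2$ (so $\|y^k(t)\|$ stays bounded in the relevant norm, uniformly or in $L^2$) to kill the regularization term, and I would use continuity of $\mathbb{E}[g(\cdot,\cdot,\cdot,\xi)]$ together with the convergence $g^\mu_{min}\to g_{min}$ to handle the optimal-value functions. The latter convergence follows because $0\le g^\mu_{min}(t,x(t))-g_{min}(t,x(t))\le \mu\min_{\mathbf y\in\mathcal S(t,x(t))}\|\mathbf y\|^2$ (evaluate the regularized objective at the least-norm optimal solution of \eqref{OCDE-2}), and the right-hand side is bounded uniformly on $[0,\sigma]\times\mathcal B(x_0,\delta)$ by the argument already used in Theorem \ref{existence-weak} / the Proposition, where $\rho_s$ bounds $\mathcal S$; since $x^k\to x$ uniformly, this gives uniform convergence of $g^{\mu_k}_{min}(t,x^k(t))$ to $g_{min}(t,x(t))$.

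Combining the two components, $\Phi^k(x^k,y^k)\to\Phi(x,y)$ in the norm $\|\cdot\|_{U_1}$ (uniform convergence of the first component, and uniform or $L^2$ convergence of the scalar second component depending on whether $U_1=\mathcal X_T\times\mathfrak X_T$ or $\mathcal X_T\times\mathfrak Y_T$). Then the reverse triangle inequality $\bigl|\,\|\Phi^k(x^k,y^k)\|_{U_1}-\|\Phi(x,y)\|_{U_1}\,\bigr|\le\|\Phi^k(x^k,y^k)-\Phi(x,y)\|_{U_1}$ gives the claimed scalar convergence.

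For the epi-convergence, I recall that a sequence of functionals that converges pointwise along every convergent sequence in the sense just proved (i.e. $\Psi^k(x^k,y^k)\to\Psi(x,y)$ whenever $(x^k,y^k)\to(x,y)$) automatically satisfies both defining conditions (i) and (ii): condition (i) holds because $\liminf_k\Psi^k(x^k,y^k)=\lim_k\Psi^k(x^k,y^k)=\Psi(x,y)\ge\Psi(x,y)$, and condition (ii) holds by taking the recovery sequence to be $(x^k,y^k)\equiv(x,y)$ (a constant sequence, which is admissible in $U_2$), for which $\limsup_k\Psi^k(x,y)=\lim_k\|\Phi^k(x,y)\|_{U_1}=\|\Phi(x,y)\|_{U_1}=\Psi(x,y)$, using the already-established pointwise convergence with the trivial convergent sequence. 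So $\|\Phi^k\|_{U_1}\to^{epi}\|\Phi\|_{U_1}$.

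I expect the main obstacle to be the uniform (in $t$, and in the case $U_2=\mathcal X_T\times\mathcal Y_T$, handling only $L^2$-convergence of $y^k$) control of the second component: specifically, showing that $g^{\mu_k}_{min}(t,x^k(t))\to g_{min}(t,x(t))$ and $\mathbb{E}[g(t,x^k(t),y^k(t),\xi)]\to\mathbb{E}[g(t,x(t),y(t),\xi)]$ with the right mode of convergence when $y^k$ only converges in $L^2$ rather than uniformly — this forces me to pass to a subsequence converging a.e., invoke the continuity and domination hypotheses on $g$ to get pointwise a.e. convergence of the integrand, and then use a dominated-convergence / uniform-integrability argument to recover $\mathfrak Y_T$-norm convergence; the boundedness needed for domination comes from the $L^2$-boundedness of $\{y^k\}$ and the growth/domination assumptions on $g$ stated in the introduction. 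The first component and the regularization term $\mu_k\|y^k\|^2$ are comparatively routine given $\mu_k\downarrow0$ and the norm bounds on $(x^k,y^k)$.
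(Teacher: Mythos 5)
Your proposal follows essentially the same route as the paper's proof: the central estimate $0\le g^\mu_{min}(t,x(t))-g_{min}(t,x(t))\le\mu\min_{\mathbf{y}\in\mathcal{S}(t,x(t))}\|\mathbf{y}\|^2$ (evaluated at the least-norm optimal solution, with uniform boundedness of $\mathcal{S}$ on $[0,\sigma]\times\mathcal{B}(x_0,\delta)$ giving uniform convergence of $g^\mu_{min}$ to $g_{min}$), splitting off the vanishing regularization term $\mu_k\|y^k\|^2$, invoking continuity of $\Phi$ in $U_2$ together with the triangle inequality to get $\|\Phi^{k}(x^k,y^k)\|_{U_1}\rightarrow\|\Phi(x,y)\|_{U_1}$, and then deducing epi-convergence exactly as the paper does (your constant recovery sequence makes explicit what the paper leaves implicit). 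Your extra care about the case $y^k\to y$ only in $L^2$ is a slightly more detailed treatment of the continuity of $\|\Phi\|_{U_1}$ that the paper simply asserts, but it does not change the argument.
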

\begin{proof}
For any given $\mu>0$, $t\in\mathbb{R}_+$ and $x(t)\in\mathbb{R}^n$, it is clear that $g_{min}(t,x(t))\leq g^\mu_{min}(t,x(t))$ as $\hat{y}^\mu(t,x(t))\in K(t,x(t))$. In addition,
\begin{eqnarray*}
\begin{aligned}
g^\mu_{min}(t,x(t))&=\mathbb{E}[g(t,x(t),\hat{y}^\mu(t,x(t)),\xi)]+\mu \|\hat{y}^\mu(t,x(t))\|^2&\\
&= \min_{\mathbf{y}\in K(t,x(t))} \{\mathbb{E}[g(t,x(t),\mathbf{y},\xi)]+\mu \|\mathbf{y}\|^2 \}&\\
&\leq \min_{\mathbf{y}\in \mathcal{S}(t,x(t))}\{ \mathbb{E}[g(t,x(t),\mathbf{y},\xi)]+\mu \|\mathbf{y}\|^2\} &\\
&\leq g_{min}(t,x(t))+  \mu \min_{\mathbf{y}\in \mathcal{S}(t,x(t))}\|\mathbf{y}\|^2,
\end{aligned}
\end{eqnarray*}
which means that
$|g^\mu_{min}(t,x(t))-g_{min}(t,x(t))|\leq \mu \min_{\mathbf{y}\in \mathcal{S}(t,x(t))}\|\mathbf{y}\|^2$
for any given $(t,x(t))\in [0,\sigma]\times\mathcal{B}(x_0,\delta)$, since $\mathbb{E}[g(t,x(t),\hat{y}^\mu(t,x(t)),\xi)]\geq g_{min}(t,x(t)) $ and \\ $\hat{y}^\mu(t,x(t))$
$\in K(t,x(t))$.
By the uniform boundedness of $\mathcal{S}(t,x(t))$ for any $(t,x(t))\in [0,\sigma]\times\mathcal{B}(x_0,\delta)$ and $g^{\mu_1}_{min}(t,x(t))\leq g^{\mu_2}_{min}(t,x(t))$ for $\mu_1\le \mu_2$,   we can obtain that $g^\mu_{min}$ converges to $g_{min}$ uniformly as $\mu\downarrow0$ over $(t,x(t))\in [0,\sigma]\times\mathcal{B}(x_0,\delta)$.

Let $(x^{k},y^{k})\rightarrow (x,y)$ by the norm $\|\cdot\|_{U_2}$ as $k\rightarrow\infty$.
Taking $U_1=\mathcal{X}_{T} \times \mathfrak{X}_{T}$ and $U_2=\mathcal{X}_{T} \times \mathcal{Z}_{T}$, we have
\begin{eqnarray*}
\begin{aligned}
 &\|\Phi^{k}(x^{k},y^{k})-\Phi(x^{k},y^{k})\|_{U_1} \\
 &\leq\mu_k \sup_{t\in[0,T]}\|y^{k}(t,x^{k}(t))\|^2+ \sup_{t\in[0,T]}|g^{\mu_k}_{min}(t,x^{k}(t))-g_{min}(t,x^{k}(t))|\to 0  \quad  {\rm as}\quad   \mu_k\downarrow 0.
\end{aligned}
\end{eqnarray*}
If we take $U_1=\mathcal{X}_{T}\times \mathfrak{Y}_T$ and $U_2=\mathcal{X}_{T} \times \mathcal{Y}_{T}$, we have
\begin{eqnarray*}
\begin{aligned}
 &\|\Phi^{k}(x^{k},y^{k})-\Phi(x^{k},y^{k})\|_{U_1} \\
 &\leq\mu_k \|y^{k}(\cdot,x^{k})\|^2_{L^2}+ \left(\int_{0}^T(g^{\mu_k}_{min}(t,x^{k}(t))-g_{min}(t,x^{k}(t)))^2dt\right)^{\frac{1}{2}}\to 0  \quad  {\rm as}\quad   \mu_k\downarrow 0.
\end{aligned}
\end{eqnarray*}
Moreover $\|\Phi(x^{k},y^{k})\|_{U_1}\rightarrow\|\Phi(x,y)\|_{U_1}$ as $k\rightarrow\infty$ since $\|\Phi\|_{U_1}$ is continuous. We then obtain
$\|\Phi^{k} (x^{k},y^{k})\|_{U_1}\rightarrow\|\Phi(x,y)\|_{U_1}$ by
\begin{eqnarray*}
\begin{aligned}
&\|\Phi^{k}(x^{k},y^{k})-\Phi(x,y)\|_{U_1}\\
&\leq \|\Phi^{k}(x^{k},y^{k})-\Phi(x^{k},y^{k})\|_{U_1}+\|\Phi(x^{k},y^{k})-\Phi(x,y)\|_{U_1}.
\end{aligned}
\end{eqnarray*}
It then implies that $\|\Phi^{k}\|_{U_1}\rightarrow^{epi}\|\Phi\|_{U_1}$.
\end{proof}

\begin{theorem}\label{convergence-regularization}
Suppose that Assumption \ref{assumption-interior-point} holds. Let $(x^\mu,y^\mu)\in C^1([0,\hat{T}_{0}])\times C^0([0,\hat{T}_{0}])$ be a solution of problem \eqref{OCDE-1} with \eqref{OCDE-2-regularization} for any $\mu>0$. Then there exists a sequence $\{\mu_k\}_{k=1}^\infty\downarrow0$ such that $x^{\mu_k}\rightarrow x^*$ as $k\rightarrow\infty$ uniformly over $[0,\hat{T}_{0}]$ and $y^{\mu_k}\rightarrow y^*$ as $k\rightarrow\infty$ weakly in $\mathcal{Y}_{\hat{T}_{0}}$. In addition,
\begin{enumerate}
  \item [(i)] if $y^{\mu_k}\rightarrow y^*$ w.r.t. $\|\cdot\|_{L^2}$ as $k\rightarrow\infty$, then $(x^*,y^*)$ is a weak solution of \eqref{OCDE-1}-\eqref{OCDE-2} over $[0,\hat{T}_{0}]$;
  \item [(ii)] if $y^{\mu_k}\rightarrow y^*$ uniformly as $k\rightarrow\infty$, then $(x^*,y^*)$ is a classic solution of \eqref{OCDE-1}-\eqref{OCDE-2} over $[0,\hat{T}_{0}]$; moreover, $y^*(t)$ is the unique least-norm optimal solution of problem \eqref{OCDE-2} with $t$ and $x^*(t)$.
\end{enumerate}
\end{theorem}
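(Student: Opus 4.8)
The plan is to establish the three claims in stages: first the compactness/weak-compactness that yields the convergent subsequence, then the identification of the limit under each of the two stronger modes of convergence. First I would obtain uniform bounds. By Theorem~\ref{existence-regularization}, each $(x^\mu,y^\mu)$ is a genuine solution over $[0,\hat T_0]$, and by Proposition~\ref{linear-growth-mu} (more precisely \eqref{linear-growth-mu}) we have $\|\hat y^\mu(t,x^\mu(t))\|\le \min_{\mathbf y\in\mathcal S(t,x^\mu(t))}\|\mathbf y\|\le \rho_\alpha$ uniformly in $t$ and $\mu$, using the uniform boundedness of $\mathcal S$ over $[0,\sigma]\times\mathcal B(x_0,\delta)$ established in the proof of Theorem~\ref{existence-weak}. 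Hence $\{y^\mu\}$ is uniformly bounded in $L^\infty([0,\hat T_0])\subset \mathcal Y_{\hat T_0}$, and by the Lipschitz/linear-growth bound on $\mathbb E[f]$ (cf.\ \eqref{linear-growth}) the family $\{x^\mu\}$ is uniformly bounded with uniformly bounded derivatives, hence equi-Lipschitz. The Arzel\`a--Ascoli theorem gives a subsequence $\mu_k\downarrow 0$ with $x^{\mu_k}\to x^*$ uniformly on $[0,\hat T_0]$, and the Banach--Alaoglu / weak sequential compactness of bounded sets in the Hilbert space $\mathcal Y_{\hat T_0}$ gives (after refining) $y^{\mu_k}\rightharpoonup y^*$ weakly in $\mathcal Y_{\hat T_0}$. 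This settles the first sentence of the theorem.

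Next I would treat part~(i). Assuming in addition that $y^{\mu_k}\to y^*$ strongly in $\|\cdot\|_{L^2}$, I want to show $\|\Phi(x^*,y^*)\|_{U_1}=0$ and $y^*(t)\in K(t,x^*(t))$ for a.e.\ $t$, with $U_1=\mathcal X_{\hat T_0}\times\mathfrak Y_{\hat T_0}$, $U_2=\mathcal X_{\hat T_0}\times\mathcal Y_{\hat T_0}$. Since $(x^{\mu_k},y^{\mu_k})$ solves the regularized problem, $\|\Phi^{\mu_k}(x^{\mu_k},y^{\mu_k})\|_{U_1}=0$. Now $(x^{\mu_k},y^{\mu_k})\to(x^*,y^*)$ in $\|\cdot\|_{U_2}$, so Lemma~\ref{epi-lemma} yields $\|\Phi^{\mu_k}(x^{\mu_k},y^{\mu_k})\|_{U_1}\to\|\Phi(x^*,y^*)\|_{U_1}$, whence $\|\Phi(x^*,y^*)\|_{U_1}=0$. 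For the feasibility $y^*(t)\in K(t,x^*(t))$: each $y^{\mu_k}(t)\in K(t,x^{\mu_k}(t))$, i.e.\ $\mathbb E[A(\xi)]x^{\mu_k}(t)+\mathbb E[B(\xi)]y^{\mu_k}(t)+\mathbb E[Q(t,\xi)]\le 0$; passing to a further subsequence converging pointwise a.e.\ (from $L^2$ convergence) and using the uniform convergence of $x^{\mu_k}$ and continuity of $\mathbb E[Q(\cdot,\xi)]$, the inequality is preserved in the limit for a.e.\ $t$. By the remark following the definition of $\Phi$, $\|\Phi(x^*,y^*)\|_{U_1}=0$ together with $y^*(t)\in K(t,x^*(t))$ means precisely that $(x^*,y^*)$ is a weak solution.

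For part~(ii), assume instead $y^{\mu_k}\to y^*$ uniformly. Then $y^*\in\mathcal Z_{\hat T_0}$, and we may take $U_1=\mathcal X_{\hat T_0}\times\mathfrak X_{\hat T_0}$, $U_2=\mathcal X_{\hat T_0}\times\mathcal Z_{\hat T_0}$; the same Lemma~\ref{epi-lemma} argument gives $\|\Phi(x^*,y^*)\|_{U_1}=0$, and feasibility $y^*(t)\in K(t,x^*(t))$ for all $t$ follows from the uniform (hence pointwise) limit and continuity. By the corresponding remark, $(x^*,y^*)$ is a classic solution. It remains to identify $y^*(t)$ as the least-norm element of $\mathcal S(t,x^*(t))$. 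Here I would argue pointwise in $t$: fix $t$; from $\|\Phi(x^*,y^*)\|_{U_1}=0$ the second component vanishes, i.e.\ $\mathbb E[g(t,x^*(t),y^*(t),\xi)]=g_{min}(t,x^*(t))$, so $y^*(t)\in\mathcal S(t,x^*(t))$. On the other hand, \eqref{linear-growth-mu} gives $\|y^{\mu_k}(t)\|=\|\hat y^{\mu_k}(t,x^{\mu_k}(t))\|\le \min_{\mathbf y\in\mathcal S(t,x^{\mu_k}(t))}\|\mathbf y\|$; using continuity of $x^{\mu_k}\to x^*$, upper semicontinuity of $\mathcal S$, and the fact that $\mathcal S$ is compact-convex-valued (so the least-norm selection $\bar y(t,\cdot)$ is continuous, or at least its norm is upper semicontinuous), letting $k\to\infty$ gives $\|y^*(t)\|\le \min_{\mathbf y\in\mathcal S(t,x^*(t))}\|\mathbf y\|$. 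Combined with $y^*(t)\in\mathcal S(t,x^*(t))$, this forces $y^*(t)$ to be the unique least-norm point of the convex set $\mathcal S(t,x^*(t))$.

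The main obstacle is the interchange of limits needed in part~(ii) for the least-norm identification: one must control $\min_{\mathbf y\in\mathcal S(t,x^{\mu_k}(t))}\|\mathbf y\|$ as $x^{\mu_k}(t)\to x^*(t)$, which requires (at least) outer semicontinuity of $\mathcal S(t,\cdot)$ at $x^*(t)$ to keep the limit inside $\mathcal S(t,x^*(t))$ and some form of inner semicontinuity (or the graph-convexity/Lipschitz continuity of $K$ established in the proof of Theorem~\ref{existence-weak}, together with level-boundedness) to prevent the least-norm value from dropping in the limit; the subtlety is that $\mathcal S$ was only shown to be upper (outer) semicontinuous, so one leans on the polyhedrality of $K(t,\cdot)$ and Hoffman's bound, as in the proof of Theorem~\ref{existence-strong-convex}, to get the needed one-sided continuity of the least-norm value. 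A secondary technical point is justifying the passage to the limit under the integral sign in the first component of $\Phi$ when only weak $L^2$ convergence of $y^{\mu_k}$ is available in the preliminary step; this is why the clean identification of solutions is split off into the two cases (i) and (ii) where stronger convergence is assumed, and in those cases dominated convergence (using the uniform bound $\|y^{\mu_k}\|\le\rho_\alpha$ and the integrable Lipschitz modulus $\kappa_f$) handles the integral term.
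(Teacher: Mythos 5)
Your compactness step, part (i), and the identification of $(x^*,y^*)$ as a weak/classic solution in part (ii) follow the paper's argument essentially verbatim (uniform bounds via \eqref{linear-growth-mu} and the linear growth of $\mathbb{E}[f]$, Arzel\'{a}--Ascoli, Alaoglu, feasibility of the limit, and Lemma \ref{epi-lemma} applied to $\|\Phi^{\mu_k}(x^{\mu_k},y^{\mu_k})\|_{U_1}=0$), and those parts are fine. The genuine gap is in the last claim of (ii), the identification of $y^*(t)$ as the least-norm element of $\mathcal{S}(t,x^*(t))$. You pass to the limit in $\|y^{\mu_k}(t)\|\le\min_{\mathbf{y}\in\mathcal{S}(t,x^{\mu_k}(t))}\|\mathbf{y}\|$, which requires $\liminf_k\min_{\mathbf{y}\in\mathcal{S}(t,x^{\mu_k}(t))}\|\mathbf{y}\|\le\min_{\mathbf{y}\in\mathcal{S}(t,x^*(t))}\|\mathbf{y}\|$, i.e.\ lower (inner) semicontinuity of $x\mapsto\mathcal{S}(t,x)$ at $x^*(t)$, or equivalently upper semicontinuity of the least-norm value in $x$. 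The paper only establishes upper (outer) semicontinuity of $\mathcal{S}$, which yields the opposite inequality, and solution maps of convex programs are in general not inner semicontinuous. Your proposed patch --- polyhedrality of $K(t,\cdot)$ plus Hoffman's bound as in Theorem \ref{existence-strong-convex} --- does not close this: Hoffman's bound controls the least-norm element of the polyhedral feasible set $K(t,x)$, whereas here you must control the least-norm element of the optimal set $\mathcal{S}(t,x)$, which is the intersection of $K(t,x)$ with a level set of the merely convex, nonsmooth objective and is not polyhedral, so no Hoffman-type error bound applies to it. You flag this as ``the main obstacle'' but leave it unresolved, so the least-norm claim is not proved.

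The paper circumvents exactly this difficulty by never comparing least-norm values at different states: it considers the regularized minimizer at the frozen limit state, $\hat{y}^{\mu_k}(t,x^*(t))$, argues (from $y^{\mu_k}(t)=\hat{y}^{\mu_k}(t,x^{\mu_k}(t))$, the uniform convergence $y^{\mu_k}\to y^*$ and the continuity of $\hat{y}^{\mu_k}$) that $\hat{y}^{\mu_k}(t,x^*(t))\to y^*(t)$, and then applies \eqref{linear-growth-mu} at the single point $(t,x^*(t))$, namely $\|\hat{y}^{\mu_k}(t,x^*(t))\|\le\min_{\mathbf{y}\in\mathcal{S}(t,x^*(t))}\|\mathbf{y}\|$; the bound then passes to the limit with only the fixed set $\mathcal{S}(t,x^*(t))$ involved, and combined with $y^*(t)\in\mathcal{S}(t,x^*(t))$ it gives the least-norm property without any semicontinuity of the least-norm value. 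Rewriting your final step along these lines would repair the argument.
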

\begin{proof}
Notice that the Lipschitz property (\ref{Lipschitz}) implies that $\mathbb{E}[f(\cdot,\cdot,\cdot,\xi)]$ has linear growth in $(t,\mathbf{x},\mathbf{y})\in\mathbb{R}\times\mathbb{R}^n \times \mathbb{R}^m$, i.e., there exists $\check{\rho}_f>0$ such that $\|\mathbb{E}[f(t,\mathbf{x},\mathbf{y},\xi)]\|\leq\check{\rho}_f(1+|t|+\|\mathbf{x}\|+\|\mathbf{y}\|)$.
Let $(x^\mu,y^\mu)\in C^1([0,\hat{T}_{0}])\times C^0([0,\hat{T}_{0}])$ be a solution of problem \eqref{OCDE-1} with \eqref{OCDE-2-regularization} for any $\mu>0$. According to \eqref{linear-growth-mu} and the compactness of the optimal solution set of problem \eqref{OCDE-2}, there is $\rho_\alpha>0$ (independent of $\mu$, $t$ and $x(t)$) such that $\|y^\mu(t)\|\leq {\rho}_\alpha$. We then have
\begin{eqnarray*}
\begin{aligned}
\|x^\mu(t)\|&\leq \|x_0\|+\int_{0}^t \|\mathbb{E}[f(\tau,x^\mu(\tau),y^\mu(\tau),\xi)]\| d\tau&\\
&\leq \|x_0\|+\check{\rho}_f\int_{0}^t(1+\rho_\alpha+|\tau|+\|x^\mu(\tau)\|)d\tau,
\end{aligned}
\end{eqnarray*}
which implies that for any $t\in[0,\hat{T}_{0}]$ there exists $\bar{\rho}_f>0$ such that
\[\|x^\mu(t)\|\leq \|x_0\|+\bar{\rho}_f\int_{0}^t(1+\|x^\mu(\tau)\|)d\tau.\]
We then have $\|x^\mu\|_s\leq (1+\|x_0\|) \exp(\bar{\rho}_f\hat{T}_{0})-1$, according to \cite[Lemma 2.6]{chen2013}.
Hence, $\{x^\mu\}$ is uniformly bounded on $[0,\hat{T}_{0}]$ for any $\mu>0$ and then so is $\{\dot{x}^\mu\}$, which means that $\{x^\mu\}$ is equicontinuous over $[0,\hat{T}_{0}]$ for any $\mu>0$. By Arzel\'{a}-Ascoli theorem \cite{AA15}, there exists a sequence $\{\mu_k\}_{k=1}^\infty\downarrow0$ such that $\{x^{\mu_k}\}$ is convergent to a point $x^*\in \mathcal{X}_{\hat{T}_{0}}$ uniformly over $[0,\hat{T}_{0}]$.

In addition, we know that $\{y^\mu\}$ is uniformly bounded on $[0,\hat{T}_{0}]$ for any $\mu>0$ by $\|y^\mu(t)\|\leq {\rho}_\alpha(1+|t|+\|x^\mu(t)\|)$. By Alaglu's theorem \cite{AA15}, there exists a subsequence of $\{y^{\mu_k}\}$, which we may assume without loss of generality to
be $\{y^{\mu_k}\}$ itself, has a weak* limit, named $y^*$, in $\mathcal{Y}_{\hat{T}_{0}}$. Since $\mathcal{Y}_{\hat{T}_{0}}$ is a Hilbert space, it is a reflexive Banach space, which implies that weak* convergent sequences are also weakly convergent sequences.

By \cite[Example 9.35]{Rockafellar}, we know that $K(t,\cdot)$ is Lipschitz continuous on its domain for any $t\in \mathbb{R}$, which means that $\mathbb{H}({K}(t,x^\mu(t)),{K}(t,x^*(t)))\rightarrow0$ as $x^\mu\rightarrow x^*$ uniformly.
Then by $y^\mu(t)\in {K}(t,x^\mu(t))$, we have $y^*(t)\in {K}(t,x^*(t))$.
Then following Lemma \ref{epi-lemma}, we know that if $y^{\mu_k}\rightarrow y^*$ w.r.t. $\|\cdot\|_{L^2}$, $(x^*,y^*)$ is a weak solution of \eqref{OCDE-1}-\eqref{OCDE-2} over $[0,\hat{T}_{0}]$; if $y^{\mu_k}\rightarrow y^*$ uniformly as $k\rightarrow\infty$, then $(x^*,y^*)$ is a classic solution of \eqref{OCDE-1}-\eqref{OCDE-2} over $[0,\hat{T}_{0}]$.

Let $\hat{y}^{\mu_k}(t,{x}(t))$ denote the unique optimal solution of problem \eqref{OCDE-2-regularization} with any $\mu_k>0$, $t\in \mathbb{R}_+$ and $x(t)\in \mathbb{R}^n$. Then by $y^{\mu_k}(t)=\hat{y}^{\mu_k}(t,{x}^{\mu_k}(t))$, $y^{\mu_k}\rightarrow y^*$ uniformly as $k\rightarrow\infty$, and the continuity of $\hat{y}^{\mu_k}$, we know that $\lim_{k\rightarrow\infty}\|\hat{y}^{\mu_k}(t,x^*(t))-y^*(t)\|=0$.
Since $(x^*,y^*)$ is a classic solution of \eqref{OCDE-1}-\eqref{OCDE-2}, we obtain that $y^*(t)\in \mathcal{S}(t,x^*(t))$ and then $y^*(t)$ is the unique least-norm element of $\mathcal{S}(t,x^*(t))$ by \eqref{linear-growth-mu}.
\end{proof}

When  there exists a constant $\tilde{\varrho}>0$ such that for any $\mathbf{y}_1$, $\mathbf{y}_2\in \mathbb{R}^m$ and $\tau\in (0,1)$,
\begin{eqnarray*}
\begin{aligned}
\mathbb{E}[g(t,\mathbf{x},(1-\tau)\mathbf{y}_1+\tau \mathbf{y}_2,\xi)]\leq& (1-\tau)\mathbb{E}[g(t,\mathbf{x},\mathbf{y}_1,\xi)]+\tau \mathbb{E}[g(t,\mathbf{x},\mathbf{y}_2,\xi)]&\\
&-\frac{1}{2}\tilde{\varrho} \tau(1-\tau) \|\mathbf{y}_1-\mathbf{y}_2\|^2
\end{aligned}
\end{eqnarray*}
holds for any fixed $t\in \mathbb{R}_+$ and $\mathbf{x}\in \mathbb{R}^n$, the objective function of optimization problem \eqref{OCDE-2} is strongly convex w.r.t. $\mathbf{y}$, and \eqref{OCDE-2} admits a unique optimal solution $y^*$ which is continuous w.r.t. $t$ for any $x\in C^0([0,\sigma])$ by Theorem \ref{existence-strong-convex}. Fix some $x\in C^0([0,\sigma])$ and let $z^\mu(t)\in\partial_{y(t)} \mathbb{E}[g(t,x(t),y^\mu(t),\xi)]$ and $z^*(t)\in\partial_{y(t)} \mathbb{E}[g(t,x(t),y^*(t),\xi)]$.
Then we have
\begin{eqnarray}\label{least-1-monotone-mu}
(y^*(t)-{y}^\mu(t))^\top ({z}^\mu(t)+\mu {y}^\mu(t))\geq 0\textrm{ and }({y}^\mu(t)-{y}^*(t))^\top {z}^*(t)\geq0.
\end{eqnarray}
In this case, by the strong monotonicity of $\partial_{y(t)} \mathbb{E}[g(t,x(t),\cdot,\xi)]$ and \eqref{least-1-monotone-mu} and \eqref{linear-growth-mu}, we have
\begin{equation*}
\begin{aligned}
\tilde{\varrho}\|y^\mu(t)-y^*(t)\|^2&\leq (y^\mu(t)-y^*(t))^\top (z^\mu(t)-z^*(t))\leq (y^\mu(t)-y^*(t))^\top z^\mu(t)&\\
&\leq\mu (y^*(t)-y^\mu(t))^\top y^\mu(t)\leq \mu \|y^*(t)-y^\mu(t)\|\|y^\mu(t)\|&\\
&\leq\mu \|y^*(t)-y^\mu(t)\|\|y^*(t)\|.
\end{aligned}
\end{equation*}

 We then obtain that the pointwise convergence of $y^\mu$ to $y^*$ as $\mu\downarrow0$, which means that $y^\mu\rightarrow y^*$ w.r.t. $\|\cdot\|_{L^2}$ as $\mu\downarrow0$ and the uniform convergence by the continuity of $y^\mu$ and $y^*$.

%In Sections \ref{se:SAA}-\ref{se:time-stepping}, we assume that Assumption \ref{assumption-convexity} holds. Based on the results in this section, we can extend the corresponding results to the case that $\mathbb{E}[g(t,\mathbf{x},\cdot,\xi)]$ is convex by using the regularization method.

\section{Sample average approximation}\label{se:SAA}

We apply the sample average approximation (SAA) approach to solve problem \eqref{OCDE-1}-\eqref{OCDE-2}. We consider an independent identically distributed (i.i.d) sample of $\xi(\omega)$, which is
denoted by $\{\xi_1,\cdot\cdot\cdot,\xi_\nu\}$, and use the following SAA problem to
approximate problem \eqref{OCDE-1}-\eqref{OCDE-2}:
\begin{eqnarray}
& &\dot{x}(t)=\frac{1}{\nu}\sum_{\ell=1}^\nu f(t,x(t),y(t),\xi_\ell),\,\,\, x(0)=x_0, \label{OCDE-1-SAA}\\
& &\begin{aligned}
&y(t)\in\arg\min_{\mathbf{y}\in\mathbb{R}^m} \frac{1}{\nu}\sum_{\ell=1}^\nu {g}(t,x(t),\mathbf{y},\xi_\ell) \label{OCDE-2-SAA}&\\
& \quad\quad\quad\quad\textrm{s.t.} \,\,\,\, \mathbf{y}\in K^\nu(t,x(t)),\\
 \end{aligned}
\end{eqnarray}
where \[K^\nu(t,x(t))\triangleq\left\{\mathbf{y}\in \mathbb{R}^m: \frac{1}{\nu}\sum_{\ell=1}^\nu (A(\xi_\ell)x(t)+B(\xi_\ell)\mathbf{y}+Q(t,\xi_\ell))\leq 0\right\}.\]

In this paper, by saying a property holds with probability 1 (w.p.1) for sufficiently large $\nu$, we mean that there exists a set $\Omega_0\subset \Omega$ of $\mathcal{P}$-measure zero such that for all $\omega\in \Omega\backslash\Omega_0$ there exists a positive integer $\nu^*(\omega)$ such that the property holds for all $\nu\geq\nu^*(\omega)$.

\begin{theorem}\label{existence-saa}
Suppose that Assumption \ref{assumption-convexity} holds and $\textrm{int} K(0,x_0)\neq\emptyset$.
 Then there exists $T^*>0$ such that problem \eqref{OCDE-1-SAA}-\eqref{OCDE-2-SAA} has a solution $(x^\nu,y^\nu)\in C^1([0,T^*])\times C^0([0,T^*])$ w.p.1 for sufficiently large $\nu$. Moreover, there exists ${\rho}^*>0$ such that
\begin{equation}\label{linear-growth-nu}
\|{y}^\nu(t)\|\leq {\rho}^*(1+|t|+\|{x}^\nu(t)\|), \quad {\rm for }\quad t\in[0,T^*]
\end{equation}
w.p.1 for sufficiently large $\nu$.
\end{theorem}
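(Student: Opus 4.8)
The plan is to transplant the proof of Theorem~\ref{existence-strong-convex} to the sampled problem \eqref{OCDE-1-SAA}--\eqref{OCDE-2-SAA}, making every quantitative estimate uniform in the sample size $\nu$ via the strong law of large numbers (SLLN), and replacing the Hoffman error bound used there by an interior-point estimate that is stable under perturbation of the constraint data. First I would fix $\sigma,\delta>0$ as in the proof of Theorem~\ref{existence-strong-convex}, on which the true feasible map $K$ is continuous and nonempty-valued, and then produce a \emph{common Slater point} for the sampled feasible maps $K^\nu$. Since $\textrm{int}\,K(0,x_0)\neq\emptyset$ and $\mathbb{E}[Q(0,\xi)]=0$ (because $\|Q(0,\xi)\|\le\kappa_Q(\xi)\cdot 0=0$), there are $\bar y_0$ and $\gamma>0$ with $\mathbb{E}[A(\xi)]x_0+\mathbb{E}[B(\xi)]\bar y_0\le-\gamma\mathbf 1$. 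Applying the SLLN to $\frac1\nu\sum_\ell A(\xi_\ell)$ and $\frac1\nu\sum_\ell B(\xi_\ell)$, and a uniform SLLN to $\frac1\nu\sum_\ell Q(\cdot,\xi_\ell)$ over the compact interval $[0,\sigma]$ (legitimate because $Q(\cdot,\xi)$ is continuous and $\sup_{t\in[0,\sigma]}\|Q(t,\xi)\|\le\kappa_Q(\xi)\sigma$ is integrable), I would shrink $\sigma,\delta$ so that $\frac1\nu\sum_\ell(A(\xi_\ell)x+B(\xi_\ell)\bar y_0+Q(t,\xi_\ell))\le-\frac{\gamma}{4}\mathbf 1<0$ for all $(t,x)\in[0,\sigma]\times\mathcal B(x_0,\delta)$, w.p.1 for all large $\nu$. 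Then $\bar y_0\in\textrm{int}\,K^\nu(t,x)$ there, so $K^\nu(t,x)$ is nonempty, closed and convex with least-norm element of norm at most $\|\bar y_0\|$, and the arguments of Theorems~\ref{existence-weak}--\ref{existence-strong-convex} apply to the (now deterministic) sampled data: by Assumption~\ref{assumption-convexity} each $g(t,x,\cdot,\xi_\ell)$ is strongly convex with modulus $\varrho(\xi_\ell)$, so $\frac1\nu\sum_\ell g(t,x,\cdot,\xi_\ell)$ is strongly convex with modulus $\varrho_\nu:=\frac1\nu\sum_\ell\varrho(\xi_\ell)\ge\frac12\mathbb{E}[\varrho(\xi)]>0$ w.p.1 for large $\nu$, whence \eqref{OCDE-2-SAA} has a unique solution $\hat y^\nu(t,x)$ on the region, continuous in $(t,x)$.

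For the linear growth I would follow the second half of the proof of Theorem~\ref{existence-strong-convex}, but with $\|\bar y_0\|$ in place of the Hoffman bound on the least-norm element $\tilde{\mathbf y}^\nu$ of $K^\nu(t,x)$: writing the optimality of $\hat y^\nu$ as a generalized VI over $K^\nu(t,x)$ and using monotonicity of the sampled subdifferential together with strong monotonicity with constant $\varrho_\nu$, one obtains $\|\hat y^\nu(t,x)\|\le\|\bar y_0\|+\varrho_\nu^{-1}\|\tilde{\mathbf z}^\nu\|$ for any $\tilde{\mathbf z}^\nu\in\partial_y\frac1\nu\sum_\ell g(t,x,\tilde{\mathbf y}^\nu,\xi_\ell)$, and $\|\tilde{\mathbf z}^\nu\|$ is bounded by the average of the Lipschitz moduli of the $g(t,x,\cdot,\xi_\ell)$ on the ball $\mathcal B(0,\|\bar y_0\|+1)$, hence by a constant w.p.1 for large $\nu$ (a finite convex function is Lipschitz on a ball with modulus controlled by its oscillation on a slightly larger ball, and $g$ is dominated by an integrable function, so the SLLN applies to these moduli). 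This yields a deterministic $\rho^*$ with $\|\hat y^\nu(t,x)\|\le\rho^*$ on $[0,\sigma]\times\mathcal B(x_0,\delta)$, w.p.1 for large $\nu$; since $1+|t|+\|x\|\ge1$ there, this is exactly \eqref{linear-growth-nu}. Finally, the Lipschitz assumption~\eqref{Lipschitz} and the SLLN applied to $\frac1\nu\sum_\ell\|f(0,0,0,\xi_\ell)\|$ and $\frac1\nu\sum_\ell\kappa_f(\xi_\ell)$ bound $F^\nu(t,x):=\frac1\nu\sum_\ell f(t,x,\hat y^\nu(t,x),\xi_\ell)$ by a constant $M_0$ on the region, w.p.1 for large $\nu$; since $F^\nu$ is continuous, Peano's theorem yields a $C^1$ solution $x^\nu$ of $\dot x=F^\nu(t,x)$, $x(0)=x_0$, on $[0,T^*]$ with $T^*:=\min\{\sigma,\delta/M_0\}$ (independent of $\nu$), along which $\|x^\nu(t)-x_0\|\le M_0 t\le\delta$ keeps the trajectory in the valid region, and setting $y^\nu(t):=\hat y^\nu(t,x^\nu(t))\in C^0([0,T^*])$ gives the asserted solution.

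I expect the main obstacle to be uniformity: the common Slater point $\bar y_0$, the lower bound on $\varrho_\nu$, the upper bound on $\|\tilde{\mathbf z}^\nu\|$, and the bound $M_0$ on $\|F^\nu\|$ must all hold simultaneously, uniformly over a fixed region $[0,\sigma]\times\mathcal B(x_0,\delta)$ in $(t,x)$ and w.p.1 for all sufficiently large $\nu$ (so that $T^*$ and $\rho^*$ are genuinely independent of $\nu$). This is what forces the uniform SLLN over the compact $t$-interval for the $Q$-term and the use of the domination of $g$ to control the sampled subgradients, and it is also the reason the interior-point estimate must replace the Hoffman error bound used for the true problem, whose constant for the sampled matrix $\frac1\nu\sum_\ell B(\xi_\ell)$ need not remain bounded as $\nu\to\infty$.
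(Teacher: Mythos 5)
Your proposal is correct, and its skeleton is the same as the paper's: invoke the strong law of large numbers to get $\textrm{int}\,K^\nu\neq\emptyset$ and strong convexity of the sampled objective on a region $[0,\sigma]\times\mathcal{B}(x_0,\delta)$ w.p.1 for large $\nu$, then rerun the argument of Theorem \ref{existence-strong-convex} (unique continuous $\hat y^\nu(t,x)$, Peano for the ODE, strong-monotonicity estimate for the growth bound). The genuine difference is in the key quantitative step. The paper's proof is terse ("similar with the proof of Theorem \ref{existence-strong-convex}"), which implicitly applies Hoffman's error bound to the \emph{sampled} polyhedron $K^\nu(t,x)$; for the conclusion \eqref{linear-growth-nu} with $\rho^*$ and $T^*$ independent of $\nu$ one then needs the Hoffman constants of the matrices $\frac1\nu\sum_\ell(A(\xi_\ell),B(\xi_\ell))$ to be uniformly bounded in $\nu$, a point the paper does not address (Hoffman constants are not continuous in the data). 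You instead construct a common Slater point $\bar y_0$ (using $Q(0,\xi)=0$ a.e., which indeed follows from $\|Q(t,\xi)\|\le\kappa_Q(\xi)t$) and a uniform SLLN for $Q(\cdot,\xi)$ on $[0,\sigma]$, so that $\bar y_0\in\textrm{int}\,K^\nu(t,x)$ with a fixed margin on the whole region w.p.1 for large $\nu$; this bounds the least-norm feasible point by $\|\bar y_0\|$ and, combined with the averaged strong-convexity modulus and the domination-based bound on sampled subgradients, yields a constant $\rho^*$ and a $\nu$-independent $T^*=\min\{\sigma,\delta/M_0\}$. What each route buys: Hoffman gives a genuine linear-growth bound in $(|t|,\|x\|)$ wherever $K^\nu$ is nonempty, but with a $\nu$-dependence that must be argued away; your interior-point estimate gives only a constant bound on the fixed bounded region, which is all that \eqref{linear-growth-nu} requires there (since $1+|t|+\|x\|\ge1$), and it makes the uniformity in $\nu$ explicit. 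Note only that extracting a strict Slater point with margin $\gamma$ from $\textrm{int}\,K(0,x_0)\neq\emptyset$ tacitly excludes the degenerate case of a zero row of $\mathbb{E}[B(\xi)]$ with active right-hand side; the paper's own stability argument in Theorem \ref{existence-weak} relies on the same reading of the interiority assumption, so this is not a gap relative to the paper.
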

\begin{proof}

By $\textrm{int} K(0,x_0)\neq\emptyset$ and the strong Law of Large Number, we can conclude $\textrm{int} K^\nu(0,x_0)\neq\emptyset$ w.p.1 for sufficiently large $\nu$.

Similar with the proof of Theorem \ref{existence-weak}, we can also conclude that there are $\sigma_1>0$ and $\delta_1>0$ such that $\textrm{int} K^\nu(t,x(t))\neq\emptyset$ for any $(t,x(t))\in [0,\sigma_1]\times \mathcal{B}(x_0,\delta_1)$ w.p.1 for sufficiently large $\nu$.

 Assumption \ref{assumption-convexity} implies that  $\frac{1}{\nu}\sum_{\ell=1}^\nu g(t,x(t),\cdot,\xi_\ell)$ is  strongly convex w.p.1 for sufficiently large $\nu$.
 Similar with the proof of Theorem \ref{existence-strong-convex}, we obtain our results.
\end{proof}

\begin{theorem}\label{convergence-saa}
Suppose that Assumption \ref{assumption-convexity} holds and $\textrm{int} K(0,x_0)\neq\emptyset$.
Let $(x^\nu,y^\nu)\in C^1([0,T^*])\times C^0([0,T^*])$ be a solution of problem \eqref{OCDE-1-SAA}-\eqref{OCDE-2-SAA}. Then there are $\bar{T}$ with $T^*\geq \bar{T} >0$ and a sequence $\{\nu_k\}_{k=1}^\infty$ with $\nu_k \rightarrow\infty$ such that $x^{\nu_k}\rightarrow x^*$ as $k\rightarrow\infty$  w.p.1 uniformly over $[0,\bar{T}]$ and $y^{\nu_k}\rightarrow y^*$ w.p.1 as $k\rightarrow\infty$ w.r.t. $\|\cdot\|_{L^2}$ in $\mathcal{Y}_{\bar{T}}$, where $(x^*,y^*)$ is a weak solution of \eqref{OCDE-1}-\eqref{OCDE-2} over $[0,\bar{T}]$.
\end{theorem}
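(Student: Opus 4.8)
The plan is to establish a uniform (in $\nu$) a priori bound and equicontinuity for the SAA solutions $\{x^\nu\}$ on a common interval, extract convergent subsequences via Arzel\`a--Ascoli and Alaoglu, and then pass to the limit in the integral equation and in the optimality condition of the SAA optimization problem, mirroring the argument of Theorem \ref{convergence-regularization} but now with the sampling error handled by the strong law of large numbers.

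First I would fix the common time horizon. By Theorem \ref{existence-saa} there are $\sigma_1, \delta_1 > 0$ and a full-measure set of $\omega$ such that for large $\nu$ the SAA problem has a classic solution on $[0,\sigma_1]$ with $\hat y^\nu(t,x(t))$ the unique minimizer over $K^\nu(t,x(t))$ on $[0,\sigma_1]\times\mathcal{B}(x_0,\delta_1)$, and with the linear growth bound \eqref{linear-growth-nu}. The key uniformity I need is that the constant $\rho^*$ can be taken independent of $\nu$ w.p.1 for large $\nu$: this follows because the Hoffman constant in the proof of Theorem \ref{existence-strong-convex} depends only on $\mathbb{E}[A(\xi)],\mathbb{E}[B(\xi)]$, which are approximated uniformly on compacts by $\frac1\nu\sum A(\xi_\ell),\frac1\nu\sum B(\xi_\ell)$ by the strong law of large numbers, and similarly the strong monotonicity constant converges to $\mathbb{E}[\varrho(\xi)]>0$, and $\|\frac1\nu\sum Q(t,\xi_\ell)\|$ stays bounded by $\kappa$-type bounds. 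Granting this, $\|y^\nu(t)\|\le\rho^*(1+|t|+\|x^\nu(t)\|)$ feeds into $\dot x^\nu(t)=\frac1\nu\sum f(t,x^\nu(t),y^\nu(t),\xi_\ell)$; using the linear growth of $f$ (uniform in $\nu$ via $\frac1\nu\sum\kappa_f(\xi_\ell)\to\mathbb{E}[\kappa_f(\xi)]$) and Gr\"onwall's inequality (\cite[Lemma 2.6]{chen2013}) gives a uniform bound $\|x^\nu\|_s \le M$ on a possibly shortened interval $[0,\bar T]$ with $\bar T\le T^*$ chosen so that $x^\nu(t)$ stays in $\mathcal{B}(x_0,\delta_1)$. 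Then $\{\dot x^\nu\}$ is uniformly bounded, so $\{x^\nu\}$ is equicontinuous, and Arzel\`a--Ascoli yields a subsequence $x^{\nu_k}\to x^*$ uniformly; $\{y^{\nu_k}\}$ is uniformly $L^2$-bounded, so by Alaoglu (reflexivity of $\mathcal{Y}_{\bar T}$) a further subsequence converges weakly to some $y^*$.

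Next I would pass to the limit. For the ODE part, $x^{\nu_k}(t)=x_0+\int_0^t\frac1{\nu_k}\sum_\ell f(\tau,x^{\nu_k}(\tau),y^{\nu_k}(\tau),\xi_\ell)\,d\tau$; split the integrand as $\frac1{\nu_k}\sum_\ell f(\tau,x^{\nu_k},y^{\nu_k},\xi_\ell) - \mathbb{E}[f(\tau,x^{\nu_k},y^{\nu_k},\xi)]$ plus $\mathbb{E}[f(\tau,x^{\nu_k},y^{\nu_k},\xi)]$; the first term goes to zero w.p.1 uniformly by a uniform law of large numbers (using the Lipschitz bound \eqref{Lipschitz} and the common compact range of the arguments), and for the second, since $f$ is affine in $y$ (as used throughout), $\mathbb{E}[f(\tau,\cdot,\cdot,\xi)]$ is weakly continuous in $y$ along the strongly convergent $x^{\nu_k}$, so the integral converges to $\int_0^t\mathbb{E}[f(\tau,x^*(\tau),y^*(\tau),\xi)]\,d\tau$, giving the integral equation for $(x^*,y^*)$. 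For the inclusion $y^*(t)\in\mathcal{S}(t,x^*(t))$, I would follow the epigraphical-convergence route: define the residual map $\Phi^\nu$ analogous to \eqref{phi-mu} with $K^\nu$ and $\frac1\nu\sum g(\cdot,\xi_\ell)$ in place of their expectations, prove a SAA analogue of Lemma \ref{epi-lemma} showing $\|\Phi^\nu\|_{U_1}\to^{epi}\|\Phi\|_{U_1}$ w.p.1 (the optimal value functions $g^\nu_{\min}$ converge uniformly to $g_{\min}$ by a uniform LLN plus the uniform boundedness and continuity of the SAA solution sets, using \cite[Example 9.35]{Rockafellar} for the Lipschitz stability of $K^\nu(t,\cdot)$ and the Hausdorff convergence $\mathbb{H}(K^\nu(t,x),K(t,x))\to0$), and conclude that $\|\Phi^{\nu_k}(x^{\nu_k},y^{\nu_k})\|=0$ together with $y^{\nu_k}(t)\in K^{\nu_k}(t,x^{\nu_k}(t))$ forces $\|\Phi(x^*,y^*)\|_{U_1}=0$ and $y^*(t)\in K(t,x^*(t))$ in the limit, i.e.\ $(x^*,y^*)$ is a weak solution.

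The main obstacle I expect is the double limit bookkeeping in the weak-convergence step: because $y^{\nu_k}$ only converges weakly in $L^2$, I cannot directly evaluate $\mathbb{E}[g(t,x^*(t),y^{\nu_k}(t),\xi)]$ pointwise, so the $\liminf$-inequality in the epigraphical convergence must be obtained through convexity — using that $\mathbb{E}[g(t,x(t),\cdot,\xi)]$ is convex and (lower semi)continuous hence weakly lower semicontinuous in $y$, integrated in $t$ via Fatou — while the $\limsup$-inequality needs a recovery sequence, for which $(x^{\nu_k},y^{\nu_k})$ itself works once the uniform value-function convergence is in hand. A secondary technical point is ensuring all the "w.p.1 for large $\nu$" statements (nonemptiness of $\mathrm{int}\,K^\nu$, the uniform Hoffman/monotonicity constants, the uniform LLN for $f$ and $g$) hold simultaneously off a single $\mathcal{P}$-null set, which is routine since a countable intersection of full-measure sets is full-measure. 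I would state the uniform LLN as a lemma (Lipschitz in the decision variables, measurable, dominated — so the classical uniform strong law applies on compacts) and invoke it wherever a sample average must be replaced by an expectation uniformly in $(t,x,y)$ over the relevant compact set.
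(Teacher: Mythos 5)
There is a genuine gap. The theorem you are proving asserts \emph{strong} convergence $y^{\nu_k}\rightarrow y^*$ w.r.t.\ $\|\cdot\|_{L^2}$, but your argument only extracts a weak limit of $\{y^{\nu_k}\}$ via Alaoglu and never upgrades it. The upgrade is exactly where the hypothesis of Assumption \ref{assumption-convexity} must be used: strong convexity makes the SAA minimizer $\hat y^{\nu}(t,x)$ and the true minimizer $\hat y(t,x)$ unique and continuous on a common neighborhood $[0,\tilde\sigma]\times\mathcal{B}(x_0,\tilde\delta)$, and SAA consistency results (uniform LLN for $\frac1\nu\sum_\ell g(t,x,\cdot,\xi_\ell)$ on a compact set, $\mathbb{D}(K^\nu,K)\rightarrow 0$, and \cite[Theorem 5.5]{Shapiro2014}) give pointwise convergence $\hat y^{\nu}(t,x(t))\rightarrow\hat y(t,x(t))$ w.p.1; combined with the uniform bound \eqref{linear-growth-nu} and dominated convergence this yields $L^2$ convergence, and then $y^{\nu_k}=\hat y^{\nu_k}(\cdot,x^{\nu_k})\rightarrow \hat y(\cdot,x^*)=:y^*$ strongly. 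Your plan bypasses this single-valuedness/consistency step entirely, so it cannot deliver the convergence mode claimed in the statement; the weak-limit-plus-epigraphical-residual route (your SAA analogue of Lemma \ref{epi-lemma}) would at best identify a weak solution associated with a weak cluster point, which is a weaker conclusion than the theorem.

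Second, your passage to the limit in the integral equation leans on ``$f$ is affine in $y$'' to get weak continuity of $y\mapsto\mathbb{E}[f(\tau,x,y,\xi)]$. Affineness of $f(t,\mathbf{x},\cdot,\xi)$ is part of Assumption \ref{assumption-interior-point}, which is \emph{not} among the hypotheses of this theorem (only Assumption \ref{assumption-convexity} and $\mathrm{int}\,K(0,x_0)\neq\emptyset$ are assumed), so that step is unjustified as written. Once strong $L^2$ convergence of $y^{\nu_k}$ is in hand, this problem disappears: the Lipschitz bound \eqref{Lipschitz} gives
$\bigl\|\int_0^t\frac1{\nu_k}\sum_\ell f(\tau,x^{\nu_k},y^{\nu_k},\xi_\ell)d\tau-\int_0^t\mathbb{E}[f(\tau,x^*,y^*,\xi)]d\tau\bigr\|
\le \frac1{\nu_k}\sum_\ell\kappa_f(\xi_\ell)\bigl(\bar T\|x^{\nu_k}-x^*\|_s+\sqrt{\bar T}\,\|y^{\nu_k}-y^*\|_{L^2}\bigr)$ plus a uniform-LLN term for $f$ evaluated at $(x^*,y^*)$, all of which vanish w.p.1. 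Your a priori bounds, Gr\"onwall/Arzel\`a--Ascoli step, and the remark about collecting the ``w.p.1'' events on a single null set are fine and match the paper; the missing idea is the consistency of the (unique) SAA solution map under strong convexity, which is both what forces strong $L^2$ convergence and what removes the need for any affineness of $f$.
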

\begin{proof}
Since $(x^\nu,y^\nu)\in C^1([0,T^*])\times C^0([0,T^*])$ is a solution of problem \eqref{OCDE-1-SAA}-\eqref{OCDE-2-SAA}, by the linear growth condition \eqref{linear-growth-nu}, we obtain that there exists $\hat{\rho}_f>0$ such that
\[\|x^\nu(t)\|\leq \|x_0\|+\hat{\rho}_f\int_{0}^t(1+\|x^\nu(\tau)\|)d\tau\]
holds w.p.1 for sufficiently large $\nu$, which implies that $\|x^\nu\|_s\leq (1+\|x_0\|) \exp(\hat {\rho}_fT^*)-1$
w.p.1 for sufficiently large $\nu$.
Hence, we obtain that $\{x^\nu\}$ is uniformly bounded w.p.1 for sufficiently large $\nu$ and then so is $\{\dot{x}^\nu\}$, which means that $\{x^\nu\}$ is equicontinuous over $[0,T^*]$ w.p.1 for sufficiently large $\nu$. By Arzel\'{a}-Ascoli theorem, there exists a sequence $\{\nu_k\}$ such that $\{x^{\nu_k}\}$ is convergent to a point $x^*\in \mathcal{X}_{T^*}$  as $\nu_k\rightarrow\infty$ w.p.1 uniformly over $[0,T^*]$.

 Let $\hat{y}(t,x(t))$ and $\hat{y}^\nu(t,x(t))$ denote the optimal solutions of optimization problems \eqref{OCDE-2} and \eqref{OCDE-2-SAA} with $t$ and $x(t)$, respectively. Theorems \ref{existence-strong-convex} and \ref{existence-saa} imply that there are $\tilde{\sigma}>0$ and $\tilde{\delta}>0$ such that $\hat{y}$ and $\hat{y}^\nu$ are bounded and $\textrm{int} K(t,x(t))\neq\emptyset$ over $(t,x(t))\in[0,\tilde{\sigma}]\times \mathcal{B}(x_0,\tilde{\delta})$.
 It implies that there exists a compact set $\mathcal{C}\subseteq\mathbb{R}^m$ such that $\hat{y}(t,x(t))\in \mathcal{C}$ and $\hat{y}^\nu(t,x(t))\in \mathcal{C}$ w.p.1 for sufficiently large $\nu$.
 Following from \cite[Theorem 7.53]{Shapiro2014}, we obtain that for any given $t\in\mathbb{R}_+$ and $x(t)\in \mathbb{R}^n$, $\frac{1}{\nu}\sum_{\ell=1}^\nu g(t,x(t),\mathbf{y},\xi_\ell)$ converges to $\mathbb{E}[g(t,x(t),\mathbf{y},\xi)]$ w.p.1 uniformly on $\mathbf{y}\in\mathcal{C}$ as $\nu\rightarrow\infty$.
 In addition, following from the strong Large Law of Number, we can obtain that $\mathbb{D}(K^\nu(t,x(t)), K(t,x(t)))\rightarrow 0$ w.p.1 as $\nu\rightarrow\infty$ for any $t\in\mathbb{R}_+$ and $x(t)\in \mathbb{R}^n$, which means that for any fixed $t$ and $x(t)$ if $\mathbf{y}^\nu\in K^\nu(t,x(t))$ and $\mathbf{y}^\nu$ converges w.p.1 to a point $\mathbf{y}$, then $\mathbf{y}\in K(t,x(t))$. According to \cite[Remark 8]{Shapiro2014}, we conclude that there exists a sequence $\{y^\nu(t,x(t))\}$ with $y^\nu(t,x(t))\in  K^\nu(t,x(t))$ such that $y^\nu(t,x(t))\rightarrow \hat{y}(t,x(t))$ w.p.1 as $\nu\rightarrow\infty$ since $\textrm{int} K(t,x(t))\neq\emptyset$ for any $(t,x(t))\in [0,\tilde{\sigma}]\times\mathcal{B}(x_0,\tilde{\delta})$.  Therefore, following from \cite[Theorem 5.5]{Shapiro2014}, we can obtain that $\hat{y}^\nu(t,x(t))$ converges to $\hat{y}(t,x(t))$ w.p.1 as $\nu\rightarrow\infty$ on $(t,x(t))\in[0,\tilde{\sigma}]\times \mathcal{B}(x_0,\tilde{\delta})$.
 According to Lebesgue Dominated Convergence Theorem and \eqref{linear-growth-nu},
we then conclude that there exists $\tilde{T}>0$ such that $\hat{y}^\nu(\cdot,x)$ converges to $\hat{y}(\cdot,x)$ w.p.1 w.r.t. $\|\cdot\|_{L^2}$ as $\nu\rightarrow\infty$, where $x$ is continuously differentiable over $t\in[0,\tilde{T}]$.

Let $\bar{T}=\min\{T^*,\tilde{T}\}$.
It is clear that $y^\nu(t)=\hat{y}^\nu(t,x^\nu(t))$ and $\hat{y}^{\nu_k}(\cdot,x^{\nu_k})$ converges to $\hat{y}(\cdot,x^*)$ w.p.1 w.r.t. $\|\cdot\|_{L^2}$ as $k\rightarrow\infty$, following from the continuity of $\hat{y}^\nu$ and $\hat{y}$. Denote $y^*(t)=\hat{y}(t,x^*(t))$. Then  taking $\{\nu_k\}$ with $\nu_k\rightarrow\infty$, for any $t\in[0,\bar{T}]$, we have
\begin{eqnarray*}
\begin{aligned}
& \left\|\int_{0}^t \frac{1}{{\nu_k}}\sum_{\ell=1}^{\nu_k} f(\tau,x^{\nu_k}(\tau),y^{\nu_k}(\tau),\xi_\ell) d\tau - \int_{0}^t \mathbb{E}[f(\tau,x^*(\tau),y^*(\tau),\xi)]d\tau\right\| &\\
%& \leq \int_{0}^t\left\| \frac{1}{{\nu_k}}\sum_{\ell=1}^{\nu_k} f(\tau,x^{\nu_k}(\tau),y^{\nu_k}(\tau),\xi_\ell) - \mathbb{E}[f(\tau,x^*(\tau),y^*(\tau),\xi)]\right\|d\tau &\\
&  \leq  \frac{1}{\nu_k}\sum_{\ell=1}^{\nu_k}\kappa_f(\xi_\ell)\left(\bar{T}\|x^{\nu_k}-x^*\|_s +\sqrt{\bar{T}}\|y^{\nu_k}-y^*\|_{L^2}\right)+\mathcal{L}_{\bar{T}},
\end{aligned}
\end{eqnarray*}
where
 \[\mathcal{L}_{\bar{T}}=\bar{T}\left\| \frac{1}{{\nu_k}}\sum_{\ell=1}^{\nu_k} f(\cdot,x^*,y^*,\xi_\ell) - \mathbb{E}[f(\cdot,x^*,y^*,\xi)]\right\|_s.\]
Similarly, by \cite[Theorem 7.53]{Shapiro2014}, we obtain that $\frac{1}{\nu}\sum_{\ell=1}^\nu f(t,x^*(t),y^*(t),\xi_\ell)$ converges to $\mathbb{E}[f(t,x^*(t),y^*(t),\xi)]$ w.p.1 uniformly on $t\in[0,\bar{T}]$ as $\nu\rightarrow\infty$.
Therefore, we can conclude that
\[x^*(t)=x_0+\int_{0}^t \mathbb{E}[f(\tau,x^*(\tau),y^*(\tau),\xi)]d\tau\]
 by $x^{\nu_k}(t)=x_0+\int_{0}^t \frac{1}{\nu_k}\sum_{\ell=1}^{\nu_k} f(\tau,x^{\nu_k}(\tau),y^{\nu_k}(\tau),\xi)d\tau.$ By $x^*\in \mathcal{X}_{\bar{T}}$, we obtain $y^*\in \mathcal{Y}_{\bar{T}}$, which means that $(x^*,y^*)$ is a weak solution of problem \eqref{OCDE-1}-\eqref{OCDE-2} over $[0,\bar{T}]$.
\end{proof}

For the case that $\mathbb{E}[g(t,\mathbf{x},\cdot,\xi)]$ is convex, we can choose a measurable function $\hat{\varrho}: \Xi\rightarrow \mathbb{R}_{++}$ with $0< \mathbb{E}[\tilde{\rho}(\xi)]<\infty$ and consider the regularized function
\[\tilde{g}(t,x(t),y(t),\xi)=g(t,x(t),y(t),\xi)+\frac{\mu}{2}\tilde{\rho}(\xi)\|y(t)\|^2.\]
Then Assumption \ref{assumption-convexity} holds for $\tilde{g}$ with $\mu\tilde{\rho}$ and $\mu>0$.
We apply SAA method to \eqref{OCDE-1}-\eqref{OCDE-2} with $\tilde{g}$  and obtain
\begin{equation}\label{OCDE-2-SAA-regularization}
y^\nu_\mu(t)=\arg\min_{\mathbf{y}\in K^\nu(t,x(t))}\frac{1}{\nu}\sum_{\ell=1}^\nu {g}(t,x(t),\mathbf{y},\xi_\ell)+\frac{\mu}{2\nu} \sum^\nu_{\ell=1}\tilde{\rho}(\xi_\ell) \|\mathbf{y} \|^2.
\end{equation}
According to Theorems \ref{convergence-regularization} and \ref{convergence-saa}, we can obtain the following result.
\begin{theorem}\label{convergence-saa-regularization}
Suppose that Assumption \ref{assumption-interior-point} holds.
Let $(x_\mu^\nu,y_\mu^\nu)$ be a solution of problem \eqref{OCDE-1-SAA} with \eqref{OCDE-2-SAA-regularization} for some $\mu>0$ and $\nu>0$. Then there are $\check{T}>0$, $(x^*, y^*)\in C^1([0,\check{T}])\times C^0([0,\check{T}])$, a sequence $\{\mu_k\}_{k=1}^\infty$ with $\mu_k \downarrow0$ and a sequence $\{\nu_k\}_{k=1}^\infty$ with $\nu_k \rightarrow\infty$
 such that
  \[\lim_{\mu_k\downarrow0}\lim_{\nu_k\rightarrow\infty}\|x_{\mu_k}^{\nu_k}-x^*\|_s=0,\,\,\,w.p.1\]
and $y^{\nu_k}_{\mu_k}\rightarrow y^*$ weakly w.p.1 in $\mathcal{Y}_{\check{T}}$ by the order of $\mu_k \downarrow0$ and $\nu_k \rightarrow\infty$. If
   \[\lim_{\mu_k\downarrow0}\lim_{\nu_k\rightarrow\infty}\|y_{\mu_k}^{\nu_k}-y^*\|_{L^2}=0,\,\,\,w.p.1\]
   then $(x^*,y^*)$ is a weak solution of \eqref{OCDE-1}-\eqref{OCDE-2} over $[0,\check{T}]$.
\end{theorem}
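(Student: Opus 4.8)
The plan is to take the two limits in succession, in the order the statement prescribes: holding the regularization parameter $\mu$ fixed, push the sample size $\nu_k\to\infty$ by Theorem~\ref{convergence-saa}, and then push $\mu_k\downarrow0$ by Theorem~\ref{convergence-regularization}. The point that makes both theorems applicable is the one recorded just before the statement: the regularized integrand $\tilde{g}(t,x,y,\xi)=g(t,x,y,\xi)+\frac{\mu}{2}\tilde{\rho}(\xi)\|y\|^2$ obeys Assumption~\ref{assumption-convexity} with modulus $\mu\tilde{\rho}(\cdot)$ and retains all the blanket hypotheses (the extra term is quadratic, hence convex and continuous in $y$, and is integrably dominated on bounded sets since $\mathbb{E}[\tilde{\rho}(\xi)]<\infty$), while $f$ and the feasible map $K$ are left untouched. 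Consequently \eqref{OCDE-1-SAA} with \eqref{OCDE-2-SAA-regularization} is exactly the SAA of \eqref{OCDE-1}-\eqref{OCDE-2} written for $\tilde{g}$, and its expected-value counterpart \eqref{OCDE-1} with \eqref{OCDE-2-regularization} (effective parameter $\frac{\mu}{2}\mathbb{E}[\tilde{\rho}(\xi)]$) is the problem treated in Theorem~\ref{convergence-regularization}.

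First I would secure a single horizon $\check{T}>0$ on which everything lives, uniformly over small $\mu$ and large $\nu$. The Tikhonov bound \eqref{linear-growth-mu} has an exact SAA analogue: repeating the argument of the Proposition preceding Theorem~\ref{existence-regularization} for the sample objective $\frac{1}{\nu}\sum_{\ell=1}^\nu g(t,x(t),\cdot,\xi_\ell)$ and its regularization gives $\|\hat{y}^\nu_\mu(t,x(t))\|\le\textrm{dist}(0,\mathcal{S}^\nu(t,x(t)))$, where $\mathcal{S}^\nu(t,x(t))$ is the solution set of \eqref{OCDE-2-SAA}. By the strong law of large numbers applied to the constraint data and, through a standard SAA level-boundedness argument, to the sample optimal-value function, the right-hand side is bounded by a constant $\rho_\alpha$ independent of $\mu$ and $\nu$ on a fixed set $[0,\sigma_1]\times\mathcal{B}(x_0,\delta_1)$, w.p.1 for large $\nu$. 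Inserting this parameter-free bound into the linear-growth estimate for $x^\nu_\mu$ and into the horizon constructions of Theorems~\ref{existence-saa} and \ref{existence-regularization} (exactly as \eqref{linear-growth-mu} produced $\hat{T}(\mu)\ge\hat{T}_0$ in the proof of Theorem~\ref{existence-regularization}) yields such a $\check{T}$, on which $(x^\nu_\mu,y^\nu_\mu)\in C^1([0,\check{T}])\times C^0([0,\check{T}])$ exists w.p.1 for all sufficiently small $\mu>0$ and sufficiently large $\nu$.

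Then, for each term of a sequence $\mu_k\downarrow0$, Theorem~\ref{convergence-saa} applied to $\tilde{g}$ produces a subsequence $\nu_k\to\infty$ along which $x_{\mu_k}^{\nu_k}\to x^{\mu_k}$ uniformly on $[0,\check{T}]$ w.p.1 and $y_{\mu_k}^{\nu_k}\to y^{\mu_k}$ w.r.t.\ $\|\cdot\|_{L^2}$ w.p.1, where $(x^{\mu_k},y^{\mu_k})$ is a weak solution of \eqref{OCDE-1} with \eqref{OCDE-2-regularization}. Since \eqref{OCDE-2-regularization} is strongly convex, its minimizer $\hat{y}^{\mu_k}(t,\cdot)$ is unique and continuous in $(t,x(t))$, so the absolute continuity of $x^{\mu_k}$ forces $y^{\mu_k}(t)=\hat{y}^{\mu_k}(t,x^{\mu_k}(t))$ to coincide a.e.\ with a continuous function; hence $x^{\mu_k}\in C^1([0,\check{T}])$ and $(x^{\mu_k},y^{\mu_k})$ is in fact a classic solution of \eqref{OCDE-1} with \eqref{OCDE-2-regularization}. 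Applying Theorem~\ref{convergence-regularization} to the family $\{(x^{\mu_k},y^{\mu_k})\}$ on $[0,\check{T}]$ and passing to a further subsequence, $x^{\mu_k}\to x^*$ uniformly and $y^{\mu_k}\to y^*$ weakly in $\mathcal{Y}_{\check{T}}$. The triangle inequalities
\[
\|x_{\mu_k}^{\nu_k}-x^*\|_s\le\|x_{\mu_k}^{\nu_k}-x^{\mu_k}\|_s+\|x^{\mu_k}-x^*\|_s
\]
and
\[
\|y^{\mu_k}-y^*\|_{L^2}\le\|y^{\mu_k}-y_{\mu_k}^{\nu_k}\|_{L^2}+\|y_{\mu_k}^{\nu_k}-y^*\|_{L^2}
\]
then give $\lim_{\mu_k\downarrow0}\lim_{\nu_k\to\infty}\|x_{\mu_k}^{\nu_k}-x^*\|_s=0$ and the asserted iterated weak convergence of $y_{\mu_k}^{\nu_k}$ to $y^*$ (recall $\|\cdot\|_{L^2}$-convergence implies weak convergence), all w.p.1; and under the hypothesis $\lim_{\mu_k\downarrow0}\lim_{\nu_k\to\infty}\|y_{\mu_k}^{\nu_k}-y^*\|_{L^2}=0$ w.p.1, letting $\nu_k\to\infty$ first in the second inequality gives $y^{\mu_k}\to y^*$ w.r.t.\ $\|\cdot\|_{L^2}$, so part~(i) of Theorem~\ref{convergence-regularization} identifies $(x^*,y^*)$ as a weak solution of \eqref{OCDE-1}-\eqref{OCDE-2} on $[0,\check{T}]$.

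The main obstacle I anticipate is not a single analytic estimate but the uniformity and measure-theoretic bookkeeping: one must arrange that the common horizon $\check{T}$ and every ``w.p.1'' assertion survive simultaneously for all the countably many $\mu_k,\nu_k$ entering the iterated limit --- a countable union of null sets is harmless, but one must verify that the constants in play are genuinely free of $\mu$ and $\nu$, which is precisely what the SAA form of \eqref{linear-growth-mu} and the uniform bound $\rho_\alpha$ deliver --- and one must confirm that the limit extracted from the SAA solves the $\mu_k$-regularized problem itself rather than a relaxation of it, for which the invariance of $f$ and $K$ under the regularization together with $\mathbb{H}(K^\nu(t,\cdot),K(t,\cdot))\to 0$ w.p.1 are exactly what is needed.
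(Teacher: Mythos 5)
Your proposal is correct and follows essentially the same route as the paper, which obtains the result precisely by combining Theorem~\ref{convergence-saa} (inner limit $\nu_k\rightarrow\infty$ for the SAA of the regularized problem, whose integrand $\tilde{g}$ satisfies Assumption~\ref{assumption-convexity}) with Theorem~\ref{convergence-regularization} (outer limit $\mu_k\downarrow0$). Your additional care about a $\mu$- and $\nu$-independent horizon via the SAA analogue of \eqref{linear-growth-mu}, the upgrade of the intermediate weak solutions to classic ones, and the null-set bookkeeping simply fills in details the paper leaves implicit.
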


\section{Time-stepping method}\label{se:time-stepping}

We now adopt the time-stepping method for solving problem \eqref{OCDE-1-SAA}-\eqref{OCDE-2-SAA} with a fixed sample $\{\xi_1,\ldots, \xi_\nu\}$, which uses a finite-difference formula to approximate the time derivative $\dot{x}$. For a fixed $\bar{T}$ in Theorem \ref{convergence-saa}, it begins with the division of the time interval $[0,\bar{T}]$ into $N$ subintervals for a fixed step size $h={\bar{T}}/{N}=t_{i+1}-t_{i}$ where $i=0,\cdot\cdot\cdot,N-1$. Inspired by the DVI-specific time-stepping approach in \cite{pangDVI}, we propose to solve the optimization problem \eqref{OCDE-2-SAA} independently of the first equation \eqref{OCDE-1-SAA}. This method is different with the time-stepping method which is usually adopted in \cite{chen2022,luo-chen-2023,luo2021}. Therefore,
 starting from $\mathbf{x}_0^\nu=x_0$, we compute two finite sets of vectors
$
\{\mathbf{x}^{\nu}_{1},\mathbf{x}^{\nu}_{2},\cdot\cdot\cdot,\mathbf{x}^{\nu}_{N}\}\subset \mathbb{R}^n$ and $ \{\mathbf{y}^{\nu}_{1},\mathbf{y}^{\nu}_{2},\cdot\cdot\cdot,\mathbf{y}^{\nu}_{N}\}\subset\mathbb{R}^m
$
in the following manner for $i=0,\cdot\cdot\cdot,N-1$:
\begin{eqnarray}
& &\mathbf{x}_{i+1}^\nu=\mathbf{x}_{i}^\nu+\frac{h}{\nu}\sum_{\ell=1}^\nu f(t_{i+1},\mathbf{x}_{i+1}^\nu,\mathbf{y}_{i+1}^\nu,\xi_\ell), \label{OCDE-1-SAA-time}\\
& &\begin{aligned}
&\mathbf{y}_{i+1}^\nu= \arg\min_{\mathbf{y}\in\mathbb{R}^m} \frac{1}{\nu}\sum_{\ell=1}^\nu {g}(t_{i+1},\mathbf{x}_{i}^\nu,\mathbf{y},\xi_\ell) \label{OCDE-2-SAA-time}&\\
&\quad\quad\quad\quad\textrm{s.t.} \,\,\,\, \mathbf{y}\in K^\nu(t_{i+1},\mathbf{x}_{i}^\nu).\\
 \end{aligned}
\end{eqnarray}

\begin{theorem}\label{existence-saa-time-stepping}
Suppose that Assumption \ref{assumption-convexity} holds and $\textrm{int} K(0,x_0)\neq\emptyset$. Then problem \eqref{OCDE-1-SAA-time}-\eqref{OCDE-2-SAA-time} has a unique solution $\{\mathbf{x}^{\nu}_i,\mathbf{y}^{\nu}_i\}_{i=1}^N$ w.p.1 for sufficiently large $\nu$ and sufficiently small $h$. Moreover, there exists $\hat{\rho}>0$ such that for any $i\in\{0,\cdot\cdot\cdot,N-1\}$,
\begin{equation*}\label{linear-growth-nu-h}
\|\mathbf{y}^{\nu}_{i+1}\|\leq \hat{\rho}(1+\|\mathbf{x}^{\nu}_i\|)
\end{equation*}
holds w.p.1 for sufficiently large $\nu$ and $N$.
\end{theorem}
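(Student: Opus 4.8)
The plan is to combine the fixed-point existence argument for the implicit-Euler iterate of the ODE with the strong-convexity structure of the inner problem, mirroring the proofs of Theorems \ref{existence-saa} and \ref{existence-strong-convex}. First I would set up the induction on $i$: assuming $\mathbf{x}_i^\nu$ is defined and lies in $\mathcal{B}(x_0,\delta_1)$ (the ball on which, by the proof of Theorem \ref{existence-saa}, we have $\mathrm{int}\,K^\nu(t,x(t))\neq\emptyset$ and $\frac{1}{\nu}\sum_\ell g(t,x(t),\cdot,\xi_\ell)$ strongly convex w.p.1 for large $\nu$), the inner problem \eqref{OCDE-2-SAA-time} has a \emph{unique} minimizer $\mathbf{y}_{i+1}^\nu$ by strong convexity of the objective over the nonempty polyhedron $K^\nu(t_{i+1},\mathbf{x}_i^\nu)$; this handles the $\mathbf{y}$-component and its uniqueness immediately, since $\mathbf{y}_{i+1}^\nu$ depends only on $\mathbf{x}_i^\nu$, not on $\mathbf{x}_{i+1}^\nu$.

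Next I would establish the linear growth bound $\|\mathbf{y}_{i+1}^\nu\|\le\hat\rho(1+\|\mathbf{x}_i^\nu\|)$ by transcribing verbatim the Hoffman-error-bound argument in the proof of Theorem \ref{existence-strong-convex}: let $\tilde{\mathbf{y}}$ be the least-norm element of the polyhedron $K^\nu(t_{i+1},\mathbf{x}_i^\nu)$, so $\|\tilde{\mathbf{y}}\|\le\alpha(1+|t_{i+1}|+\|\mathbf{x}_i^\nu\|)$ with $\alpha$ uniform in $i$ and (w.p.1) in $\nu$, since the constraint data $\frac{1}{\nu}\sum_\ell A(\xi_\ell)$, $\frac{1}{\nu}\sum_\ell B(\xi_\ell)$, $\frac{1}{\nu}\sum_\ell Q(t,\xi_\ell)$ converge and $\|Q(t,\xi)\|\le\kappa_Q(\xi)t$; then the strong monotonicity of $\partial_y\frac{1}{\nu}\sum_\ell g(t_{i+1},\mathbf{x}_i^\nu,\cdot,\xi_\ell)$ with constant $\frac{1}{\nu}\sum_\ell\varrho(\xi_\ell)\ge\frac{1}{2}\mathbb{E}[\varrho(\xi)]>0$ (w.p.1 for large $\nu$) together with the variational inequality characterizing $\mathbf{y}_{i+1}^\nu$ gives $\|\tilde{\mathbf{y}}-\mathbf{y}_{i+1}^\nu\|\le(\tfrac12\mathbb{E}[\varrho(\xi)])^{-1}\|\tilde{\mathbf{z}}\|$ with $\tilde{\mathbf{z}}\in\partial_y\frac{1}{\nu}\sum_\ell g(t_{i+1},\mathbf{x}_i^\nu,\tilde{\mathbf{y}},\xi_\ell)$, and the subdifferential bound on the compact set containing the iterates yields the claimed estimate, absorbing the $|t_{i+1}|\le\bar T$ term into $\hat\rho$.

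For existence and uniqueness of $\mathbf{x}_{i+1}^\nu$ solving \eqref{OCDE-1-SAA-time}, I would treat $\mathbf{x}\mapsto\mathbf{x}_i^\nu+\frac{h}{\nu}\sum_\ell f(t_{i+1},\mathbf{x},\mathbf{y}_{i+1}^\nu,\xi_\ell)$ as a self-map: by the Lipschitz condition \eqref{Lipschitz}, $\frac{1}{\nu}\sum_\ell f$ is Lipschitz in $\mathbf{x}$ with constant $\frac{1}{\nu}\sum_\ell\kappa_f(\xi_\ell)$, which is $\le 2\mathbb{E}[\kappa_f(\xi)]$ w.p.1 for large $\nu$, so for $h$ small enough ($h\cdot 2\mathbb{E}[\kappa_f(\xi)]<1$) the map is a contraction and the Banach fixed-point theorem gives a unique $\mathbf{x}_{i+1}^\nu$. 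To keep the induction alive I must verify $\mathbf{x}_{i+1}^\nu\in\mathcal{B}(x_0,\delta_1)$: from the fixed-point identity, $\|\mathbf{x}_{i+1}^\nu-x_0\|\le\|\mathbf{x}_i^\nu-x_0\|+h\check\rho_f(1+|t_{i+1}|+\|\mathbf{x}_{i+1}^\nu\|+\|\mathbf{y}_{i+1}^\nu\|)$, and using the linear growth bound on $\mathbf{y}_{i+1}^\nu$ together with a discrete Gronwall inequality I get $\max_i\|\mathbf{x}_i^\nu\|\le(1+\|x_0\|)\exp(C\bar T)-1$ for a constant $C$ depending only on $\check\rho_f$ and $\hat\rho$; hence choosing $\bar T$ (equivalently working on $[0,\min\{T^*,\tilde T\}]$ as in Theorem \ref{convergence-saa}) small enough keeps all iterates inside $\mathcal{B}(x_0,\delta_1)$, closing the induction.

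The main obstacle is the bookkeeping of the two ``w.p.1 for sufficiently large $\nu$'' qualifiers across a step count $N$ that itself grows: I need the strong-convexity constant, the Hoffman constant $\alpha$, the Lipschitz bound on $\frac1\nu\sum_\ell\kappa_f(\xi_\ell)$, and the nonemptiness of $\mathrm{int}\,K^\nu$ to hold \emph{uniformly} over all $(t,x)\in[0,\bar T]\times\mathcal{B}(x_0,\delta_1)$ on a single event of full measure, and this uniformity must not depend on $h$ or $N$. This is precisely the content already extracted in the proofs of Theorems \ref{existence-saa} and \ref{existence-strong-convex}, so the honest statement is that the uniform bounds there are what make the induction's constants ($\hat\rho$, the Gronwall constant, the contraction threshold for $h$) independent of $i$, $N$, and $\nu$; assembling them carefully — rather than any single hard estimate — is where the real work lies.
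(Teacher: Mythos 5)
Your proposal is correct and follows essentially the same route as the paper's proof: uniqueness of $\mathbf{y}_{i+1}^\nu$ from strong convexity of the SAA objective over the nonempty polyhedron, the linear growth bound by repeating the Hoffman-error-bound/strong-monotonicity argument of Theorem \ref{existence-strong-convex}, existence and uniqueness of $\mathbf{x}_{i+1}^\nu$ via the Banach contraction principle once $h$ is below the reciprocal of the (sample-average) Lipschitz constant, and a discrete Gronwall-type estimate giving a bound on the iterates uniform in $i$, $N$ and $\nu$. Your additional bookkeeping --- the explicit induction keeping $\mathbf{x}_i^\nu$ in the ball where $\textrm{int}\,K^\nu\neq\emptyset$ holds w.p.1 and the insistence that all almost-sure constants be uniform in $i$, $N$, $h$ --- is a sound refinement of what the paper states more tersely.
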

\begin{proof}
For any  $i\in\{0,\cdot\cdot\cdot,N-1\}$, $\mathbf{y}^\nu_{i+1}$ is a unique optimal solution of problem \eqref{OCDE-2-SAA-time} w.p.1 for sufficiently large $\nu$.  Similar with the proof of Theorem \ref{existence-strong-convex} and for a fixed $t_i$, there exists $\hat{\rho}_i>0$ such that
\begin{equation}\label{linear-growth-nu-h-i}
\|\mathbf{y}^\nu_{i+1}\|\leq \hat{\rho}_i(1+\|\mathbf{x}^\nu_i\|).
\end{equation}

Following from the Lipschitz property of $f(\cdot,\cdot,\cdot,\xi)$ in \eqref{Lipschitz}, we obtain that for any $\tilde{\mathbf{x}}$ and $\bar{\mathbf{x}}\in \mathbb{R}^n$,
\begin{eqnarray*}
&&\left\|\frac{h}{\nu}\sum_{\ell=1}^\nu f(t_{i+1},\tilde{\mathbf{x}},\mathbf{y}^\nu_{i+1},\xi_\ell)-\frac{h}{\nu}\sum_{\ell=1}^\nu f(t_{i+1},\bar{\mathbf{x}},\mathbf{y}^\nu_{i+1},\xi_\ell)\right\|\\
&&\leq\frac{h}{\nu}\sum_{\ell=1}^\nu \|f(t_{i+1},\tilde{\mathbf{x}},\mathbf{y}^\nu_{i+1},\xi_\ell)-
f(t_{i+1},\bar{\mathbf{x}},\mathbf{y}^\nu_{i+1},\xi_\ell)\|
\le \kappa h\|\tilde{\mathbf{x}}-\bar{\mathbf{x}}\|,
\end{eqnarray*}
where $\kappa \ge \mathbb{E}[\kappa_f(\xi)]\ge \frac{1}{\nu}\sum_{\ell=1}^\nu \kappa_f(\xi_\ell)$ w.p.1 for sufficiently large $\nu$. Therefore, if $h<\frac{1}{\kappa}$, we know that $\frac{h}{\nu}\sum_{\ell=1}^\nu f(t_{i+1},\cdot,\mathbf{y}^\nu_{i+1},\xi_\ell)$ is a contractive mapping.
Moreover, there exists $\tilde{\rho}_f>0$ such that for any $i=0,\cdot\cdot\cdot,N-1$
\begin{eqnarray*}
\|\mathbf{x}^\nu_{i+1}\|\leq \|\mathbf{x}^\nu_{i}\|+\frac{h}{\nu}\sum_{\ell=1}^\nu \|f(t_{i+1},\mathbf{x}^\nu_{i+1},\mathbf{y}^\nu_{i+1},\xi_\ell)\|\leq \|\mathbf{x}^\nu_{i}\|+h \tilde{\rho}_f(1+\|\mathbf{x}^\nu_{i+1}\|).
\end{eqnarray*}
It implies that there exists $0<h_0<\frac{1}{\tilde{\rho}_f}$ such that $\|\mathbf{x}^\nu_{i+1}\|\leq \exp(\frac{\tilde{\rho}_f \bar{T}}{1-h_0\tilde{\rho}_f})(1+\|x_0\|)+1$ for $h\in(0,h_0]$.
The contraction mapping theorem implies that there exists unique $\mathbf{x}_{i+1}^\nu$ such that (\ref{OCDE-1-SAA-time}) holds with $i=0,\cdot\cdot\cdot,N-1$. We then conclude that problem \eqref{OCDE-1-SAA-time}-\eqref{OCDE-2-SAA-time} has a unique solution $\{\mathbf{x}^{\nu}_i,\mathbf{y}^{\nu}_i\}_{i=1}^N$ w.p.1 for sufficiently large $\nu$ and sufficiently small $h$ and the linear growth condition \eqref{linear-growth-nu-h-i} holds  by $\hat{\rho}=\max_{i\in\{1,\cdot\cdot\cdot,N\}}\{\hat{\rho}_i\}$.
\end{proof}

Let $\{\mathbf{x}^{\nu}_i,\mathbf{y}^{\nu}_i\}_{i=1}^N$ be a solution of \eqref{OCDE-1-SAA-time}-\eqref{OCDE-2-SAA-time}. We define a piecewise linear function ${x}^{\nu}_h$ and a piecewise constant function ${y}^{\nu}_h$ on $[0,\bar{T}]$ as below:
\begin{eqnarray}\label{feasible-interpolant}
      {x}^{\nu}_h(t) = \mathbf{x}^{\nu}_{i}+\frac{t-t_{i}}{h}(\mathbf{x}^{\nu}_{i+1}
   -\mathbf{x}^{\nu}_{i}), \quad
  {y}^{\nu}_{h}(t) = \mathbf{y}^{\nu}_{i+1},\,\,\,\,\forall\,\, t\in(t_{i},t_{i+1}].
 \end{eqnarray}

\begin{theorem}\label{convergence-saa-time}
Suppose that Assumption \ref{assumption-convexity} holds and $\textrm{int} K(0,x_0)\neq\emptyset$. Let $(x_h^\nu,y_h^\nu)$ be defined in \eqref{feasible-interpolant} associated with a solution $\{\mathbf{x}^{\nu}_i,\mathbf{y}^{\nu}_i\}_{i=1}^N$ of \eqref{OCDE-1-SAA-time}-\eqref{OCDE-2-SAA-time}. Then there are sequences $\{\nu_k\}$ and $\{h_k\}$ with $\nu_k\rightarrow\infty$ and $h_k \downarrow 0$ as $k\rightarrow\infty$,
%and two functions $x^*\in\mathcal{X}_{\bar{T}}$ and $y^*\in\mathcal{Y}_{\bar{T}}$
such that
\[\lim_{\nu_k \rightarrow \infty } \lim_{h_k\downarrow0} \|x^{\nu_k}_{h_k}-x^*\|_s=0,\,\, w.p.1\]
and
  \[\lim_{\nu_k \rightarrow \infty } \lim_{h_k\downarrow0} \|y^{\nu_k}_{h_k}-y^*\|_{L^2}=0,\,\, w.p.1,\]
where $(x^*,y^*)$ is a  weak solution of \eqref{OCDE-1}-\eqref{OCDE-2} over $[0,\bar{T}]$.
\end{theorem}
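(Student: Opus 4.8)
The plan is to mimic the two-step scheme used in Theorems~\ref{convergence-regularization} and \ref{convergence-saa}, first passing $h_k\downarrow 0$ with $\nu$ fixed to recover a solution of the SAA problem \eqref{OCDE-1-SAA}--\eqref{OCDE-2-SAA}, and then passing $\nu_k\rightarrow\infty$ and invoking Theorem~\ref{convergence-saa}. For the inner limit, fix a large $\nu$ for which Theorem~\ref{existence-saa-time-stepping} applies. First I would establish uniform bounds on the interpolants: the a priori bound $\|\mathbf{x}^\nu_{i+1}\|\leq \exp\!\big(\tfrac{\tilde\rho_f\bar T}{1-h_0\tilde\rho_f}\big)(1+\|x_0\|)+1$ from the proof of Theorem~\ref{existence-saa-time-stepping} gives a bound on $\|x^\nu_h\|_s$ independent of $h$, and then the linear growth condition $\|\mathbf{y}^\nu_{i+1}\|\leq\hat\rho(1+\|\mathbf{x}^\nu_i\|)$ bounds $\|y^\nu_h\|_s$ as well. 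Combined with the finite-difference formula \eqref{OCDE-1-SAA-time}, the slopes $\tfrac{\mathbf{x}^\nu_{i+1}-\mathbf{x}^\nu_i}{h}=\tfrac1\nu\sum_\ell f(t_{i+1},\mathbf{x}^\nu_{i+1},\mathbf{y}^\nu_{i+1},\xi_\ell)$ are uniformly bounded, so $\{x^\nu_h\}$ is equi-Lipschitz. By Arzel\`a--Ascoli there is a subsequence $h_k\downarrow 0$ with $x^{\nu}_{h_k}\rightarrow x^\nu$ uniformly, and by Alaoglu's theorem (as in Theorem~\ref{convergence-regularization}) a further subsequence with $y^{\nu}_{h_k}\rightharpoonup y^\nu$ weakly in $\mathcal Y_{\bar T}$.

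Next I would identify the limit $(x^\nu,y^\nu)$ as a solution of \eqref{OCDE-1-SAA}--\eqref{OCDE-2-SAA}. Passing to the limit in the integrated form $x^\nu_{h_k}(t)=x_0+\int_0^t \tfrac1\nu\sum_\ell f(\lceil\tau\rceil_{h_k},x^\nu_{h_k}(\lceil\tau\rceil_{h_k}),y^\nu_{h_k}(\tau),\xi_\ell)\,d\tau$ uses the Lipschitz property \eqref{Lipschitz}, uniform convergence of $x^\nu_{h_k}$, and the fact that the argument-shift by at most $h_k$ is controlled by $\kappa_f(\xi)h_k$; the weak convergence of $y^\nu_{h_k}$ handles the $y$-dependence since $f$ is affine in $y$ under Assumption~\ref{assumption-convexity} --- here I would note that an affine function of a weakly convergent sequence converges to the affine function of the weak limit. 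Because the objective in \eqref{OCDE-2-SAA-time} is strongly convex (Assumption~\ref{assumption-convexity}), the map $(t,\mathbf x)\mapsto \hat y^\nu(t,\mathbf x)$ is single-valued and continuous (as in Theorem~\ref{existence-strong-convex}), and $\mathbf y^\nu_{i+1}=\hat y^\nu(t_{i+1},\mathbf x^\nu_i)$; since $x^\nu_{h_k}\to x^\nu$ uniformly and $t_{i+1},t_i$ collapse together, the pointwise values $y^\nu_{h_k}(t)\to \hat y^\nu(t,x^\nu(t))$ for each $t$, which upgrades the weak limit to $y^\nu(t)=\hat y^\nu(t,x^\nu(t))$ a.e., and by the dominated convergence theorem (using the uniform bound) also gives $\|y^\nu_{h_k}-y^\nu\|_{L^2}\to 0$. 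Thus $(x^\nu,y^\nu)$ solves \eqref{OCDE-1-SAA}--\eqref{OCDE-2-SAA} with $x^\nu\in C^1$, $y^\nu\in C^0$, and satisfies the linear growth bound \eqref{linear-growth-nu} with a constant $\rho^*$ independent of $\nu$, w.p.1 for large $\nu$.

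Finally, with $(x^\nu,y^\nu)$ in hand for each large $\nu$, I would apply Theorem~\ref{convergence-saa} verbatim: there exist $\bar T$ and a subsequence $\nu_k\to\infty$ with $x^{\nu_k}\to x^*$ uniformly w.p.1 and $y^{\nu_k}\to y^*$ w.p.1 in $\|\cdot\|_{L^2}$, where $(x^*,y^*)$ is a weak solution of \eqref{OCDE-1}--\eqref{OCDE-2} over $[0,\bar T]$. Shrinking $\bar T$ if necessary so that all the earlier existence intervals ($T^*$ from Theorem~\ref{existence-saa}, $\tilde T$ from Theorem~\ref{convergence-saa}, and the time-stepping interval) overlap, the iterated limit $\lim_{\nu_k\to\infty}\lim_{h_k\downarrow 0}$ produces the claimed conclusions.

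The main obstacle I anticipate is the interchange of the two limits and the care needed to keep all constants ($\hat\rho$, $\tilde\rho_f$, $\rho^*$, the existence horizons) uniform in $\nu$ and $h$ simultaneously w.p.1 --- in particular, verifying that the Lipschitz constant $\kappa\geq\frac1\nu\sum_\ell\kappa_f(\xi_\ell)$ and the strong-convexity modulus $\frac1\nu\sum_\ell\varrho(\xi_\ell)$ stay bounded away from the degenerate cases on a single full-measure event, so that the contraction-mapping step and the linear-growth step hold simultaneously for all large $\nu$ and small $h$. The secondary technical point is the rigorous justification that the piecewise-constant $y$-iterate, defined using $\mathbf x^\nu_i$ rather than $\mathbf x^\nu_{i+1}$, still converges to $\hat y^\nu(\cdot,x^\nu(\cdot))$; this is where the continuity of the solution map of the strongly convex program and the collapse $t_{i+1}-t_i=h_k\to 0$ are used together.
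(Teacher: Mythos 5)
Your proposal is correct in substance but takes a genuinely different route from the paper. You argue in two stages: for fixed $\nu$ you send $h\downarrow 0$, use the a priori bounds from Theorem \ref{existence-saa-time-stepping} and Arzel\`a--Ascoli to extract a limit of the interpolants, identify it via the continuity of the strongly convex solution map $\hat y^\nu$ as a $C^1\times C^0$ solution of the continuous-time SAA system \eqref{OCDE-1-SAA}--\eqref{OCDE-2-SAA}, and then invoke Theorem \ref{convergence-saa} verbatim for the outer limit $\nu\to\infty$. The paper instead works directly with the doubly indexed interpolants: it shows $\{x^\nu_h\}$ is equi-Lipschitz with constants uniform in both $h$ and $\nu$, applies Arzel\`a--Ascoli once to get $x^*$, controls $\|y^\nu_h(t)-y^*(t)\|$ by the three-term split through $\hat y^\nu(t_{i+1},\mathbf x^\nu_i)$, $\hat y(t_{i+1},\mathbf x^\nu_i)$ and $\hat y(t,x^\nu_h(t))$ (using the SAA convergence of the solution maps established inside the proof of Theorem \ref{convergence-saa}), and then verifies that $(x^*,y^*)$ is a weak solution of \eqref{OCDE-1}--\eqref{OCDE-2} by estimating the defect $\mathcal{W}^\nu_h$ of the discrete scheme---never constructing an intermediate solution of \eqref{OCDE-1-SAA}--\eqref{OCDE-2-SAA}. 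Your route buys modularity (the outer limit is literally Theorem \ref{convergence-saa}), at the cost of proving an extra convergence result at fixed $\nu$ and of a bookkeeping point you partly anticipate: the Arzel\`a--Ascoli step-size subsequence depends on $\nu$, so the iterated limit must be phrased with $h$-sequences indexed by $\nu_k$ (or a diagonal relabeling), whereas the paper's single-pass argument avoids this by keeping all bounds uniform in $(h,\nu)$ w.p.1.

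One correction: you justify passing to the limit in the $y$-argument of $f$ by saying $f$ is affine in $\mathbf y$ ``under Assumption \ref{assumption-convexity}''; affineness of $f(t,\mathbf x,\cdot,\xi)$ belongs to Assumption \ref{assumption-interior-point}, which is not assumed in this theorem. The slip is harmless in your own argument, because you subsequently upgrade the weak limit of $y^\nu_{h_k}$ to pointwise and hence $L^2$ convergence via the continuity of $\hat y^\nu$ and dominated convergence; with strong convergence in hand, the Lipschitz property \eqref{Lipschitz} alone suffices to pass to the limit in the integral, exactly as in the paper. You should simply drop the affineness remark and rely on that strong convergence.
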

\begin{proof}
According to Theorems \ref{existence-saa-time-stepping}, we get the family of functions $\{{x}^\nu_h(t)\}$ is uniformly bounded on $[0,\bar{T}]$ w.p.1 for sufficiently large $\nu$ and sufficiently small $h$.
Moreover, for any $\nu>0$,
\begin{eqnarray*}\label{xjie}
  \|\mathbf{x}^\nu_{i+1}-\mathbf{x}^\nu_{i}\| \leq h\tilde{\rho}_f(1+\|\mathbf{x}^\nu_{i+1}\|)\leq  h\tilde{\rho}_f\left(2+\exp(\frac{\tilde{\rho}_f \bar{T}}{1-h_0\tilde{\rho}_f})(1+\|x_0\|)\right)\triangleq h \hat{\alpha}.
\end{eqnarray*}
Then for any $t\in[t_{i},t_{i+1}]$, $\tau\in[t_{i+p},t_{i+p+1}]$, $i\in\{0,\cdot\cdot\cdot,N-1 \}$ and $p\in \{-i,1-i,\cdot\cdot\cdot,N-i-1\}$, we have
\begin{eqnarray*}
\begin{aligned}
  \|{x}^\nu_h(\tau)-{x}^\nu_h(t)\| &= \left\|({x}^\nu_h(\tau)-\mathbf{x}^\nu_{i+p})+\sum_{j=1}^{p-1}(\mathbf{x}^\nu_{i+j+1}-\mathbf{x}^\nu_{i+j})
  +(\mathbf{x}^\nu_{i+1}-{x}^\nu_h(t))\right\| &\\
   &\leq(\tau-t_{i+p}+(p-1)h+t_{i+1}-t)\hat{\alpha} = |\tau-t|\hat{\alpha}.&\\
  \end{aligned}
\end{eqnarray*}
It implies that the piecewise interpolant ${x}^\nu_h$ is Lipschitz continuous on $[0,\bar{T}]$ and the Lipschitz constant is independent of $h$ and $\nu$. Hence we obtain that $\{{x}^\nu_h(t)\}$ is equicontinuous. Then according to the Arzel\'{a}-Ascoli theorem, there are sequences $\{h_k\}$ and $\{\nu_k\}$  with $h_k\downarrow 0 $ and $\nu_k\rightarrow\infty$ as $k\rightarrow\infty$ and an $x^*\in\mathcal{X}_{\bar{T}}$ such that
$\lim_{\nu_k \rightarrow \infty } \lim_{h_k\downarrow0} \|x^{\nu_k}_{h_k}-x^*\|_s=0$ w.p.1.

Let $\hat{y}(t,x(t))$ and $\hat{y}^\nu(t,x(t))$ denote the optimal solutions of optimization problems \eqref{OCDE-2} and \eqref{OCDE-2-SAA} with $t$ and $x(t)$, respectively. For any $t\in(t_i,t_{i+1}]$, it is clear that $\hat{y}^\nu(t_{i+1},\mathbf{x}^\nu_{i})=\mathbf{y}_{i+1}^\nu=y_h^\nu(t)$.
Denote $y^*(t)=\hat{y}(t,x^*(t))$. Then for any $t\in(t_i,t_{i+1}]$ with $i\in\{0,\cdot\cdot\cdot,N-1\}$, we have
\begin{eqnarray*}
\begin{aligned}
\|y_h^\nu(t)-y^*(t)\|\leq& \|\hat{y}^\nu(t_{i+1},\mathbf{x}^\nu_{i})-\hat{y}(t_{i+1},\mathbf{x}^\nu_{i})\|
+\|\hat{y}(t_{i+1},\mathbf{x}^\nu_{i})-\hat{y}(t,x_h^\nu(t))\|&\\
&+\|\hat{y}(t,x_h^\nu(t))-\hat{y}(t,x^*(t))\|.
\end{aligned}
\end{eqnarray*}
From the proof of Theorem \ref{convergence-saa}, we obtain that $\hat{y}^\nu(t,x(t))$ converges to $\hat{y}(t,x(t))$ w.p.1 as $\nu\rightarrow\infty$ for any $t\in \mathbb{R}_+$ and $x(t)\in \mathbb{R}^n$.
When $h\downarrow0$, $t\in(t_i,t_{i+1}]$ and $i\in\{0,\cdot\cdot\cdot,N-1\}$, it is easy to obtain $\|\mathbf{x}^\nu_{i+1}-x_h^\nu(t)\|\rightarrow0$ w.p.1 for sufficiently large $\nu$ from \eqref{feasible-interpolant}. Since $\hat{y}$ is continuous,  with $h_k\downarrow 0$ and $\nu_k\rightarrow\infty$  as $k\rightarrow\infty$ such that $\lim_{\nu_k \rightarrow \infty } \lim_{h_k\downarrow0} \|x^{\nu_k}_{h_k}-x^*\|_s=0$ w.p.1,
 we obtain
\[\lim_{\nu_k \rightarrow \infty } \lim_{h_k\downarrow0} \|y^{\nu_k}_{h_k}-y^*\|_{L^2}=0,\,\, \textrm{w.p.1}.\]

Now we show that $(x^*,y^*)$ is a weak solution of problem \eqref{OCDE-1}-\eqref{OCDE-2} over $[0,\bar{T}]$. For $x_h^\nu(0)=x_0$ and any $t\in(0,\bar{T}]$ (without loss of generality, we assume $t\in(t_i,t_{i+1}]$ with some $i\in\{0,\cdot\cdot\cdot,N-1\}$), we have
\begin{eqnarray*}
\begin{aligned}
& \|\mathcal{W}_h^\nu(t)\|\triangleq\left\|x_h^\nu(t)-x_h^\nu(0)-\int_{0}^t \frac{1}{\nu}\sum_{\ell=1}^\nu f(\tau,x_h^\nu(\tau),y_h^\nu(\tau),\xi_\ell)d\tau\right\|&\\
& \leq \frac{1}{\nu}\sum_{\ell=1}^\nu \kappa_f(\xi_\ell)\left(\sum_{j=0}^{i-1} \int_{t_j}^{t_{j+1}} \|\mathbf{x}^\nu_{j+1}- x_h^\nu(\tau)\|d\tau +\int_{t_i}^{t} \|\mathbf{x}^\nu_{i+1}- x_h^\nu(\tau)\|d\tau \right. &\\
&\left.+\frac{(i+1)h^2}{2}-\frac{(t_{i+1}-t)^2}{2}\right) \leq \frac{h}{\nu}\sum_{\ell=1}^\nu \kappa_f(\xi_\ell)\left(\frac{1}{2}\sum_{j=0}^{i-1}\|\mathbf{x}^\nu_{j+1}- \mathbf{x}^\nu_{j}\|+\|\mathbf{x}^\nu_{i+1}- \mathbf{x}^\nu_{i}\|+\frac{\bar{T}}{2}\right) &\\
&\leq \frac{h (1+\hat{\alpha}) \bar{T}}{\nu}\sum_{\ell=1}^\nu \kappa_f(\xi_\ell),
\end{aligned}
\end{eqnarray*}
where $\kappa_f(\xi)$ is the Lipschitz constant of $f(\cdot,\cdot,\xi)$.
Therefore, we conclude that for any $t\in(0,\bar{T}]$ and two sequences $\{h_k\}$ and $\{\nu_k\}$ with $h_k \downarrow 0$ and $\nu_k \rightarrow\infty$  as $k\rightarrow\infty$,
\[\lim_{\nu_k\rightarrow\infty}\lim_{h_k\downarrow0}\|\mathcal{W}_{h_k}^{\nu_k}\|_s=0,\,\,\, \textrm{w.p.1}.\]

 Obviously,
 \begin{eqnarray*}
\begin{aligned}
&\sup_{t\in[0,\bar{T}]}\left\| x^*(t)-x_0-\int_{0}^t \mathbb{E}[f(\tau,x^*(\tau),y^*(\tau),\xi)]d\tau\right\|&\\
&\leq \lim_{k\rightarrow\infty}\left(\sup_{t\in[0,\bar{T}]}\left\| x^*(t)-x_0-\int_{0}^t \mathbb{E}[f(\tau,x^*(\tau),y^*(\tau),\xi)]d\tau- \mathcal{W}_{h_k}^{\nu_k}(t)\right\|+\|\mathcal{W}_{h_k}^{\nu_k}\|_s\right)&\\
&\leq \lim_{k\rightarrow\infty} \left( \left(1+\frac{\bar{T}}{\nu_k}\sum_{\ell=1}^{\nu_k}\kappa_f(\xi_\ell)\right)\|x^*-x_{h_k}^{\nu_k}\|_s+
\left\| \mathbb{E}[f(\cdot,x^*,y^*,\xi)]
-\frac{1}{\nu_k}\sum_{\ell=1}^{\nu_k}f(\cdot,x^*,y^*,\xi_\ell)\right\|_s \right.&\\
& \left.+\frac{\sqrt{\bar{T}}}{\nu_k}\sum_{\ell=1}^{\nu_k}\kappa_f(\xi_\ell)\|y_{h_k}^{\nu_k}-y^*\|_{L^2}
+\|\mathcal{W}_{h_k}^{\nu_k}\|_s\right)=0,
\end{aligned}
\end{eqnarray*}
which implies that $(x^*,y^*)$ is a weak solution of \eqref{OCDE-1}-\eqref{OCDE-2} over $[0,\bar{T}]$.
\end{proof}

For any fixed $i\in \{1,\ldots, N\}$, solving problem \eqref{OCDE-1-SAA-time}-\eqref{OCDE-2-SAA-time} should address two issues: the nonsmooth fixed point problem and nonsmooth convex optimization problem. For the nonsmooth convex optimization problem \eqref{OCDE-2-SAA-time}, we can adopt the well-known existing algorithms such as proximal schemes. To solve  the nonsmooth fixed point problem \eqref{OCDE-1-SAA-time}, we can adopt the EDIIS algorithm \cite{chen-tim-2019,chen2022} which is a modified Anderson acceleration. The Anderson acceleration is designed to solve the fixed point problem when computing the Jacobian of the function in the problem is impossible or too costly \cite{bianchen2022}. We have known that  $\frac{h}{\nu}\sum_{\ell=1}^\nu f(t_{i+1},\cdot,\mathbf{y}^\nu_{i+1},\xi_\ell)$ is a contractive mapping w.p.1 for sufficiently large $\nu$ and sufficiently small $h$. Then following from \cite[Theorem 2.1]{chen-tim-2019}, we can obtain that the sequence $\{\mathbf{x}^{(\nu, k)}_{i+1}\}$ generated by the EDIIS algorithm converges to the unique solution $\mathbf{x}^\nu_{i+1}$ of \eqref{OCDE-1-SAA-time} as the iteration step $k\rightarrow\infty$.

\section{Numerical experiment}\label{se:numerical-exam}
In this section, we verify our theoretical results by a numerical example, which is performed in MATLAB 2017b on a Lenovo laptop (2.60GHz, 32.0GB RAM).

\textit{Example 6.1.} We consider the following problem:
\begin{equation}\label{example}
\begin{aligned}
& \dot{x}(t)=\mathbb{E}\left[\left(
           \begin{array}{cc}
             \xi_1 & 2\\
             \xi_1^2 & \xi_2 \\
           \end{array}
         \right){x}(t)+\left(
           \begin{array}{ccc}
             2x_1(t) & x_2(t) & \xi_2 \\
             2t & 0 & \xi_1x_1(t) \\
           \end{array}
         \right)y(t)\right],&\\
& y(t)\in \arg\min_{\mathbf{y}\in \mathbb{R}^3}\mathbb{E}[ \|M(\xi)\mathbf{y}-b(x(t),\xi)\|^2+\|\mathbf{y}\|_1] &\\
&\quad\quad\quad \textrm{s.t.} \,\,\mathbf{y}\in K(t,x(t))=\{\mathbf{y} :\mathbb{E}[A(\xi)]x(t)+\mathbb{E}[B(\xi)]\mathbf{y}+\mathbb{E}[Q(t,\xi)]\leq 0\},
\end{aligned}
\end{equation}
where $x(t)=(x_1(t), x_2(t))^\top$, $x(0)=x_0=(-1,-2)^\top$,
\begin{equation*}
\begin{aligned}
& M(\xi)=\left( \begin{array}{ccc}
  2+\xi_1 & 0 & -\xi_2  \\
  0 & \xi_1+\xi_2 & -1  \\
  \end{array}\right), \quad  \quad b(x(t),\xi)
       =\left(\begin{array}{c}
         x_1(t) +\xi_2 \\
         \xi_1 x_2(t)\\
       \end{array}\right),\\
 &   A(\xi)=\left( \begin{array}{cc}
  -2-\xi_1 & 1  \\
  -1 & \xi_2  \\
  \end{array}\right),\,\, B(\xi)=\left(
   \begin{array}{ccc}
   1 & \xi_1^2 & \xi_2 \\
   \xi_2 & 0 & 2 \\
   \end{array}
   \right),\,\, Q(t,\xi)=\left( \begin{array}{c}
            t-\xi_1 \\
               \xi_2 \\
             \end{array}
                \right).
\end{aligned}
\end{equation*}
We set the terminal time $T=1$, $\xi_1\sim {\cal{N}}(1,0.01)$ and $\xi_2\sim
{\cal{U}}(-1,1)$. It can be verified easily that all functions in this example fulfill our settings in the beginning of this paper.
It is obvious that the objective function $\mathbb{E}[g(t,x(t),\cdot)]$ in \eqref{example} is convex and $\mathbb{E}[g(0,x_0,\mathbf{y})]\geq \|\mathbf{y}\|_1$, which means that $\mathbb{E}[g(0,x_0,\cdot)]$ is level-coercive and then is level-bounded, following from \cite[Corollary 3.27]{Rockafellar}.
We can also obtain that $ K(0,x_0)=\{(\mathbf{y}_1,\mathbf{y}_2,\mathbf{y}_3):\mathbf{y}_1+1.01\mathbf{y}_2\leq0, 2\mathbf{y}_3+1\leq0\}$ and then $\textrm{int} K(0,x_0)\neq\emptyset$. Hence, we know that Assumption \ref{assumption-interior-point} holds for this example.

Now we illustrate that this example exists a solution on $[0,1]$. Following from the proof of Theorem \ref{existence-weak}, the solution existing interval mainly depends on the range of $(t,\mathbf{x})$ which is such that $\textrm{int} K(t,\mathbf{x})\neq\emptyset$ and $\mathbb{E}[g(t,\mathbf{x},\cdot)]$ is level-bounded.
By $ K(t,\mathbf{x})=\{(\mathbf{y}_1,\mathbf{y}_2,\mathbf{y}_3):\mathbf{y}_1+1.01\mathbf{y}_2\leq3\mathbf{x}_1-\mathbf{x}_2-t+1, 2\mathbf{y}_3\leq -\mathbf{x}_1\}$, we know that for any $\mathbf{x}=(\mathbf{x}_1,\mathbf{x}_2)$ there always holds that $\textrm{int} K(t,\mathbf{x})\neq\emptyset$ because two linear constraints are independent of each other. For any $(t,\mathbf{x})$, we can also have $\mathbb{E}[g(t,\mathbf{x},\mathbf{y})]\geq \|\mathbf{y}\|_1$, which means that $\mathbb{E}[g(t,\mathbf{x},\cdot)]$ is also level-coercive and then is level-bounded. Hence, we know that the optimal set $\mathcal{S}(t,\mathbf{x})$ of the optimization problem in \eqref{example} is nonempty and bounded for any $(t,\mathbf{x})$, and is upper semicontinuous. It then derives that \eqref{example} has at least a weak solution on $[0,1]$ from Theorem \ref{existence-weak}.

We add  a regularization term $\mu\|\mathbf{y}\|^2$ to the objective function in \eqref{example}.
For the regularization numerical form of \eqref{example}, we use the EDIIS(1) method to solve the fixed point problem and the Matlab toolbox CVX to obtain the optimal solution of the convex optimization problem. The EDIIS(1) method is used in the numerical example of \cite{chen2022} since the minimization problem in EDIIS has a closed-form solution in this case. The stop criterion of EDIIS(1) for each $i$ is $\|\mathbf{x}^{(\nu,k+1)}-\mathbf{x}^{(\nu,k)}\|\leq10^{-6}$.

In the numerical experiments, let $\hat{x}=(\hat{x}_1,\hat{x}_2)$ be a numerical solution of the ODE in \eqref{example} with regularization parameter $\mu=10^{-5}$, sample size $\nu=5000$ and step size $h=10^{-4}$. For the fixed step size $h=10^{-4}$, we carry out tests with the regularization parameter $\mu=10^{-4}$, $0.001$, $0.01$ and $0.1$, and the sample size $\nu=3000$, $2000$, $1000$ and $500$. We compute the numerical solution $x^{\mu,\nu}=(x_1^{\mu,\nu},x_2^{\mu,\nu})$ and
\[R_1=\frac{1}{10000}\sum_{i=1}^{10000}|\hat{x}_1(ih)-x_1^{\mu,\nu}(ih)|,\,\,
R_2=\frac{1}{10000}\sum_{i=1}^{10000}|\hat{x}_2(ih)-x_2^{\mu,\nu}(ih)|\]
50 times and averages them. The decreasing tendencies of $R_1$ and $R_2$ as $\nu$ increases and $\mu$ decreases are shown in FIG. \ref{figure0numericalex}.

\begin{figure}[htbp]
  \centerline{
\subfigure{\psfig{figure=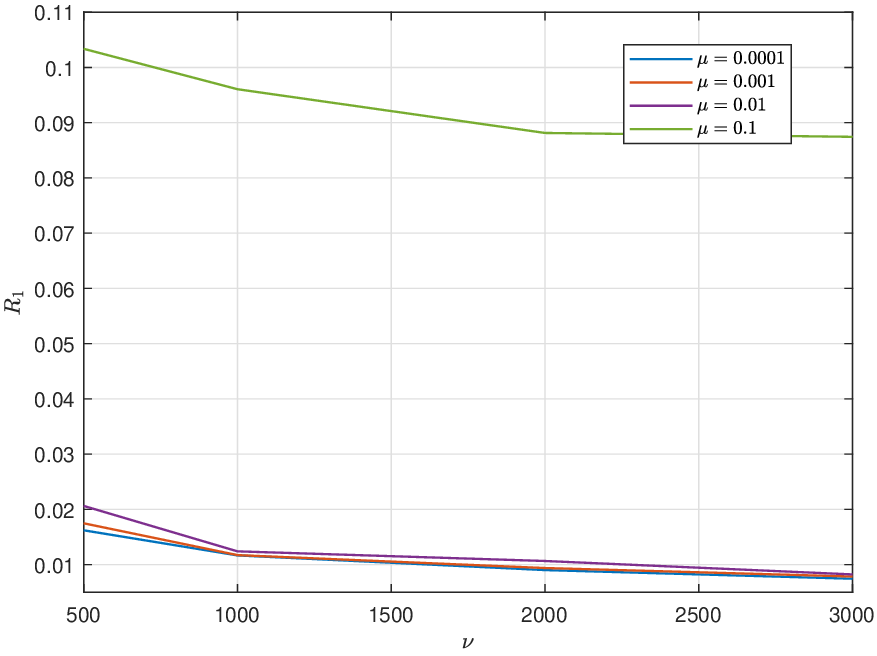,width=0.45\textwidth}}
\subfigure{\psfig{figure=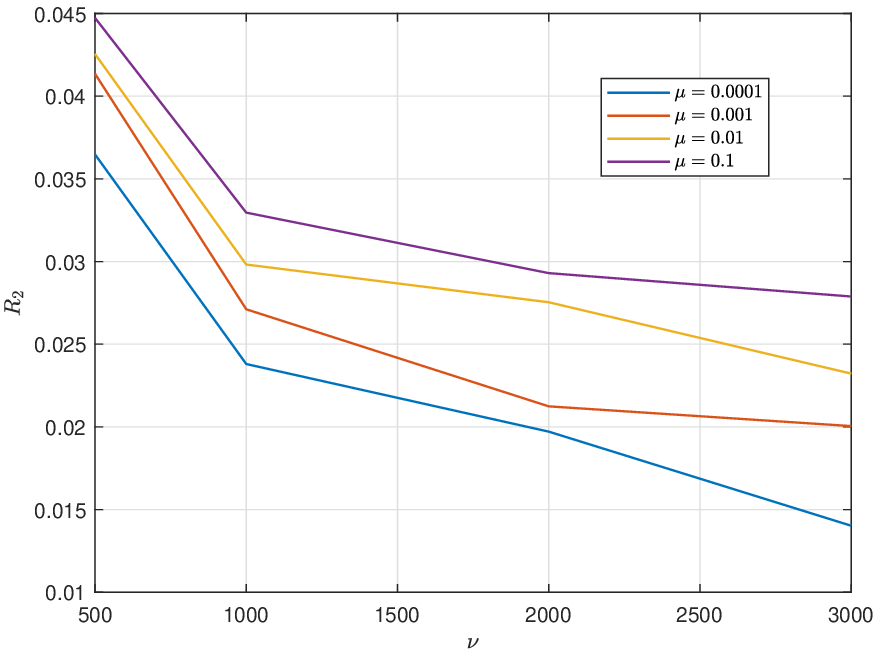,width=0.45\textwidth}}
}\caption{ The decreasing tendencies of $R_1$ and $R_2$ as $\nu$ increases and $\mu$ decreases.}\label{figure0numericalex}
\end{figure}

\section{Application in time-varying parameter estimation for an ODE}\label{se:application}

In this section, we apply \eqref{OCDE-1}-\eqref{OCDE-2} to estimate the time-varying parameter for an ODE. Several strategies can be employed to estimate the time-varying parameters for an ODE based on noisy data, such as the local polynomial method \cite{chen-wu-2008}, the nonlinear least squares method \cite{lin-ying-2001} and the spline-based method \cite{liang-wu-2008}. As a Bayesian approach, Gaussian process is also widely used to infer dynamics of ODE (see \cite{yangpans2021} and the references therein).
A Gaussian process can be viewed as a distribution over functions, while its inference takes place directly in the function space. It is a collection of random variables, any finite number of which have a joint Gaussian distribution. It is also a non-parametric probabilistic model for function estimation that is widely used in tasks such as regression and classification. Therefore, we  use Gaussian process to infer the dynamics of an ODE based on noisy data in order to estimate its time-varying parameter.

A system of ODE with initial value $x(0)=x_0$ takes the form
\begin{equation}\label{GP-ODE-example}
\dot{x}(t)=f_1(x(t))y(t)+f_2(x(t)),\,\,\,t\in[0,T],
\end{equation}
where $f_1:\mathbb{R}^{n}\rightarrow\mathbb{R}^{n\times m}$, $f_2:\mathbb{R}^{n}\rightarrow\mathbb{R}^{n}$ are given functions and $y(t)\in \mathbb{R}^{m}$ is unknown.
It is well known that a Gaussian process is completely characterized by a mean function and a covariance or a kernel function.

We assume that we can observe the values of states and their derivatives at the given time points $\{t_i\}_{i=1}^{N}$. If the observation data of derivatives is not available, we can estimate the derivatives by the first-order method, that is $\dot{x}_j(t_i)\approx \frac{x(t_{i+1})-x(t_i)}{t_{i+1}-t_i}$.
Let $Y_i=[Y^1_i,\cdot\cdot\cdot,Y_i^{n}]^\top$ be the measurement of true value of state variable $x$ at time $t_i$, that is $Y^j_i=x_j(t_i)+\epsilon_{j}$ for $j=1,\cdot\cdot\cdot,n$, where $\epsilon_{j}$ denotes the measurement error. We also let $Z^j_i=\dot{x}_j(t_i)+\epsilon^d_{j}$ for given derivatives observation or $Z^j_i=\frac{Y^j_{i+1}-Y^j_{i}}{t_{i+1}-t_i}$, where $\epsilon^d_{j}$ denotes the measurement error. The errors $\epsilon_{j}$ and $\epsilon^d_{j}$ are assumed to follow a Gaussian distribution with zero mean and variance $\sigma_j^2$ and $\hat{\sigma}_j^2$, respectively.

We employ the Gaussian processes to obtain the distributions of the state variables and their derivatives, denoted as $\hat{x}(t,\xi)$ and $\dot{\hat{x}}(t,\xi)$, where $\xi$ denotes a random variable. It should be noticed that $\hat{x}(t,\xi)$ and $\dot{\hat{x}}(t,\xi)$ have no closed forms but can obtain their values at any given $t$.
By \cite{ODIN-2020}, the $n$-dimensional variable of $\hat{x}(t,\xi)$ and $\dot{\hat{x}}(t,\xi)$ can be obtained by stacking $n$ independent Gaussian processes to model each state and the derivative independently.

 Therefore, we can estimate the time-varying coefficients $y(t)$ by solving the following optimization problem
\begin{equation}\label{GP-example}
\begin{aligned}
y(t)\in \arg\min_{\mathbf{y}\in \mathbb{K}} \mathbb{E}\left[\|\dot{\hat{x}}(t,\xi)-f_1(x(t))\mathbf{y}-f_2(x(t))\|_1
+\|\hat{x}(t,\xi)-x(t)\|_1\right],
\end{aligned}
\end{equation}
where $x(t)$ fulfills \eqref{GP-ODE-example}, and $\mathbb{K}$ is a nonempty closed convex set which can be some inaccurate information for the coefficients such as upper or lower bounds. Obviously, the objective function in \eqref{GP-example} is nonsmooth and convex in $\mathbf{y}$. Note that the objective function is not strongly convex in $\mathbf{y}$, then we introduce the regularization method into it.
By using the regularization method with parameter $\mu$, SAA with sample size $\nu$ and time-stepping method with step size $h$, we obtain the following discrete form of \eqref{GP-ODE-example}-\eqref{GP-example}:
\begin{eqnarray}
& &\mathbf{x}_{i+1}=\mathbf{x}_{i}+h(f_1(\mathbf{x}_{i+1})\mathbf{y}_{i+1}+f_2(\mathbf{x}_{i+1})), \label{GP-ODE-SAA-time}\\
& &\begin{aligned}
&\mathbf{y}_{i+1}= \arg\min_{\mathbf{y}\in\mathbb{K}} \frac{1}{\nu}\sum_{\ell=1}^\nu \|\dot{\hat{x}}(t_{i+1},\xi_\ell)-f_1(\mathbf{x}_i)\mathbf{y}-f_2(\mathbf{x}_i)\|_1+\mu\|\mathbf{y}\|^2. \label{GP-SAA-time}&\\
 \end{aligned}
\end{eqnarray}
It should be noted that $\dot{\hat{x}}(t_{i+1},\xi_\ell)$ does not need any information of $\mathbf{x}_{i}$ and $\mathbf{x}_{i+1}$ in \eqref{GP-SAA-time}, as it is obtained by the Gaussian process based on the observation data independently.
As we mentioned before, we adopt EDIIS algorithm to solve the fixed point problem \eqref{GP-ODE-SAA-time}. For the nonsmooth convex optimization problem \eqref{GP-SAA-time}, we use the CVX tool box to solve it. At last, we can obtain the approximation solution of \eqref{GP-ODE-example}-\eqref{GP-example} and the estimation of time-varying coefficients.

\textit{Example 6.2.}
For the following ODE with time-varying coefficients,
\begin{equation}\label{ex-ode}
\begin{aligned}
&\dot{x}_1(t)=x_1(t)+\sin(t)x_1(t), &\\
&\dot{x}_2(t)=x_1(t)-2tx_2(t), \,\,\,t\in[0,5],
\end{aligned}
\end{equation}
where $x_1(0)=1$ and $x_2(0)=0$.
Let $t_i=0.04i$, $i=0,\cdot\cdot\cdot,125$. Obviously, for any given $t_i$, we can obtain the values $x(t_i)$ and then $\dot{x}(t_i)$. We estimate the parameters $(\sin(t),-2t)$ of \eqref{ex-ode} under two cases: (i) both the noisy data of $Y_i={x}(t_i)+\epsilon$ and $Z_i=\dot{x}(t_i)+\epsilon$ are given, where $\epsilon\sim {\cal{N}}(0,0.4)$; (ii) only the noisy data $Y_i$ is given.

We estimate the time-varying parameters of \eqref{ex-ode} by solving the problem \eqref{ex-ode} with an optimization problem \eqref{GP-example}, where we estimate the parameters with the set $\mathbb{K}=\{(\mathbf{y}_1,\mathbf{y}_2):-1\leq \mathbf{y}_1\leq 1, -10\leq \mathbf{y}_2\leq 0\}$.
When we adopt the regularization approach, SAA method and the time-stepping method, we set the regularization parameter $\mu=10^{-4}$, the sample size $\nu=1000$ and step size $h=0.001$. For problem \eqref{GP-ODE-SAA-time}, we also adopt EDIIS(1), where the stop criterion for each $i$ is $\|\mathbf{x}^{(k+1)}_i-\mathbf{x}^{(k)}_i\|\leq10^{-6}$.
We obtain the estimation of parameters by averaging 50 independent repetitions.
The visualization of estimates of parameters $(\sin(t),-2t)$ in \eqref{ex-ode} for the two cases are shown in FIGs. \ref{figure1} and \ref{figure2}. Let $y(t)$ and $\tilde{y}(t)$ denote the true functions and their estimations, respectively. In the figures, the dash lines denote the $95\%$ simultaneous $l_\infty$ credible bands, where the radius is estimated by the $95\%$ quantile of $\|y-\tilde{y}\|_s\triangleq \max_{i}|y(t_i)-\tilde{y}(t_i)|$. The shaded area denotes the estimation area between the $25\%$ and $75\%$ quantiles of 50 independent repetitions.

\begin{figure}[htbp]
  \centerline{
\subfigure{\psfig{figure=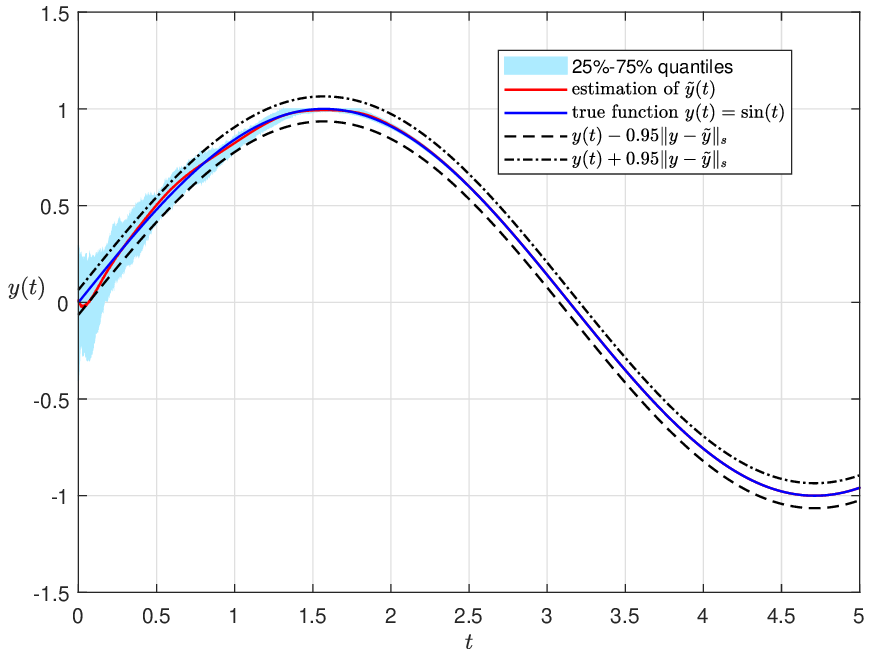,width=0.45\textwidth}}
\subfigure{\psfig{figure=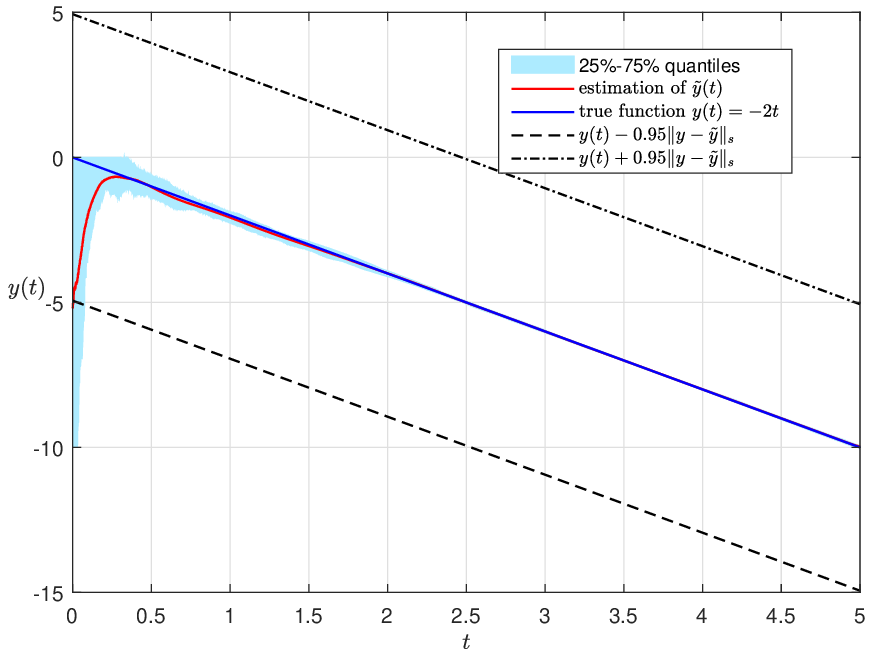,width=0.45\textwidth}}
}\caption{ Visualization of estimates of parameters $(\sin(t),-2t)$ in \eqref{ex-ode} under case (i).}\label{figure1}
\end{figure}

\begin{figure}[htbp]
  \centerline{
\subfigure{\psfig{figure=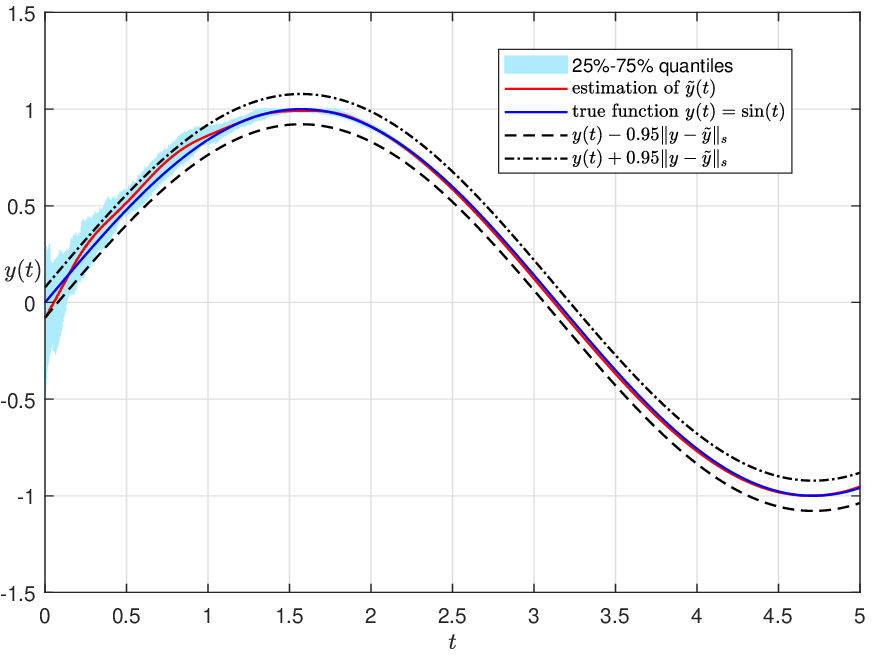,width=0.45\textwidth}}
\subfigure{\psfig{figure=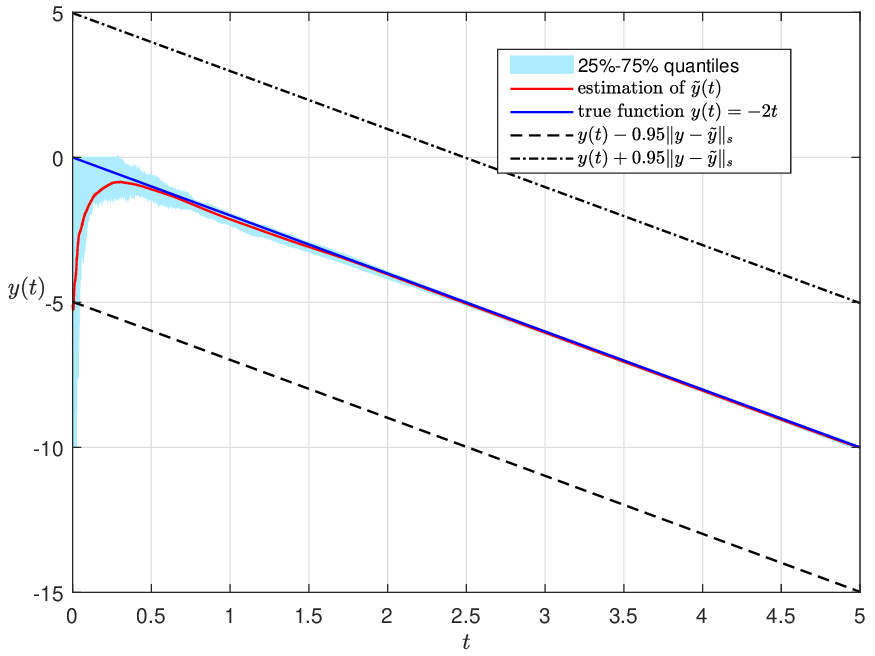,width=0.45\textwidth}}
}\caption{ Visualization of estimates of parameters $(\sin(t),-2t)$ in \eqref{ex-ode} under case (ii).}\label{figure2}
\end{figure}

Under the case (i), for the fixed step size $h=0.001$, we also carry out tests with the regularization parameter $\mu=0.001$, $0.01$, $0.1$ and $1$, and the sample size $\nu=1000$, $500$, $100$ and $50$. We compute the numerical solution $x^{\mu,\nu}=(x_1^{\mu,\nu},x_2^{\mu,\nu})$ and
\[R_3=\frac{1}{5000}\sqrt{\sum_{i=1}^{5000}({x}^*_1(ih)-x_1^{\mu,\nu}(ih))^2},\,\,
R_4=\frac{1}{5000}\sqrt{\sum_{i=1}^{5000}({x}^*_2(ih)-x_2^{\mu,\nu}(ih))^2}\]
50 times and averages them, where $(x_1^*(t),x_2^*(t))$ is the true solution of problem \eqref{ex-ode}
\[x_1^*(t)=e^{t-\cos(t)+1},\,\,\,\, x_2^*(t)=e^{1-t^2}\int_0^t{e^{-\tau^2-\cos(\tau)+\tau}}d\tau.\]
  The decreasing tendencies of $R_3$ and $R_4$ as $\nu$ increases and $\mu$ decreases are shown in FIG. \ref{figure3}.

\begin{figure}[htbp]
  \centerline{
\subfigure{\psfig{figure=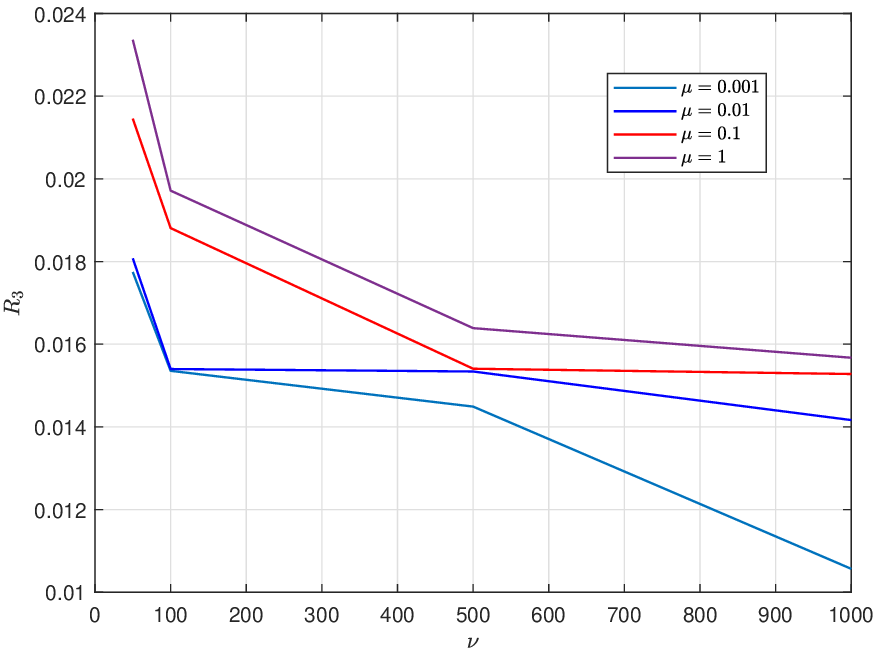,width=0.45\textwidth}}
\subfigure{\psfig{figure=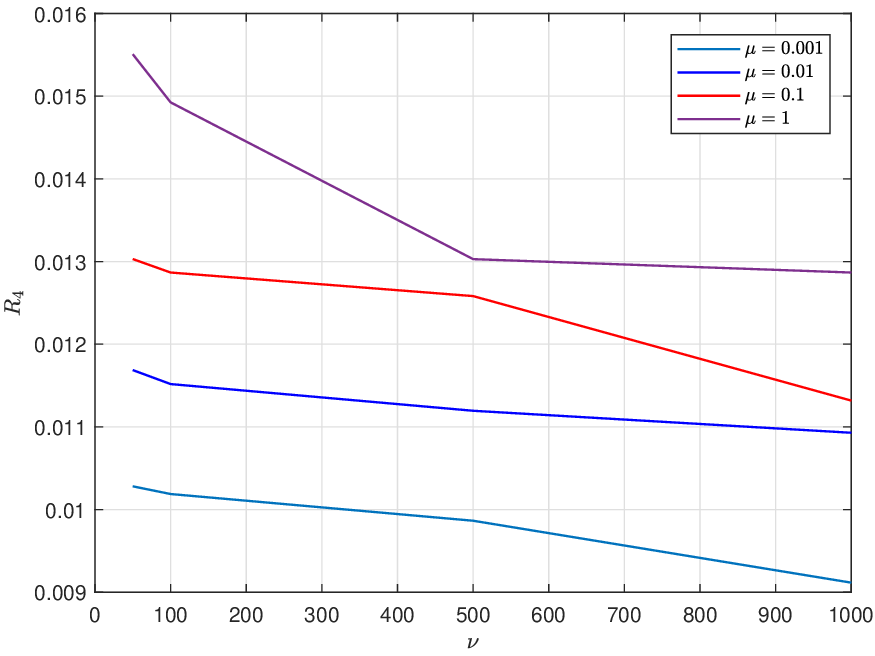,width=0.45\textwidth}}
}\caption{ The decreasing tendencies of $R_3$ and $R_4$ as $\nu$ increases and $\mu$ decreases for \eqref{ex-ode} under case (i).}\label{figure3}
\end{figure}

From FIGs \ref{figure1} and \ref{figure2}, we can observe that our model \eqref{GP-ODE-example}-\eqref{GP-example} can be applied to approximate the time-varying parameters in an ODE system \eqref{ex-ode}, which means the potential application of (\ref{OCDE-1})-(\ref{OCDE-2}) in estimating the time-varying parameters in ODE system. FIG \ref{figure3} also verifies the theoretical results for our numerical methods proposed by this paper.

\section{Conclusions}\label{se:conclusions}

In this paper, we show the existence of weak solutions of the dynamic system coupled with solutions of stochastic nonsmooth convex optimization problem (\ref{OCDE-1})-(\ref{OCDE-2}).  By adding a regularization term $\mu\|\mathbf{y}\|^2$ to the convex objective function in (\ref{OCDE-2}), the convex optimization problem becomes a strongly convex problem, which has a unique continuous optimal solution. We show that the unique optimal solution of nonsmooth optimization with strong convexity admits a linear growth condition and the regularized dynamic system has a classic solution. Moreover,  we prove that the solutions of regularized problem converge to the solutions of original problem as the regularization parameter goes to zero.
Moreover, we show that the unique optimal solution of the regularized optimization problem (\ref{OCDE-2-regularization})  converges to the least-norm optimal solution of the original problem (\ref{OCDE-2}). We adopt the sample average approximation scheme and implicit Euler method to discretize the  dynamic system coupled with solutions of stochastic nonsmooth strongly convex optimization problem  and present the corresponding convergence analysis. We give a numerical example to demonstrate our theoretical results. Finally, the effectiveness of our model is  verified by an example of the estimation of the time-varying parameters in ODE.\\

{\bf Acknowledgement}  We would like to thank the Associate Editor and two referees
for their very helpful comments.

\bibliographystyle{siamplain}
\bibliography{references}

\end{document}